\documentclass[11pt]{amsart}
\usepackage{latexsym,amscd,amssymb, graphicx, color, amsthm, bm, amsmath, cancel, enumitem}  
\usepackage{tikz}
\usepackage{tikz-cd}
\usepackage[margin=1in]{geometry}
\usepackage{hyperref}
\usepackage[all,cmtip]{xy}
\usepackage{ytableau}

\numberwithin{equation}{section}

\newtheorem{theorem}{Theorem}[section]
\newtheorem{proposition}[theorem]{Proposition}
\newtheorem{corollary}[theorem]{Corollary}
\newtheorem{lemma}[theorem]{Lemma}

\newtheorem{question}[theorem]{Question}

\newtheorem{example}[theorem]{Example}
\newtheorem{remark}[theorem]{Remark}
\newtheorem{definition}[theorem]{Definition}

\theoremstyle{definition}

\newcommand{\Hilb}{{\mathrm{Hilb}}}

\newcommand{\symm}{{\mathfrak{S}}}

\newcommand{\gr}{{\mathrm {gr}}}
\newcommand{\Gr}{{\mathrm {Gr}}}
\newcommand{\BBB}{{\mathcal{B}}}

\newcommand{\AAA}{{\mathcal{A}}}

\newcommand{\CCC}{{\mathcal{C}}}

\newcommand{\CC}{{\mathbb{C}}}
\newcommand{\QQ}{{\mathbb{Q}}}
\newcommand{\ZZ}{{\mathbb{Z}}}
\newcommand{\PP}{{\mathbb{P}}}

\newcommand{\xxx}{{\mathbf{x}}}

\newcommand{\ttt}{{\mathbf{t}}}
\newcommand{\uuu}{{\mathbf{u}}}

\newcommand{\sss}{{\mathbf{s}}}

\newcommand{\aalpha}{{{\bm \alpha}}}

\newcommand{\im}{\text{im}}

\newcommand{\edge}{\text{edge}}

\newcommand{\Fl}{\text{Fl}}
\DeclareMathOperator{\Spec}{Spec}

\begin{document}

\title[Torus equivariant cohomology for the $\Delta$-Springer Fiber]
{Torus Equivariant Cohomology for the $\Delta$-Springer Fiber}

\author[Raymond Chou]{Raymond Chou}
\email{r2chou@ucsd.edu}

\begin{abstract}
We define a torus $U \subset T = (\CC^\times)^K$ which acts on the $\Delta$-Springer varieties $Y_{n,\lambda,s}$ defined by Griffin-Levinson-Woo \cite{GLW} and give a Borel-style presentation for the equivariant cohomology ring $H^*_U(Y_{n,\lambda,s})$. Our presentation arises from the orbit harmonics deformation technique, and uses methods of Chou-Matsumura-Rhoades \cite{CMR} and Abe-Horiguchi \cite{AH}.
\end{abstract}

\maketitle

\section{Introduction}\label{sec:introduction}

A central theme in Schubert calculus is to describe cohomology rings of naturally
occurring varieties by explicit generators and relations \cite{AH,AF,CMR,HaradaHoriguchiMasuda2015}. Borel's presentation of
the cohomology ring of the complete flag variety $\Fl_n(\CC^n)$ as the coinvariant algebra $R_n$ is the foundational example \cite{Borel1953}, and analogous presentations for Springer fibers and their
generalizations have played a central role in geometric representation theory and
algebraic combinatorics. Equivariant cohomology enhances these rings by accounting for the action of a torus, and equivariant Borel-style presentations often reveal structure
that is invisible in ordinary cohomology.

The varieties studied in this paper are the \emph{$\Delta$-Springer fibers}
$Y_{n,\lambda,s}$ introduced by Griffin-Levinson-Woo \cite{GLW}. These are
subvarieties of partial flag varieties depending on a partition
$\lambda \vdash k \leq n$ and an integer $s \geq \ell(\lambda)$. Their ordinary
cohomology is described in \cite{GLW} in terms of the $\Delta$-Springer modules
$R_{n,\lambda,s}$ introduced by Griffin \cite{G}, which simultaneously generalize
the Garsia-Procesi modules \cite{GarsiaProcesi1992} and the generalized coinvariant rings $R_{n,k}$ of Haglund-Rhoades-Shimozono \cite{HRS}. Thus the geometry of $Y_{n,\lambda,s}$ lies at the intersection of Springer theory and the algebraic combinatorics surrounding Delta-type coinvariant constructions.

In the classical Springer fiber setting, Kumar-Procesi \cite{KumarProcesi2012} gave a coordinate ring presentation of the torus-equivariant cohomology of the Springer fiber. Abe-Horiguchi \cite{AH} upgraded this to a Borel presentation with explicit generators and relations by extending the argument of Tanisaki \cite{T} for ordinary cohomology. The generalized coinvariant rings $R_{n,k}$ were realized as the cohomology ring of the moduli space of \emph{spanning line configurations} defined by Pawlowski-Rhoades \cite{PawlowskiRhoades2019}, whose equivariant cohomology was computed by Chou-Matsumura-Rhoades \cite{CMR} as a Borel-style presentation. The varieties $X_{n,k}$ are smooth but not compact; Griffin-Levinson-Woo \cite{GLW} give an alternate geometric model as a special case of the varieties $Y_{n,\lambda,s}$, which are compact yet singular.

The goal of this paper is to establish the analogous Borel presentation for the torus-equivariant cohomology for the full
$\Delta$-Springer family. To do so, we define a natural subtorus
\[
U \subset T = (\CC^\times)^K
\]
which preserves $Y_{n,\lambda,s}$, and we compute the equivariant cohomology ring
$H_U^*(Y_{n,\lambda,s})$ explicitly.

Our presentation takes the form of a quotient of the polynomial ring
$\QQ[\xxx_n,\uuu_s]$. The defining ideal is generated by two families of
relations. The first family,
\[
(x_i-u_1)\cdots(x_i-u_s) \qquad (1 \leq i \leq n),
\]
arises from the iterated projective bundle structure of $Y_{n,\emptyset,s}$.
The second family is given by relations we call \emph{double Tanisaki polynomials} in subsets of the
variables $x_1,\dots,x_n$, and is obtained from equivariant Schubert classes on
Grassmannians. Writing $J_{n,\lambda,s}$ for the ideal generated by these two
families, our main result identifies
\[
H_U^*(Y_{n,\lambda,s}) \cong \QQ[\xxx_n,\uuu_s]/J_{n,\lambda,s}.
\]
After specializing $u_1=\cdots=u_s=0$, this presentation recovers the
Griffin-Levinson-Woo presentation of the ordinary cohomology ring
$H^*(Y_{n,\lambda,s})$. In this sense, the quotient
$\QQ[\xxx_n,\uuu_s]/J_{n,\lambda,s}$ may be viewed as an equivariant deformation
of the $\Delta$-Springer module $R_{n,\lambda,s}$.

A second theme of the paper is that the shape of this equivariant presentation is
predicted by the phenomenon known as \emph{orbit harmonics}. Griffin's ordinary presentation is governed by a
finite point locus whose associated graded ideal yields $R_{n,\lambda,s}$.
We show that the equivariant quotient arises from a certain universal point arrangement in
the variables $\xxx_n$ and $\uuu_s$. More precisely, we study a linear subspace
arrangement
\[
X_{n,\lambda,s} \subset \QQ^{n+s}
\]
and prove that its vanishing ideal is generated by the same two families of
relations that appear in equivariant cohomology. This gives a conceptual
explanation for the appearance of the parameters $u_1,\dots,u_s$, and it produces
a free $\QQ[\uuu_s]$-basis indexed by certain $(n,\lambda,s)$-substaircase monomials.

The proof of the main theorem proceeds in three stages. First, using the affine
paving of $Y_{n,\lambda,s}$ due to Griffin-Levinson-Woo, we establish the
equivariant formality and freeness statements needed throughout the paper.
We then study the iterated projective bundle $Y_{n,\emptyset,s}$ to obtain the
first family of relations and a surjection onto $H_U^*(Y_{n,\lambda,s})$.
Second, following the strategy of Abe-Horiguchi, we map $Y_{n,\lambda,s}$ to
Grassmannians, identify its image inside a suitable Schubert variety, and use
vanishing of disjoint Schubert classes to obtain the second family of relations.
Finally, we compare the resulting quotient with the orbit-harmonics arrangement
$X_{n,\lambda,s}$. Both rings are free $\QQ[\uuu_s]$-modules of the same rank,
and the surjection is therefore an isomorphism.

Along the way, we also construct an action of $\symm_n$ on
$H_U^*(Y_{n,\lambda,s})$ by lifting the $\symm_n$-action on ordinary cohomology
constructed in \cite{GLW}. This equivariant lift allows us to pass from relations
in initial segments of the variables to relations for arbitrary subsets, just as
in the classical Springer-theoretic setting.

The paper is organized as follows. In Section~\ref{sec:background} we review the
necessary combinatorics, equivariant cohomology, equivariant Chern classes, and
orbit-harmonics background. Section~\ref{sec:orbitharmonics} studies the universal
arrangement $X_{n,\lambda,s}$ and proves the freeness results needed for the rank
comparison. Section~\ref{sec:geometry} establishes the $U$-action on
$Y_{n,\lambda,s}$ and proves the equivariant Borel presentation.

\subsection{Acknowledgements}

The author would like to thank, in no particular order, Sean Griffin, Brendon Rhoades, and Tomoo Matsumura for helpful conversations.

\section{Background}\label{sec:background}

\subsection{Combinatorics}

A \emph{partition} of $n$ is a sequence of positive integers $\lambda = (\lambda_1,\dots,\lambda_\ell)$ such that $\lambda_1 \geq \dots \geq \lambda_\ell$, and $\sum_i \lambda_i = n$. We express partitions with \emph{Young diagrams}, with $\ell$ rows of left-justified boxes, with the $i$th row containing $\lambda_i$ boxes. We will use \emph{English notation}, and so the rows will be arranged in weakly decreasing length from top to bottom. A \emph{filling} $P$ of a Young diagram is an assignment of nonnegative integers to each box of $\lambda$. 

A \emph{composition} of length $n$ is a tuple $(a_1,\dots,a_n) \in \ZZ_{\geq 0}^n$ of $n$ nonnegative integers. 

\subsection{Equivariant cohomology, localization, and equivariant Chern classes}

Computing equivariant cohomology is a central theme in Schubert calculus; see, for example, \cite{AF,GKM}. For this paper, we consider cohomology rings with $\QQ$-coefficients, i.e.
$H^*(X)=H^*(X;\QQ)$ for all spaces $X$. Let $T=(\CC^\times)^k$ be a complex
algebraic torus, and let $K=(S^1)^k\subset T$ be its maximal compact subgroup.
Since the inclusion $K\hookrightarrow T$ is a homotopy equivalence, we may
identify $T$-equivariant cohomology with the usual Borel equivariant cohomology
for the $K$-action. Thus, if $X$ is a space equipped with a $T$-action, we define
\[
H_T^*(X):=H^*(EK\times_K X).
\]
In particular,
\[
H_T^*(pt)=H^*(BT)\cong H^*(BK)\cong \QQ[t_1,\dots,t_k],
\qquad \deg t_i=2.
\]

Let $X$ be a complex algebraic variety. An \emph{affine paving} of $X$ is a
filtration by closed subvarieties
\[
\varnothing = X_0 \subset X_1 \subset \dots \subset X_m=X
\]
such that $X_i\setminus X_{i-1}$ is a finite disjoint union of affine spaces for
all $i$.

We will need the following standard lemma, which is a consequence of
\cite[Theorem~14.1]{GKM}.

\begin{lemma}\label{lem: aff-paving-induces-basis}
    Let $X$ be a complex algebraic variety which admits an algebraic action of a complex torus $T=(\CC^\times)^k$. Furthermore, suppose that $X$ has a $T$-invariant affine paving.
    Then $H^{\mathrm{odd}}(X)=0$. Consequently, $X$ is equivariantly formal, and
    there is a noncanonical isomorphism of graded $H_T^*(pt)$-modules
    \[
        H_T^*(X) \cong H_T^*(pt)\otimes_{\QQ} H^*(X).
    \]
    In particular, $H_T^*(X)$ is a free $H_T^*(pt)$-module of rank
    $\dim_{\QQ} H^*(X)$, and any additive basis of $H^*(X)$ lifts to an
    $H_T^*(pt)$-module basis of $H_T^*(X)$.
\end{lemma}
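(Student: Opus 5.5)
The plan has three parts: compute $H^*(X)$ from the paving, deduce equivariant formality from the Serre spectral sequence, and then use Leray–Hirsch to get the module statements.

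\textbf{Step 1: ordinary cohomology from the paving.} Induct on $i$ to show $H^{\mathrm{odd}}(X_i)=0$ and $\dim H^*(X_i)$ equals the number of cells in $X_i$. Write $X_i\setminus X_{i-1}=\bigsqcup_j \CC^{d_j}$. Since $X_{i-1}$ is closed, there is a long exact sequence in compactly supported cohomology,
\[
\cdots\to H^m_c(X_i\setminus X_{i-1})\to H^m_c(X_i)\to H^m_c(X_{i-1})\to H^{m+1}_c(X_i\setminus X_{i-1})\to\cdots .
\]
Here $H^*_c(\CC^{d})$ is $\QQ$, concentrated in degree $2d$. By induction both outer terms vanish in odd degrees. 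Hence the connecting maps vanish, the sequence splits into short exact sequences, and $H^*_c(X_i)$ is concentrated in even degrees with the expected dimension. This gives $H^{\mathrm{odd}}_c(X)=0$. For $X$ compact, as in all applications in the paper, $H^*_c=H^*$. In general one should run the argument in Borel–Moore homology, or invoke the statement of \cite[Theorem~14.1]{GKM} directly, which is exactly this vanishing.

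\textbf{Step 2: equivariant formality.} Consider the Serre spectral sequence of the fibration $X\to EK\times_K X\to BK$. Since $BK$ is simply connected, its $E_2$ page is $H^*(BK)\otimes H^*(X)$. Both factors live in even degrees by Step 1, so $E_2^{p,q}$ is nonzero only for $p,q$ even. Every differential $d_r$ has bidegree $(r,1-r)$ and so changes total degree by one. It therefore maps even total degree to odd total degree, which is zero, and the sequence degenerates at $E_2$. This degeneration is the statement that $X$ is equivariantly formal, and it is equivalent to surjectivity of the restriction $H_T^*(X)\to H^*(X)$ to the fiber.

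\textbf{Step 3: the module structure.} Choose a basis of $H^*(X)$ and lift each element to $H_T^*(X)$ using the surjectivity from Step 2. By the Leray–Hirsch theorem, or equivalently by the degenerate spectral sequence together with a graded Nakayama argument, the lifts form a free $H_T^*(pt)$-basis. This gives the non-canonical isomorphism $H_T^*(X)\cong H_T^*(pt)\otimes H^*(X)$. It also shows that the rank is $\dim H^*(X)$ and that any additive basis lifts.

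\textbf{Main obstacle.} The delicate point is Step 1 when $X$ is not compact. Then the paving controls $H^*_c$, or Borel–Moore homology, rather than $H^*$ directly. I would handle this by citing \cite{GKM} for the general case and only doing the inductive argument in the compact case. The $T$-invariance of the paving is not needed for the odd-vanishing itself. It is needed for the cited GKM framework, where the cells give equivariant classes.
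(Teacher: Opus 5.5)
Your proof is correct for compact $X$, and it takes a different, more self-contained route than the paper. The paper gives no argument and simply attributes the lemma to \cite[Theorem~14.1]{GKM}. You get odd vanishing from the long exact sequence in $H_c^*$, equivariant formality from parity degeneration of the Serre spectral sequence of $X\to EK\times_K X\to BK$, and freeness plus lifting of bases from Leray--Hirsch. This makes explicit that compactness enters only through $H_c^*=H^*$, and that $T$-stability of the paving plays no role in the vanishing. Every use of the lemma in the paper is for a compact variety: $Y_{n,\lambda,s}$, $Y_{n,\emptyset,s}$, $\Fl_{(1^n)}(\CC^K)$, and $X,Y$ in Lemma~\ref{lem: rel-paving-equiv-surj}. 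So your argument covers everything the paper needs.

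The non-compact case you flag cannot be handled by Borel--Moore homology or by citation, because the statement is false there. A paving controls $H^{BM}_*(X)\cong H_c^*(X)^\vee$. Without compactness or smoothness (where Poincar\'e duality $H^i(X)\cong H_c^{2\dim X-i}(X)^\vee$ would suffice), nothing links this to $H^*(X)$.

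For a counterexample, let $S=\PP^1\times\PP^1\setminus\{(\infty,\infty)\}$ and $C=(\{\infty\}\times\CC)\sqcup(\CC\times\{\infty\})$. Then $C$ is closed in $S$ and $S\setminus C\cong\CC^2$. Glue two copies of $S$ along $C$; one algebraic model is $\{t(t-\sigma)=0\}$ in the total space of $\mathcal O(1,1)|_S$ with $\sigma=x_0y_0$. Then $\varnothing\subset C\subset X$ is a $(\CC^\times)^2$-stable affine paving with $X\setminus C\cong\CC^2\sqcup\CC^2$. Yet $H^1(S)=0$, so Mayer--Vietoris gives $H^1(X)\cong\operatorname{coker}\bigl(H^0(S)^{\oplus 2}\to H^0(C)\bigr)\cong\QQ$.

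So you should state the lemma with a compactness (or smoothness) hypothesis rather than defer the general case to \cite{GKM}.
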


Affine pavings can also be used to show surjectivity results on maps between
cohomology rings. First, we recall an important commutative algebra fact.

\begin{lemma}[Graded Nakayama]\label{lem: graded-nakayama}
    Let $R = \bigoplus_{d \geq 0} R_d$ be a positively graded $\QQ$-algebra
    with $R_0 = \QQ$, and let
    \[
        R_+ := \bigoplus_{d > 0} R_d.
    \]
    Let $M$ be a finitely generated graded $R$-module, and let
    $N \subseteq M$ be a graded submodule. If
    \[
        M = N + R_+M,
    \]
    then $M = N$. In particular, if $M = R_+M$, then $M = 0$.
\end{lemma}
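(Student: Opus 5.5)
We prove the graded Nakayama lemma by a lowest-degree argument. The idea is to pass to the quotient and show it vanishes. Set $Q := M/N$. This is a finitely generated graded $R$-module, since it is a quotient of the finitely generated graded module $M$ by the graded submodule $N$. The hypothesis $M = N + R_+M$ says that every element of $M$ is congruent modulo $N$ to an element of $R_+M$. Passing to the quotient, this gives $Q = R_+Q$. So the first assertion reduces to the second, applied to $Q$, and it suffices to prove: if $Q$ is a finitely generated graded $R$-module with $Q = R_+Q$, then $Q = 0$.

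For this, suppose $Q \neq 0$ and choose finitely many homogeneous generators $q_1,\dots,q_r$ of $Q$. We may take them homogeneous, because the homogeneous components of any finite generating set again generate. Let $d$ be the minimum of their degrees. Every homogeneous element of $Q$ is then an $R$-combination of the $q_i$. Since $R$ is nonnegatively graded, every nonzero homogeneous element of $Q$ has degree at least $d$, so $Q_e = 0$ for $e < d$.

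Now take any $q \in Q_d$. Because $Q = R_+Q$, we can write $q = \sum_j r_j m_j$ with $r_j \in R_+$ and $m_j \in Q$. Expanding each $r_j$ and $m_j$ into homogeneous components and keeping only the terms of total degree $d$, we may assume each $r_j$ is homogeneous of positive degree $a_j$ and each $m_j$ is homogeneous of degree $d - a_j < d$. Then each $m_j$ lies in $Q_{d-a_j} = 0$, so $q = 0$. Hence $Q_d = 0$.

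This contradicts the choice of $d$. Indeed, some generator $q_i$ has degree exactly $d$, so it lies in $Q_d = 0$. By discarding such generators and repeating the argument, we strictly increase the minimal degree while keeping a generating set. Since there are only finitely many generators, eventually none remain, so $Q = 0$. Equivalently, one can argue by induction on $e$ that $Q_e = 0$ for all $e$: the same computation shows $Q_e$ is spanned by $R_+$-multiples of lower-degree components, which vanish inductively. Finite generation is what supplies the lower bound $Q_e = 0$ for $e \ll 0$ that starts the induction. This gives $Q = 0$, i.e. $M = N$, and the special case $N = 0$ gives the second statement.

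The only delicate point is the degree bookkeeping: extracting the degree-$d$ part of the expression $q = \sum_j r_j m_j$ and using that $R_+$ has strictly positive degree. This is where the hypotheses that $R$ is positively graded with $R_0 = \QQ$ and that $M$ is finitely generated (hence bounded below in degree) are used. The same argument works verbatim if $M$ is only assumed to be graded and bounded below.
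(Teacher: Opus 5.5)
Your proof is correct and is the paper's argument: pass to $Q = M/N$, observe $Q = R_+Q$, and show the lowest nonzero graded piece vanishes because $(R_+Q)_r = \sum_{a>0} R_a Q_{r-a}$. Your extra bookkeeping with homogeneous generators only spells out the boundedness below that the paper asserts in one line. The ``contradiction'' step needs the generators to be chosen nonzero, but your induction-on-degree variant avoids that issue anyway.
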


\begin{proof}
    Consider the graded quotient $\overline{M} := M/N$. Then $\overline{M}$ is
    a finitely generated graded $R$-module satisfying
    \[
        \overline{M} = R_+\overline{M}.
    \]
    Suppose $\overline{M} \neq 0$. Since $\overline{M}$ is finitely generated
    and $R$ is positively graded, there exists a smallest degree $r$ such that
    $\overline{M}_r \neq 0$. But every element of $R_+$ has positive degree, so
    \[
        \overline{M}_r = (R_+\overline{M})_r
        = \sum_{d>0} R_d\,\overline{M}_{r-d} = 0,
    \]
    since $\overline{M}_{r-d} = 0$ for all $d>0$ by minimality of $r$.
    This is a contradiction. Therefore $\overline{M} = 0$, so $M = N$.
\end{proof}

\begin{corollary}\label{cor: graded-nakayama-surj}
    Let $\varphi: M \to N$ be a graded homomorphism of finitely generated
    graded $R$-modules. If the induced map
    \[
        \overline{\varphi}: M/R_+M \longrightarrow N/R_+N
    \]
    is surjective, then $\varphi$ is surjective.
\end{corollary}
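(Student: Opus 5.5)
The plan is to apply the graded Nakayama lemma (Lemma~\ref{lem: graded-nakayama}) to the image of $\varphi$ inside $N$. Put
\[
N' := \varphi(M) \subseteq N.
\]
Since $\varphi$ is a graded homomorphism, $N'$ is a graded $R$-submodule of $N$. It is spanned by the images of homogeneous elements, and $\varphi$ preserves degree.

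Next, translate the hypothesis. The quotient map $N \to N/R_+N$ composed with $\varphi$ factors through $M/R_+M$, and its image is $(N' + R_+N)/R_+N$. Saying that $\overline{\varphi}$ is surjective therefore means exactly
\[
N = N' + R_+N.
\]
To check this directly: given $n \in N$, surjectivity of $\overline{\varphi}$ gives some $m \in M$ with $n - \varphi(m) \in R_+N$.

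Now $N$ is a finitely generated graded $R$-module and $N'$ is a graded submodule with $N = N' + R_+N$. Lemma~\ref{lem: graded-nakayama} then gives $N = N'$, so $\varphi$ is surjective.

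The only points that need care are that $N'$ is graded, which holds because $\varphi$ is graded, and that finite generation is required of $N$, where it is assumed. Finite generation of $M$ is not actually used, and I expect no real obstacle.
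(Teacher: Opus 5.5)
Your proof is correct and is essentially the paper's argument. The paper applies the ``in particular'' case of Lemma~\ref{lem: graded-nakayama} to the cokernel $Q=N/\varphi(M)$, using $Q/R_+Q=0$; you apply the submodule form of the same lemma to $\varphi(M)\subseteq N$, and since that lemma's proof passes to exactly this quotient, the two are the same (your remark that finite generation of $M$ is never used is also right).
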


\begin{proof}
    Let $Q := \operatorname{coker}(\varphi)$. Then $Q$ is a finitely generated
    graded $R$-module, and the surjectivity of $\overline{\varphi}$ implies
    \[
        Q/R_+Q = 0.
    \]
    By Lemma~\ref{lem: graded-nakayama}, we have $Q = 0$. Hence $\varphi$ is
    surjective.
\end{proof}

\begin{remark}\label{rem: graded-nakayama-application}
    In this paper, we will apply Lemma~\ref{lem: graded-nakayama} and
    Corollary~\ref{cor: graded-nakayama-surj} with
    \[
        R = H_U^*(pt) \cong \QQ[u_1,\dots,u_s]
        \qquad\text{and}\qquad
        R_+ = (u_1,\dots,u_s).
    \]
\end{remark}

\begin{lemma}\label{lem: rel-paving-surj}\cite{GLW}
    Suppose $X$ is a smooth compact complex algebraic variety and
    $Y\subseteq X$ is a closed subvariety. If $Y$ and $X\setminus Y$ have affine
    pavings, then the restriction map
    \[
        H^*(X)\to H^*(Y)
    \]
    induced by the inclusion $Y\hookrightarrow X$ is surjective.
\end{lemma}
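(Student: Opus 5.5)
Since $X$ is compact and $Y \subseteq X$ is closed, the open complement $X\setminus Y$ is a smooth (noncompact) variety, and $Y$ is compact. The plan is to use the long exact sequence of the pair, reinterpreted via compactly supported cohomology, and to use the affine pavings to force all odd-degree groups to vanish; the connecting maps then become zero for parity reasons.

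Concretely, because $Y$ is a closed subvariety of the compact space $X$, there is a long exact sequence in compactly supported cohomology
\[
\cdots \to H^i_c(X\setminus Y) \to H^i_c(X) \to H^i_c(Y) \to H^{i+1}_c(X\setminus Y) \to \cdots,
\]
and since $X$ and $Y$ are compact, $H^i_c(X)=H^i(X)$ and $H^i_c(Y)=H^i(Y)$. The middle map is exactly the restriction along $Y\hookrightarrow X$. So it suffices to show that $H^{\mathrm{odd}}_c(X\setminus Y)=0$. Then for even $i$, the map $H^i(Y)\to H^{i+1}_c(X\setminus Y)=0$ vanishes, so restriction is surjective in even degrees. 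In odd degrees we use $H^{\mathrm{odd}}(Y)=0$, which follows from the affine paving of $Y$, either by Lemma~\ref{lem: aff-paving-induces-basis} or directly by the argument below. Hence restriction is surjective in every degree.

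The key step is the standard fact that a variety $Z$ with an affine paving $\varnothing=Z_0\subset\cdots\subset Z_m=Z$ has $H^{\mathrm{odd}}_c(Z)=0$. We prove it by induction on $j$, using the sequence for the closed subset $Z_{j-1}\subset Z_j$ with open complement $Z_j\setminus Z_{j-1}$:
\[
H^i_c(Z_j\setminus Z_{j-1})\to H^i_c(Z_j)\to H^i_c(Z_{j-1}).
\]
The complement $Z_j\setminus Z_{j-1}$ is a disjoint union of affine spaces $\CC^d$, and $H^*_c(\CC^d)=\QQ$ concentrated in degree $2d$. The outer terms therefore vanish in odd degrees, so the middle term does as well. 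Since $Y$ is compact, this gives $H^{\mathrm{odd}}(Y)=0$, and applied to $Z=X\setminus Y$ it gives $H^{\mathrm{odd}}_c(X\setminus Y)=0$.

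The main point needing care is that the pavings are filtrations by closed subvarieties, which is what makes these sequences exact. This is where the definition given before Lemma~\ref{lem: aff-paving-induces-basis} is used. For $X\setminus Y$, which is noncompact, we must keep working with compactly supported cohomology throughout rather than ordinary cohomology. Smoothness of $X$ is not really needed for this argument. It would only enter if one preferred to pass to Borel–Moore homology via Poincaré duality, $H^i(X)\cong H^{BM}_{2\dim X-i}(X)$, which is an equivalent route.
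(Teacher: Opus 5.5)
Your proof is correct. The paper does not prove this lemma itself; it is quoted from \cite{GLW} with no argument. So the useful comparison is with the standard proof of this fact, which works on the homology side. There, the pavings kill the odd Borel--Moore homology of $Y$, $X$ and $X\setminus Y$, so the sequence $\cdots\to H^{BM}_{i}(Y)\to H^{BM}_{i}(X)\to H^{BM}_{i}(X\setminus Y)\to\cdots$ shows that the pushforward $H_*(Y)\to H_*(X)$ is injective. Since $X$ and $Y$ are compact and coefficients are in $\QQ$, dualizing then gives surjectivity of $H^*(X)\to H^*(Y)$.

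Your argument is the cohomological mirror of this. You run the same parity argument in the compactly supported cohomology sequence of the pair, so you land directly on the restriction map with no universal-coefficients dualization step. It also shows exactly which inputs are used: compactness of $X$ (so that $H^*_c=H^*$ for $X$ and $Y$), $H^{\mathrm{odd}}(Y)=0$, and $H^{\mathrm{odd}}_c(X\setminus Y)=0$. This confirms your remark that smoothness of $X$ plays no role.

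The homology route has the advantage of keeping explicit bases in view, since cell-closure classes of $Y$ map to cell-closure classes of $X$. Yours gives, as a bonus, short exact sequences $0\to H^{2i}_c(X\setminus Y)\to H^{2i}(X)\to H^{2i}(Y)\to 0$. These also yield $H^{\mathrm{odd}}(X)=0$ and additivity of Betti numbers.
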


We will use the following equivariant enhancement.

\begin{lemma}\label{lem: rel-paving-equiv-surj}
    Let $X$ be a smooth compact complex algebraic variety, and let
    $Y\subseteq X$ be a closed $T$-stable subvariety, where
    $T=(\CC^\times)^k$. If $Y$ and $X\setminus Y$ admit $T$-stable affine
    pavings, then the restriction map
    \[
        H_T^*(X)\to H_T^*(Y)
    \]
    is surjective.
\end{lemma}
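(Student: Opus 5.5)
The plan is to reduce to the non-equivariant statement, Lemma~\ref{lem: rel-paving-surj}, using equivariant formality (Lemma~\ref{lem: aff-paving-induces-basis}) together with graded Nakayama in the form of Corollary~\ref{cor: graded-nakayama-surj}.

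First, we verify the hypotheses of Lemma~\ref{lem: aff-paving-induces-basis} for both $X$ and $Y$. For $Y$ this is immediate. For $X$, we splice the two given pavings: if $\varnothing=Y_0\subset\cdots\subset Y_m=Y$ and $\varnothing=Z_0\subset\cdots\subset Z_p=X\setminus Y$ are the $T$-stable pavings, with each $Z_j$ closed in $X\setminus Y$, then
\[
Y_0\subset\cdots\subset Y_m=Y\subset Y\cup Z_1\subset\cdots\subset Y\cup Z_p=X
\]
is a $T$-stable filtration of $X$ by closed subvarieties. Each $Y\cup Z_j$ is closed in $X$, since its complement is $(X\setminus Y)\setminus Z_j$, which is open in the open set $X\setminus Y$. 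The successive differences are $Z_j\setminus Z_{j-1}$, which are unions of affine spaces. Hence both $X$ and $Y$ are equivariantly formal, and $H_T^*(X)$ and $H_T^*(Y)$ are free $H_T^*(pt)$-modules of finite rank. In particular they are finitely generated graded modules over $R=H_T^*(pt)=\QQ[t_1,\dots,t_k]$, with $R_+=(t_1,\dots,t_k)$.

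Next, the restriction map $\varphi:H_T^*(X)\to H_T^*(Y)$ is a graded $R$-module homomorphism, since it is induced by the $T$-equivariant inclusion and is therefore a map of $H_T^*(pt)$-algebras. By equivariant formality, the forgetful map $H_T^*(Z)\to H^*(Z)$ is surjective with kernel $R_+H_T^*(Z)$ for $Z=X,Y$. This is the standard consequence of freeness plus degeneration of the Serre spectral sequence of $Z\to ET\times_T Z\to BT$: we have $H_T^*(Z)\otimes_R\QQ\cong H^*(Z)$ via pullback to the fiber. Naturality of pullback along the fiber inclusion gives a commutative square, which identifies
\[
\overline{\varphi}:H_T^*(X)/R_+H_T^*(X)\to H_T^*(Y)/R_+H_T^*(Y)
\]
with the ordinary restriction $H^*(X)\to H^*(Y)$. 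That map is surjective by Lemma~\ref{lem: rel-paving-surj}, since $X$ is smooth and compact and $Y$, $X\setminus Y$ are paved.

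Finally, Corollary~\ref{cor: graded-nakayama-surj} shows that $\varphi$ is surjective. It applies here because the modules are finitely generated, $R$ is positively graded with $R_0=\QQ$, and cohomological degrees are nonnegative.

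The main obstacle is the identification $H_T^*(Z)/R_+H_T^*(Z)\cong H^*(Z)$, compatibly with restriction. I would handle it by citing the form of \cite[Theorem~14.1]{GKM} underlying Lemma~\ref{lem: aff-paving-induces-basis}: equivariant formality is equivalent to surjectivity of $H_T^*(Z)\to H^*(Z)$ with kernel generated by $R_+$, once freeness is known. A small point is that $Y$ is possibly singular, but Lemma~\ref{lem: aff-paving-induces-basis} does not require smoothness, so this causes no difficulty.
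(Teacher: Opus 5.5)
Your proposal is correct and follows essentially the same route as the paper. Both arguments use equivariant formality of $X$ and $Y$ coming from the spliced $T$-stable paving, identify the reduction modulo $R_+$ with the ordinary restriction $H^*(X)\to H^*(Y)$ (surjective by Lemma~\ref{lem: rel-paving-surj}), and conclude by graded Nakayama via Corollary~\ref{cor: graded-nakayama-surj}; your explicit splicing of the two pavings and your Leray--Hirsch justification of $H_T^*(Z)/R_+H_T^*(Z)\cong H^*(Z)$ fill in details that the paper only asserts.
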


\begin{proof}
    Write $R:=H_T^*(pt)=\QQ[t_1,\dots,t_k]$ and
    $R_+:=\bigoplus_{d>0} H_T^{2d}(pt)$. Since $Y$ and $X\setminus Y$ have
    $T$-stable affine pavings, so does $X$. Hence $H^{\mathrm{odd}}(X)=0$ and
    $H^{\mathrm{odd}}(Y)=0$, so both $X$ and $Y$ are equivariantly formal by
    Lemma~\ref{lem: aff-paving-induces-basis}. In particular, $H_T^*(X)$ and
    $H_T^*(Y)$ are finitely generated free graded $R$-modules.

    By equivariant formality, the forgetful maps induce natural isomorphisms
    \[
        H_T^*(X)/R_+H_T^*(X)\cong H^*(X),
        \qquad
        H_T^*(Y)/R_+H_T^*(Y)\cong H^*(Y).
    \]
    Under these identifications, the map $H_T^*(X)\to H_T^*(Y)$ reduces modulo
    $R_+$ to the ordinary restriction map $H^*(X)\to H^*(Y)$, which is
    surjective by Lemma~\ref{lem: rel-paving-surj}. Since $H_T^*(Y)$ is a
    finitely generated graded $R$-module, graded Nakayama (Corollary \ref{cor: graded-nakayama-surj}) implies that
    $H_T^*(X)\to H_T^*(Y)$ is surjective.
\end{proof}

Now let $X$ be a paracompact Hausdorff space with a continuous $T$-action. A
\emph{$T$-equivariant complex vector bundle} on $X$ is a complex vector bundle
$E\to X$ together with a $T$-action on $E$ such that the projection map
$E\to X$ is $T$-equivariant and each $t\in T$ acts linearly on the fibers.
Applying the Borel construction produces a complex vector bundle
\[
E_T:=EK\times_K E \longrightarrow EK\times_K X=:X_T.
\]
The \emph{$i$th equivariant Chern class} of $E$ is defined by
\[
    c_i^T(E):=c_i(E_T)\in H^{2i}(X_T)=H_T^{2i}(X).
\]
These classes satisfy the same formal properties as ordinary Chern classes:
they are functorial under equivariant pullback, and for every short exact
sequence of $T$-equivariant vector bundles
\[
0\to E'\to E\to E''\to 0
\]
we have the Whitney sum formula
\[
    c^T(E)=c^T(E')\,c^T(E'').
\]
If $L$ and $M$ are $T$-equivariant line bundles, then
\[
    c_1^T(L\otimes M)=c_1^T(L)+c_1^T(M).
\]

For a character $\chi:T\to \CC^\times$, let $\CC_\chi$ denote the corresponding
one-dimensional representation of $T$. Then $\CC_\chi$ determines a
$T$-equivariant line bundle over a point, and its first equivariant Chern class
lies in $H_T^2(pt)$. If $\varepsilon_1,\dots,\varepsilon_k$ are the standard
characters of $(\CC^\times)^k$, we write
\[
    t_i:=c_1^T(\CC_{\varepsilon_i})\in H_T^2(pt),
\]
so that $H_T^*(pt)\cong \QQ[t_1,\dots,t_k]$. More generally, if a
$T$-representation $V$ decomposes as
\[
V\cong \CC_{\chi_1}\oplus \cdots \oplus \CC_{\chi_r},
\]
then
\[
    c^T(V)=\prod_{j=1}^r \bigl(1+c_1^T(\CC_{\chi_j})\bigr).
\]

We will also use the equivariant projective bundle formula. Let $E\to X$ be a
$T$-equivariant vector bundle of rank $r$, let $\pi:\PP(E)\to X$ be the
projective bundle of lines in $E$, and let $\mathcal O_E(-1)\subset \pi^*E$ be
the tautological line bundle. Setting
\[
x:=c_1^T(\mathcal O_E(-1)),
\]
we have
\[
H_T^*(\PP(E))
\cong
\frac{H_T^*(X)[x]}
{\left(c_r^T(E)-c_{r-1}^T(E)x+\cdots+(-1)^r x^r\right)}.
\]
Equivalently, if $\zeta:=c_1^T(\mathcal O_E(1))=-x$, then
\[
H_T^*(\PP(E))
\cong
\frac{H_T^*(X)[\zeta]}
{\left(\zeta^r-c_1^T(E)\zeta^{r-1}+\cdots+(-1)^r c_r^T(E)\right)}.
\]

Finally, if $Z\subseteq X$ is a $T$-stable closed subvariety of equivariant
codimension $d$, we write
\[
[Z]_T\in H_T^{2d}(X)
\]
for its equivariant fundamental class. We will use that equivariant Schubert classes are supported on the corresponding Schubert varieties; if $i: Z \hookrightarrow X$ is a $T$-equivariant inclusion and $Z$ is disjoint from a particular Schubert variety $X_\mu$, then the restricted class vanishes: $i^*[X_\mu]_T = 0$. 

\subsection{Grassmannians and Schubert varieties}

We follow the exposition of \cite{AH}. Given positive integers $k,n$, the complex \emph{Grassmannian} is the collection of $k$-dimensional subspaces of $\CC^n$:
\[
\Gr(k,n) := \{ V \subset \CC^n \mid \dim_\CC(V) = k \}
\]

Fix a complete flag of $\CC^n$, henceforth known as the \emph{reference flag}. We may define a cell decomposition of $\Gr(k,n)$ with respect to $U_\bullet$ indexed by partitions $\lambda$ which fit inside of a $k \times (n-k)$ rectangle. These cells have codimension given by $|\lambda|$, and their closures are referred to as the \emph{Schubert varieties} $X_\lambda(U_\bullet)$, defined by:

\[
X_\lambda(U_\bullet) := \{ V \in \Gr(k,n) \mid \dim_\CC(V \cap U_{n-k+i-\lambda_i}) \geq i \text{ for } 1 \leq i \leq k \}
\]

Let $F_\bullet, \widetilde{F}_\bullet$ be the standard reference flags defined by $F_i := \langle e_1,\dots,e_i \rangle$, and $\widetilde{F}_i := \langle e_n, \dots, e_{n-i+1} \rangle$. Then, we have the following lemma:

\begin{lemma}\label{lem: grass-intersect}
    Let $\lambda,\mu$ be partitions, and define $\mu^* := (n-k-\mu_k,\dots,n-k-\mu_1)$ to be the complement of $\mu$ in the $k \times (n-k)$ rectangle. Then, we have

    \[
X_\lambda(F_\bullet) \cap X_\mu(\widetilde{F}_\bullet) = \varnothing \text{ unless } \lambda \subset \mu^*
\]
\end{lemma}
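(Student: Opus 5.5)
We argue by contraposition: assuming $V \in X_\lambda(F_\bullet) \cap X_\mu(\widetilde{F}_\bullet)$, we show $\lambda \subset \mu^*$, i.e.\ $\lambda_i + \mu_{k+1-i} \leq n-k$ for every $1 \leq i \leq k$. The tool is a dimension count inside the $k$-dimensional space $V$, using that the two reference flags are opposite: $F_a \cap \widetilde{F}_b = \langle e_{n-b+1},\dots,e_a\rangle$, which is zero when $a+b \leq n$.

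Fix an index $i$ and set $j := k+1-i$. The defining conditions of the two Schubert varieties give
\[
\dim(V \cap F_{n-k+i-\lambda_i}) \geq i, \qquad \dim(V \cap \widetilde{F}_{n-k+j-\mu_j}) \geq j .
\]
Both subspaces lie in $V$, and $i+j = k+1 > k = \dim V$. Hence
\[
\dim\bigl(V\cap F_{n-k+i-\lambda_i}\cap \widetilde{F}_{n-k+j-\mu_j}\bigr) \geq i+j-k = 1,
\]
so in particular $F_{n-k+i-\lambda_i}\cap \widetilde{F}_{n-k+j-\mu_j} \neq 0$. By the opposite-flag observation this forces
\[
(n-k+i-\lambda_i)+(n-k+j-\mu_j) \geq n+1 .
\]
Substituting $i+j = k+1$ gives $2n-2k+k+1-\lambda_i-\mu_j \geq n+1$, which simplifies to $\lambda_i + \mu_{k+1-i} \leq n-k$.

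This inequality says exactly that $\lambda_i \leq n-k-\mu_{k+1-i}$, and the right-hand side is the $i$th part of $\mu^*$ as defined in the statement. Since it holds for every $i$, we get $\lambda \subset \mu^*$. Contrapositively, if $\lambda \not\subset \mu^*$ the intersection is empty. The main point needing care is the indexing: the $i$th part of $\mu^*$ must be paired with $\mu_{k+1-i}$, and that reversed pairing is exactly what the choice $j = k+1-i$ produces. The rest is routine linear algebra.
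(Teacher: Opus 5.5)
Your proof is correct. The paper gives no proof of this lemma; it quotes it as a standard Grassmannian fact from the exposition of Abe--Horiguchi. Your argument is the standard one: take $i+j=k+1$. The two Schubert conditions then force a nonzero vector in $F_{n-k+i-\lambda_i}\cap\widetilde{F}_{n-k+j-\mu_j}$, and oppositeness of the flags turns this into $\lambda_i+\mu_{k+1-i}\le n-k$, which is exactly $\lambda\subset\mu^*$.

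Two small points are worth stating explicitly.
\begin{itemize}
\item You tacitly assume $\lambda$ and $\mu$ fit in the $k\times(n-k)$ rectangle. This is needed for the indices $n-k+i-\lambda_i$ and for $\mu^*$ to make sense.
\item Your argument uses only that $F_a\cap\widetilde{F}_b=0$ when $a+b\le n$, and the inequality you obtain is symmetric in $\lambda$ and $\mu$. So it applies verbatim to the translated pair $w_UF_\bullet$, $w_U\widetilde{F}_\bullet$. It also covers the reversed form $\mu_{m,d}\not\subset\mu_0^*$, which is how the lemma is actually used in Lemma~\ref{lem: musd-vanishing}.
\end{itemize}
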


\subsection{Partial Flag varieties and Springer Fibers}

Given a composition $\alpha := (\alpha_1,\dots,\alpha_k) \vDash n$, the type A \emph{partial flag variety} is defined to be space of flags
\[
\Fl_\alpha(\CC^n) = \{ V_\bullet = (V_0 \subset V_{\alpha_1} \subset V_{\alpha_1 + \alpha_2} \subset \dots \subset V_n) : \dim_\CC V_{\alpha_1 + \dots + \alpha_i} = \alpha_1 + \dots + \alpha_i\}
\]

where the jumps in dimension are given by $\alpha$. If $\alpha = (1^n)$, we refer to $\Fl_\alpha(\CC^n)$ as the \emph{complete flag variety} and write $\Fl_n(\CC^n) := \Fl_{1^n}(\CC^n)$, and if $m < n$, we write $\Fl_{1^m}(\CC^n) := \Fl_{(1^m,n-m)}(\CC^n)$.

A classical result of Borel states that the cohomology (presented here with rational coefficients) of the complete flag variety is isomorphic to the coinvariant algebra:
\[
H^*(\Fl_n(\CC^n)) \cong \QQ[\xxx_n]/\QQ[\xxx_n]^{\symm_n}_+ = R_n
\]

and so we refer to quotient ring presentations of cohomology rings as "Borel presentations." A standard equivariant analogue gives the presentation \cite[Chapter~4, Proposition~4.1]{AF} for $T = (\CC^\times)^n$-equivariant cohomology:
\[
H^*_T(\Fl_n) = \QQ[\xxx_n,\ttt_n]/(e_i(\xxx_n) - e_i(\ttt_n) : 1 \leq i \leq n)
\]

where $e_i(\xxx_n), e_i(\ttt_n)$ are the $i$th elementary symmetric functions in the $\xxx,\ttt$ variables respectively. We have similar presentations for partial flag varieties; see \cite{Borel1953}
for the ordinary-cohomology presentation and
\cite[Chapter~4, Corollary~5.4]{AF} for the equivariant one:
\[
H^*(\Fl_\alpha(\CC^n)) \cong \QQ[\xxx_n]^{\symm_{\alpha_1} \times \dots \times \symm_{\alpha_k}}/\QQ[\xxx_n]^{\symm_n}_+ = R_n^{\symm_{\alpha_1} \times \dots \times \symm_{\alpha_k}}
\]
\[
H^*_T(\Fl_\alpha) = \QQ[\xxx_n,\ttt_n]^{\symm_{\alpha_1} \times \dots \times \symm_{\alpha_k}}/(e_i(\xxx_n) - e_i(\ttt_n) : 1 \leq i \leq n)
\]
where $\symm_{\alpha_1} \times \dots \times \symm_{\alpha_k}$ acts only on the $\xxx$-variables.
We may ask that flags $V_\bullet$ be preserved by a linear operator $X: \CC^n \to \CC^n$. If $X$ is nilpotent with Jordan type $\lambda$, the \emph{nilpotent Springer fiber} is the set of flags
\[
\BBB_\lambda = \{ V_\bullet \in \Fl_n(\CC^n) : XV_i \subset V_i \}
\]
The cohomology ring of $\BBB_\lambda$ was first computed by DeConcini-Procesi, with relations later simplified by Tanisaki:
\[
H^*(\BBB_\lambda) \cong \QQ[\xxx_n]/( e_d(S): d > |S| - p^n_{|S|}(\lambda) )
\]
where $p^n_m(\lambda) = \lambda'_n + \dots + \lambda'_{n-m+1}$, and $\lambda'$ is the transpose of $\lambda$, padded with $0$'s to have length $n$. We refer to $I_\lambda = (e_d(S): d > |S|-p^n_{|S|}(\lambda))$ as the \emph{Tanisaki ideal}. 

The full torus $T=(\CC^\times)^n$ does not preserve $\BBB_\lambda$, but it is preserved by a subtorus 
\[
S = (\underbrace{u_1,\dots,u_1}_{\lambda_1},\dots,\underbrace{u_\ell,\dots,u_\ell}_{\lambda_\ell}) = (\CC^\times)^\ell \subset T
\]
Kumar and Procesi gave an algebro-geometric realization of the
$S$-equivariant cohomology of type $A$ Springer fibers \cite{KumarProcesi2012}; see also
Precup--Richmond \cite{PrecupRichmond2021} for an explicit equivariant basis. Abe and
Horiguchi \cite{AH} later gave a Borel-style presentation by generalizing the work of Tanisaki in \cite{T}:
\[
H^*_S(\BBB_\lambda) \cong \QQ[\xxx_n,\sss_n] / (e_d(\xxx_S|\sss_{\xi(n)}) : d > |S|-p^n_{|S|}(\lambda))
\]
Their argument consists of showing that the image of the Springer fiber $\BBB_\lambda$ under the map $\BBB_\lambda \to \Gr(d,n)$ sending $V_\bullet \mapsto V_d$ is contained in a certain Schubert variety $X_{\mu_0}$. It turns out that $e_d(\xxx_n|\sss_{\xi(n)})$ represents the closure of a certain Schubert cell $X_{|S|,d}$, which does not meet $X_{\mu_0}$. Pulling the relation back along the maps $H^*_T(\Gr(d,n)) \to H^*_S(\Gr(d,n)) \to H^*_S(\BBB_\lambda)$ implies the desired relations $e_d(\xxx_S|\sss_{\xi(n)})$ vanish in $H^*_S(\BBB_\lambda)$.

\subsection{The Rings $R_{n,\lambda,s}$ and the $\Delta$-Springer fibers}

In his thesis, Griffin introduced the \emph{$\Delta$-Springer modules} $R_{n,\lambda,s}$, which is a simultaneous generalization of the generalized coinvariant algebras $R_{n,k}$ of Haglund-Shimozono-Rhoades, as well as the Garsia-Procesi module $R_{\lambda}$. Given a partition $\lambda \vdash k \leq n$, and $s \geq \ell(\lambda)$, let
\[
R_{n,\lambda,s} := \frac{\QQ[\xxx_n]}{(x_1^s,\dots,x_n^s) + (e_d(S) : d > |S|-p^n_{|S|}(\lambda))}
\]

The $\Delta$-Springer module was realized as the cohomology ring of a variety
$Y_{n,\lambda,s}$ by Griffin-Levinson-Woo in \cite{GLW}; we briefly recall the
construction. Given a partition $\lambda\vdash k\leq n$ and $s\geq \ell(\lambda)$,
let
\[
\Lambda:=(n-k+\lambda_1,\dots,n-k+\lambda_s),
\]
where $\lambda_j=0$ if $j>\ell(\lambda)$. Let
\[
K:=|\Lambda|=s(n-k)+k,
\]
and let $N_\Lambda$ be a nilpotent operator of Jordan type $\Lambda$. Define the
\emph{$\Delta$-Springer fiber} to be the set of partial flags
\[
Y_{n,\lambda,s}:=\left\{V_\bullet\in \Fl_{1^n}(\CC^K)
\,\middle|\,
N_\Lambda V_i\subset V_i\text{ for }1\leq i\leq n,
\ N_\Lambda^{n-k}\CC^K\subset V_n\right\}.
\]
By \cite[Remark~3.2]{GLW}, since $N_\Lambda$ is nilpotent, the reduced
subvariety defined above is equal to the reduced subvariety cut out by the
conditions
\[
N_\Lambda V_i\subset V_{i-1}\qquad (1\leq i\leq n).
\]
We will freely use this equivalent shifted description when proving geometric
lemmas.
We will use a specific form of $N_\Lambda$ which will turn out to be more convenient. Let $P$ denote the filling of $\Lambda$ with the numbers $1,2,\dots,K$ from (1) top to bottom, (2) right to left; we shall refer to $P$ as the \emph{canonical filling} of $\Lambda$. 
\begin{example}\label{ex: P-diagram}
Let $\lambda = (2,2,1), n = 7$ and $s = 4$, we have $K = 4(2) + 5 = 13$, we have 
\[
P =
\begin{ytableau}
   10& 6& *(pink)3 & *(pink)1 \\
   11& 7& *(pink)4 & *(pink)2 \\
   12& 8& *(pink)5 \\
   13& 9
\end{ytableau}
\]
\end{example}
We will refer to $[\lambda]$ (shaded in pink) as the boxes forming shape $\lambda$ containing the numbers $1,\dots,k$. Fixing a reference basis $e_1,\dots,e_K$ of $\CC^K$, we define the nilpotent element $N_\Lambda$ to be
\[
N_\Lambda(e_i) = \begin{cases}
    e_j & , j\text{ is directly to the right of }i \\
    0 & , i\text{ is on the right edge of }\Lambda
\end{cases}
\]
For instance, in the running example, we have:
\[
N_\Lambda: \begin{cases}
    e_{10} \mapsto e_6 \mapsto e_3 \mapsto e_1 \mapsto 0 \\
    e_{11} \mapsto e_7 \mapsto e_4 \mapsto e_2 \mapsto 0 \\
    e_{12} \mapsto e_8 \mapsto e_5 \mapsto 0 \\
    e_{13} \mapsto e_9 \mapsto 0
\end{cases}
\]

We define the function $\xi: [K] \to [s]$ which keeps track of the rows of $P$, which will be convenient later. Let $P$ denote the canonical filling of $\Lambda$, as above. Writing $P_i = \{a_1,\dots,a_{\Lambda_i}\}$ for the set of entries in the $i$th row of $P$, we define:
\[
\xi(i) = j \Leftrightarrow i \text{ occurs in }P_j
\]
We define the torus
\[
U:=\left\{\operatorname{diag}(u_{\xi(1)},\dots,u_{\xi(K)}):(u_1,\dots,u_s)\in
(\CC^\times)^s\right\}\subset (\CC^\times)^K.
\]
Thus $U$ acts diagonally on $\CC^K$, with all basis vectors in the same row of
$P$ having the same weight.

For later use, fix a permutation $w_U\in \symm_K$ such that
the flag $U_\bullet:=w_UF_\bullet$ refines
\[
\cdots\subset N_\Lambda^2\CC^K\subset N_\Lambda\CC^K\subset \CC^K.
\]
\begin{definition}\label{def: phi-lambda}
We define
\[
\phi_\lambda:[n]\to [s],\qquad \phi_\lambda(r):=\xi(w_U(r)).
\]
Equivalently, $\phi_\lambda=(\xi\circ w_U)|_{[n]}$.
\end{definition}
In particular, the restriction of equivariant Chern classes to the fixed flag
indexed by $w_U$ sends
\[
x_r\longmapsto u_{\xi(w_U(r))}=u_{\phi_\lambda(r)}.
\]
More generally, if a $U$-fixed point is indexed by a word $w$, then the induced
restriction map sends $x_i$ to $u_{\xi(w(i))}$ and fixes the classes $u_j$.

\subsection{Commutative algebra and orbit harmonics}

We now recall the orbit harmonics philosophy in the form needed for this paper.
Let $X \subset \QQ^n$ be a finite set, and let
\[
I(X):=\{f\in \QQ[\xxx_n] : f(x)=0 \text{ for all } x\in X\}
\]
be its vanishing ideal. Since $X$ is finite, evaluation identifies the quotient
$\QQ[\xxx_n]/I(X)$ with the ring of $\QQ$-valued functions on $X$. In particular,
\[
\dim_\QQ \QQ[\xxx_n]/I(X)=|X|.
\]

The key observation is that one may pass from the generally inhomogeneous ideal $I(X)$
to a homogeneous ideal without changing the vector space dimension of the quotient.
For a nonzero polynomial
\[
f=f_d+f_{d-1}+\cdots+f_0
\]
with each $f_i$ homogeneous of degree $i$ and $f_d\neq 0$, define the
\emph{top-degree part} of $f$ by $\tau(f):=f_d$. We then set
\[
T(X):=\langle \tau(f) : f\in I(X),\ f\neq 0\rangle.
\]
Equivalently, $T(X)=\gr I(X)$ is the associated graded ideal of $I(X)$ with respect
to the total-degree filtration. The quotient $\QQ[\xxx_n]/T(X)$ is therefore graded,
and one has
\[
\dim_\QQ \QQ[\xxx_n]/T(X)=\dim_\QQ \QQ[\xxx_n]/I(X)=|X|.
\]

When the point locus $X$ is stable under the action of a finite group $G$ acting linearly
on $\QQ^n$, both $I(X)$ and $T(X)$ are $G$-stable. It follows that the graded quotient
$\QQ[\xxx_n]/T(X)$ inherits a graded $G$-module structure. The passage
\[
X \rightsquigarrow \QQ[\xxx_n]/T(X)
\]
is often referred to as \emph{orbit harmonics}. One may also think of taking $\gr$ of the ideal $I(X)$ as a flat deformation of the point locus $X$ to a nonreduced scheme supported at the origin.

In many examples, the ideal $T(X)$ admits an explicit description by generators, and
this turns a concrete point set into a graded quotient ring with representation-theoretic
or geometric significance. In our setting, Griffin showed that the ordinary
$\Delta$-Springer ring $R_{n,\lambda,s}$ may be obtained from a finite point locus
$X_{n,\lambda,s}(\aalpha_s)\subset \QQ^n$ by the rule
\[
R_{n,\lambda,s}\cong \QQ[\xxx_n]/\gr I(X_{n,\lambda,s}(\aalpha_s)).
\]
This point-locus description should be viewed as the nonequivariant shadow of the
constructions used later in this paper.

Our equivariant presentation is guided by the same philosophy. We enlarge the finite
locus to a universal locus
\[
X_{n,\lambda,s}\subset \QQ^{n+s}
\]
whose extra coordinates record the torus parameters. The ideal
$J^\QQ_{n,\lambda,s}\subset \QQ[\xxx_n,\uuu_s]$ will be defined by explicit vanishing
conditions on this universal locus. Proving that
\[
J^\QQ_{n,\lambda,s}=I(X_{n,\lambda,s})
\]
will imply that the quotient $\QQ[\xxx_n,\uuu_s]/J^\QQ_{n,\lambda,s}$ is a free
$\QQ[\uuu_s]$-module with basis indexed by the same substaircase monomials that appear
in the ordinary theory. 

We will make use of the following standard lemma.
\begin{lemma}\label{lem: basis-gr-imply-basis}
    Let $I \subset \QQ[\xxx_n]$ be an ideal, and $\BBB$ a family of homogeneous elements of $\QQ[\xxx_n]$ which descends to a $\QQ$-basis of $\QQ[\xxx_n]/\gr I$. Then, $\BBB$ descends to a $\QQ$-basis of $\QQ[\xxx_n]/I$.
\end{lemma}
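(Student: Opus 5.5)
Let $\BBB = \{b_1,\dots,b_m\}$; finiteness follows from $\dim_\QQ \QQ[\xxx_n]/\gr I<\infty$ in the intended application. In general $\BBB$ has the cardinality of a basis of $\QQ[\xxx_n]/\gr I$, and I will argue spanning and linear independence separately, without any finiteness assumption. The tool is the top-degree map $\tau$. The definitions give two facts I will use repeatedly: $\gr I = \langle \tau(f) : f\in I\setminus\{0\}\rangle$, and every homogeneous element of $\gr I$ of degree $d$ equals $\tau(f)$ for some $f\in I$ of degree $d$. The second holds because the $\tau(f)$ of a fixed degree, together with $0$, form a vector space: if $f,g\in I$ have degree $d$, then either $\tau(f)+\tau(g)=\tau(f+g)$ or $\tau(f)+\tau(g)=0$, and multiples $x^a\tau(f)=\tau(x^a f)$ stay in this set.

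\emph{Linear independence.} Suppose $\sum_i c_i b_i \in I$ with not all $c_i = 0$. Let $d$ be the largest degree among the $b_i$ with $c_i\neq 0$, and set $g := \sum_{\deg b_i = d} c_i b_i$. The degree-$d$ elements of $\BBB$ are linearly independent modulo $\gr I$, since they are part of a basis there. Hence $g\notin \gr I$, and in particular $g \neq 0$. Then $\tau\bigl(\sum_i c_i b_i\bigr) = g$, and since $\sum_i c_i b_i\in I$ this gives $g\in \gr I$, a contradiction.

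\emph{Spanning.} I will show by induction on $d$ that every polynomial $f$ of degree $\le d$ lies in $\operatorname{span}_\QQ \BBB + I$. Write $f = f_d + (\text{lower})$. Because $\BBB$ spans $\QQ[\xxx_n]/\gr I$ and $\gr I$ is homogeneous, I can write $f_d = \sum_{\deg b_i = d} c_i b_i + h$ with $h\in \gr I$ homogeneous of degree $d$. This uses the graded structure: project any expression onto degree $d$, noting that $\BBB$ is homogeneous. By the second fact above, $h = \tau(g)$ for some $g\in I$ of degree $d$, or $h=0$. Then $f - \sum_i c_i b_i - g$ has degree $<d$ and lies in $\operatorname{span}\BBB + I$ by induction. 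The base case is degree $<0$, i.e.\ $f=0$.

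The main obstacle is the spanning step: it needs both the observation that homogeneous elements of $\gr I$ of each degree are genuinely top-degree parts of elements of $I$, and the degree-by-degree spanning of $\BBB$ modulo the homogeneous ideal $\gr I$. Once those two facts are checked, both directions are a routine leading-term argument.
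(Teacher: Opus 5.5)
Your proof is correct. Both halves are the standard leading-form argument. For independence, the top homogeneous component of a nontrivial combination $\sum_i c_i b_i$ is a nontrivial combination of same-degree elements of $\BBB$, so it lies outside $\gr I$. For spanning, you induct on degree, using that every homogeneous element of $\gr I$ equals $\tau(f)$ for some $f\in I$ of the same degree.

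The paper gives no proof of this lemma; it quotes it as standard, and only the independence half is actually used, in Proposition~\ref{prop: module-dim}. So there is no alternative route to compare against.

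The one place to tighten is your ``second fact.'' You should state the conclusion explicitly. The direct sum over $d$ of the spaces $\{\tau(f) : f\in I,\ \deg f = d\}\cup\{0\}$ is closed under multiplication by monomials, so it is an ideal. It contains all generators of $\gr I$ and is contained in $\gr I$, so it equals $\gr I$. That equality is what identifies the degree-$d$ piece of $\gr I$ with the top-degree parts of degree-$d$ elements of $I$.
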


\section{Quotient Rings and Orbit Harmonics}\label{sec:orbitharmonics}

\subsection{The finite locus $X_{n,\lambda,s}(\boldmath{\alpha}_s)$}

Fix positive integers $n,k,s$, with $k \leq n$, and let $\lambda$ be a partition such that $\ell(\lambda) \leq s$. Let $\aalpha_s := (\alpha_1,\dots,\alpha_s) \in \QQ$, $\alpha_i \neq \alpha_j$ for all $i,j$ be a tuple of $s$ distinct rational numbers. Then, the point locus $X_{n,\lambda,s}$ is defined to be
\[
X_{n,\lambda,s}(\aalpha_s) := \{ (p_1,\dots,p_n) \in \QQ^n : p_i \in \{\alpha_1,\dots,\alpha_s\} \text{ for } 1 \leq i \leq n, \#\alpha_i \geq \lambda_i \text{ for } 1 \leq i \leq s \}
\]
where $\#\alpha_i$ is the number of times $\alpha_i$ appears amongst $p_1,\dots,p_n$. We have the \emph{defining ideal} of $X_{n,\lambda,s}(\aalpha_s)$:
\[
I(X_{n,\lambda,s}(\aalpha_s)) := \{ f \in \QQ[\xxx_n] : f(p) = 0 \text{ for } p \in X_{n,\lambda,s}\} \subset \QQ[\xxx_n]
\]
We record here a few results of Griffin, which we will use later on.
\begin{theorem}\cite{G}
    We have $I_{n,\lambda,s} = \gr I(X_{n,\lambda,s}(\aalpha_s))$, so that $R_{n,\lambda,s} = \QQ[\xxx_n]/\gr I(X_{n,\lambda,s}(\aalpha_s))$. Furthermore, we have an isomorphism of $\symm_n$-modules $R_{n,\lambda,s} \cong_{\symm_n} \QQ X_{n,\lambda,s}(\aalpha_s)$.
\end{theorem}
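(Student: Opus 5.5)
The theorem is Griffin's, so I would reprove it along the standard orbit-harmonics template. There are three steps: show $I_{n,\lambda,s}\subseteq \gr I(X_{n,\lambda,s}(\aalpha_s))$, show $\dim_\QQ R_{n,\lambda,s}\le |X_{n,\lambda,s}(\aalpha_s)|$, and conclude. Here $I_{n,\lambda,s}=(x_1^s,\dots,x_n^s)+(e_d(S): d>|S|-p^n_{|S|}(\lambda))$. Since $\dim \QQ[\xxx_n]/\gr I(X)=|X|$, the two inequalities force equality of ideals. The $\symm_n$-statement then follows formally. The locus is $\symm_n$-stable, so $\QQ[\xxx_n]/I(X)\cong \QQ X$ as ungraded $\symm_n$-modules. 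Passing to the associated graded with respect to the $\symm_n$-stable degree filtration preserves the isomorphism type, since representations of $\symm_n$ over $\QQ$ are semisimple.

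For the containment, I would exhibit explicit inhomogeneous elements of $I(X)$ whose top-degree parts are the generators. The first kind of element is $\prod_{j=1}^s(x_i-\alpha_j)$. It vanishes on $X$ because each coordinate lies in $\{\alpha_1,\dots,\alpha_s\}$, and its top part is $x_i^s$. For the second kind, fix $S\subseteq[n]$ with $|S|=m$. At a point $p\in X$, the complementary coordinates $p_{[n]\setminus S}$ are $n-m$ values, and they must cover the multiplicity demands $\lambda_i$ as far as possible. A counting argument shows that the multiset $\{p_i: i\in S\}$ is forced to contain at least $\max(0,\lambda_j-(n-m))$ copies of $\alpha_j$ for each $j$, summed in the appropriate way. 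The total number of forced entries is exactly $p^n_m(\lambda)=\sum_j \max(0,\lambda_j-(n-m))$, read in transpose form. Consequently
\[
\prod_{i\in S}(1+x_i t)=\Big(\prod_j (1+\alpha_j t)^{c_j}\Big)\cdot g(t),
\]
where $\sum_j c_j=p^n_m(\lambda)$ and $g(t)$ has degree $m-p^n_m(\lambda)$ in $t$. Since the forced multiplicities $c_j$ depend on $p$, I would handle this as follows. Multiply by the universal factor $\prod_j(1+\alpha_j t)^{-\max(0,\lambda_j-(n-m))}$, which is a fixed power series independent of $p$, and extract the coefficient of $t^d$ for $d>m-p^n_m(\lambda)$. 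This gives a polynomial $\sum_{a} e_{d-a}(\xxx_S)h_a(\text{const})$ that vanishes on $X$ and has top-degree part $e_d(\xxx_S)$. This is the Tanisaki argument for Garsia–Procesi, and the main check is the counting lemma on forced multiplicities.

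For the dimension bound, I would produce a spanning set of $R_{n,\lambda,s}$ of size $|X|$. First I would use the $\symm_n$-equivariant Garsia–Procesi-type recursion on $R_{n,\lambda,s}$. Filter by the power of $x_n$, giving exact sequences of the form $R_{n-1,\lambda^{(j)},s}\xrightarrow{\cdot x_n^{j}}R_{n,\lambda,s}$, where the $\lambda^{(j)}$ are obtained by removing a cell from column-related positions, or by using $x_n^s=0$ for the $j\ge$ length steps. This shows that $\dim R_{n,\lambda,s}\le\sum_j \dim R_{n-1,\lambda^{(j)},s}$. The point set satisfies the matching recursion obtained by conditioning on the last coordinate $p_n=\alpha_j$: the remaining points form $X_{n-1,\lambda-e_j,s}$ after sorting, or $X_{n-1,\lambda,s}$ when $\lambda_j=0$ for the new value. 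Induction on $n$ gives the bound. I expect this step to be the main obstacle. One has to verify that the multiplication maps land correctly and that the kernel and cokernel ideals match $I_{n-1,\lambda^{(j)},s}$, which requires the identity $e_d(S)=e_d(S\setminus\{n\})+x_ne_{d-1}(S\setminus\{n\})$ together with careful bookkeeping of $p^n_m$ versus $p^{n-1}_{m-1}$. Alternatively, one can use Griffin's substaircase monomial basis directly: show by straightening modulo $I_{n,\lambda,s}$ that the $(n,\lambda,s)$-substaircase monomials span, and count them to equal $|X|$. Either route completes the proof.
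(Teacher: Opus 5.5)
\textbf{Verdict: genuine gap.} The paper only quotes this theorem from Griffin, so there is no in-paper proof to compare with. Your architecture is the standard one: the containment $I_{n,\lambda,s}\subseteq \gr I(X)$, an upper bound $\dim_\QQ R_{n,\lambda,s}\le |X_{n,\lambda,s}(\aalpha_s)|$, and Maschke for the $\symm_n$-statement. The containment and the $\symm_n$-argument are fine. One small correction: the bounds $\max(0,\lambda_j-(n-m))$ do not depend on $p$; only the actual multiplicities do. That independence is exactly why dividing by a fixed product works.

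The gap is the upper bound, which is the real content of the theorem. You flag it as the main obstacle and execute neither route. Moreover, the version you state is wrong: multiplication by a power of $x_n$ does not give well-defined maps $R_{n-1,\lambda^{(j)},s}\to R_{n,\lambda,s}$. For $\lambda=(2,1)$, $n=3$, we have $x_1+x_2\in I_{2,(1,1),s}$, but $x_1+x_2\equiv -x_3\neq 0$ in $R_{3,(2,1),s}$. The pairing between powers of $x_n$ and removed cells, which you leave vague, is also essential. Pairing $x_3^0$ with $(2)$ fails, since $x_1\in I_{2,(2),s}$ but $x_1\notin I_{3,(2,1),s}+(x_3)$. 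Your alternative route via substaircase monomials only restates what must be shown (spanning plus a count), and spanning again needs the lemma below.

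What is needed is an associated-graded statement. Let $F_j$ be the image of $x_n^j\QQ[\xxx_n]$ in $R_{n,\lambda,s}$, so $F_s=0$. For $j\le \ell(\lambda)$, let $\lambda^{(j)}$ be $\lambda$ with one cell removed from row $j$ (rows in weakly decreasing order), re-sorted. For $\ell(\lambda)<j\le s$, put $\lambda^{(j)}=\lambda$. The key lemma is
\[
x_n^{j-1}\,I_{n-1,\lambda^{(j)},s}\subseteq I_{n,\lambda,s}+(x_n^{j}),
\]
which makes $f\mapsto x_n^{j-1}f$ a surjection $R_{n-1,\lambda^{(j)},s}\twoheadrightarrow F_{j-1}/F_j$.

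To prove it for a generator $e_d(T)$ with $T\subseteq[n-1]$ and $|T|=m$, first check that $p^{n-1}_m(\lambda^{(j)})=p^n_{m+1}(\lambda)-\delta$. Here $\delta=1$ if $\lambda_j\ge n-m$ and $\delta=0$ otherwise.
\begin{itemize}
\item If $\delta=1$, the condition on $d$ is exactly the condition for $e_d(T\cup\{n\})\in I_{n,\lambda,s}$. One application of $e_d(T\cup\{n\})=e_d(T)+x_ne_{d-1}(T)$ then suffices.
\item If $\delta=0$, the only new generator is $e_D(T)$ with $D=m+1-p^n_{m+1}(\lambda)$. Put $c=\lambda'_{n-m}=p^n_{m+1}(\lambda)-p^n_m(\lambda)$. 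Then $e_{D+c}(T)\in I_{n,\lambda,s}$, and $e_{d'}(T\cup\{n\})\in I_{n,\lambda,s}$ for all $d'>D$. Iterating the identity $c$ times gives $x_n^{c}e_D(T)\in I_{n,\lambda,s}$. This suffices because row $j$ is shorter than $n-m$, which forces $j-1\ge c$.
\end{itemize}
Taking $T=[n-1]$ shows $x_n^{\ell(\lambda)}\in I_{n,\lambda,s}$ when $k=n$. This kills the pieces with $j>\ell(\lambda)$, matching $X_{n-1,\lambda,s}=\varnothing$.

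With this lemma, your point-count recursion and induction on $n$ give $\dim R_{n,\lambda,s}\le|X_{n,\lambda,s}(\aalpha_s)|$. Without it, the proposal only establishes the reverse inequality.
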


\begin{definition}
    Let $\mathbf{a}^{(1)},\dots,\mathbf{a}^{(\ell)}$ be compositions of length $\lambda_1,\dots,\lambda_\ell$. A \emph{shuffle} of the compositions $\mathbf{a}^{(1)},\dots,\mathbf{a}^{(\ell)}$ is a composition $\mathbf{a} = (a_1,\dots,a_{\lambda_1 + \dots + \lambda_\ell})$ such that $\mathbf{a}$ can be decomposed into disjoint subsequences $\mathbf{a}^{(1)},\dots,\mathbf{a}^{(\ell)}$.
\end{definition}

\begin{definition}\cite{G}\label{def: nls-substaircases}
    For $1 \leq j \leq \lambda_1$, let $\beta^j(\lambda) = (0,1,\dots,\lambda'_j -1)$. An \emph{$(n,\lambda,s)$ staircase} is a shuffle $\mathbf{\gamma} = (\gamma_1,\dots,\gamma_n)$ of the compositions $\beta^1(\lambda),\dots,\beta^{\lambda_1}(\lambda)$ and $((s-1)^{n-k})$. Define the set of \emph{$(n,\lambda,s)$-substaircases} to be the set of compositions
    \[
\CCC_{n,\lambda,s} := \{ \mathbf{a} = (a_1,\dots,a_n) : 0 \leq a_i \leq \gamma_i \text{ for some } (n,\lambda,s)\text{-staircase } \mathbf{\gamma} \}
\]
    and the set of $(n,\lambda,s)$-substaircase monomials to be
    \[
\AAA_{n,\lambda,s} := \{ \xxx^\mathbf{a} := x_1^{a_1}\dots x_n^{a_n} : \mathbf{a} \in \CCC_{n,\lambda,s} \}
\]
\end{definition}

\begin{theorem}\cite{G}\label{thm: griffin-basis}
    The set $\AAA_{n,\lambda,s}$ of $(n,\lambda,s)$-substaircase monomials forms a $\QQ$-basis of $R_{n,\lambda,s}$.
\end{theorem}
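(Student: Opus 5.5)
The statement is Griffin's theorem that the substaircase monomials $\AAA_{n,\lambda,s}$ form a $\QQ$-basis of $R_{n,\lambda,s}$. The plan combines a spanning argument with a dimension count from orbit harmonics. By the preceding theorem of \cite{G}, $R_{n,\lambda,s}=\QQ[\xxx_n]/\gr I(X_{n,\lambda,s}(\aalpha_s))$, so
\[
\dim_\QQ R_{n,\lambda,s}=|X_{n,\lambda,s}(\aalpha_s)|.
\]
It therefore suffices to prove two things: that $\AAA_{n,\lambda,s}$ spans $R_{n,\lambda,s}$, and that $|\AAA_{n,\lambda,s}|=|X_{n,\lambda,s}(\aalpha_s)|$.

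\textbf{Spanning.} I would run a straightening argument modulo the ideal $(x_1^s,\dots,x_n^s)+I_\lambda$, where $I_\lambda$ denotes the Tanisaki ideal of Section~\ref{sec:background}. Order monomials by a term order, say lex with $x_n>\cdots>x_1$, which is compatible with Garsia--Procesi-type arguments. Take a monomial $\xxx^{\mathbf a}$ that is not substaircase. I would show that some generator has leading term dividing $\xxx^{\mathbf a}$, using two sources:
\begin{itemize}[label={}]
\item $x_i^s$, when some $a_i\ge s$;
\item otherwise, a complete homogeneous polynomial $h_d(x_{i},\dots,x_n)$ with $d>i-1-\cdots$. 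These lie in the ideal via the standard identity expressing $h_d$ of a tail of the variables through the $e_d(S)$, exactly as in Garsia--Procesi's proof for $R_\lambda$ and in Haglund--Rhoades--Shimozono for $R_{n,k}$.
\end{itemize}
The combinatorial input is a characterization of substaircases: $\mathbf a$ is dominated by a shuffle of the $\beta^j(\lambda)$ and $((s-1)^{n-k})$ if and only if, after sorting, each suffix satisfies a bound involving $p^n_m(\lambda)$ together with the cap $s-1$. I would prove this greedily, assigning the largest entries to the $(s-1)$-slots and the remaining entries to the columns of $\lambda$ in decreasing order.

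\textbf{Counting.} I would show $|\CCC_{n,\lambda,s}|=|X_{n,\lambda,s}(\aalpha_s)|$ by induction on $n$, matching recursions on both sides.
\begin{itemize}[label={}]
\item On the point side, condition on the value of $p_n$. Either $p_n=\alpha_i$ with the multiplicity constraint still met after removing it, giving a locus for $(n-1,\lambda,s)$; or $p_n=\alpha_i$ is one of the required occurrences, giving a locus for $(n-1,\lambda-e_i,s)$ up to re-sorting, with a possible drop in $k$.
\item On the monomial side, condition on the last exponent $a_n$ and use the greedy characterization; this should give the same recursion, the Garsia--Procesi-type recursion $\lambda\mapsto\lambda^{(i)}$ together with the $s$ free choices coming from the $n-k$ extra slots.
\end{itemize}
Spanning together with equal cardinalities then gives the basis.

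The main obstacle is the spanning step: verifying that the needed $h_d$ relations lie in the ideal, and that the straightening terminates exactly on $\CCC_{n,\lambda,s}$ given the interaction between the $x_i^s$ cap and the Tanisaki conditions. I would first try to prove the spanning only modulo $\gr I(X)$, producing explicit elements of $I(X)$ by products $\prod(x_i-\alpha_j)$ over suitable index sets, which avoids the symmetric-function identities.
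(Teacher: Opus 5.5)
The paper does not prove this statement; it is quoted from \cite{G}, so your argument has to stand on its own. Its skeleton is the right one: $\dim_\QQ R_{n,\lambda,s}\ge |X_{n,\lambda,s}(\aalpha_s)|$, spanning by $\AAA_{n,\lambda,s}$, and $|\CCC_{n,\lambda,s}|=|X_{n,\lambda,s}(\aalpha_s)|$.

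\textbf{The gap is the spanning step.} This is where the content of the theorem lies, and it fails as designed, not merely because the bound on $d$ is left blank.

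\emph{Why the straightening cannot work.} In every monomial order the leading term of $h_d(S)$ is the pure power $x_{\max S}^d$, and the leading term of $x_i^s$ is $x_i^s$. So the monomials surviving your straightening are those divisible by none of a set of pure powers, which is a box $\{\mathbf a: a_i<c_i\text{ for all }i\}$. But $\CCC_{n,\lambda,s}$ is a union of boxes and in general not a box.

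\emph{Smallest example.} Take $n=2$, $\lambda=(1)$, $s=2$. Then
\[
R_{2,(1),2}=\QQ[x_1,x_2]/(x_1^2,x_2^2,x_1x_2),\qquad \CCC_{2,(1),2}=\{(0,0),(0,1),(1,0)\}.
\]
Any box containing $(0,1)$ and $(1,0)$ also contains $(1,1)$. So your procedure never eliminates $x_1x_2$, and it ends with a spanning set of size $4>3$. Killing $x_1x_2$ requires a relation with a mixed leading term, here $e_2(x_1,x_2)$ itself.

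\emph{Two related errors.} A characterization of substaircases ``after sorting'' cannot be correct, because $\CCC_{n,\lambda,s}$ is not $\symm_n$-stable: for $n=2$, $\lambda=(1,1)$ we have $(0,1)\in\CCC_{2,(1,1),s}$ but $(1,0)\notin\CCC_{2,(1,1),s}$. Also, under lex with $x_n>\cdots>x_1$, every tail $h_d(x_i,\dots,x_n)$ has leading term $x_n^d$, so these polynomials only bound $a_n$.

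\textbf{The missing idea} is a source of relations with mixed leading terms. The standard one is the Garsia--Procesi filtration of $R_{n,\lambda,s}$ by powers of $x_n$. For $0\le c\le s-1$, let $\lambda^{(c+1)}$ be $\lambda$ with the last box of row $c+1$ removed (and re-sorted) if $c<\ell(\lambda)$. Otherwise let $\lambda^{(c+1)}=\lambda$, regarded in $n-1$ variables; these $c$ occur only when $n>k$, and when $n=k$ one has $x_n^{\ell(\lambda)}\in I_{n,\lambda,s}$. One shows
\[
x_n^{c}\,I_{n-1,\lambda^{(c+1)},s}\subseteq I_{n,\lambda,s}+(x_n^{c+1}).
\]
For a generator $e_d(S)$ with $S\subseteq[n-1]$, there are two cases.
\begin{itemize}
\item[(a)] $e_d(S\cup\{n\})$ is already a generator of $I_{n,\lambda,s}$. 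Then $e_d(S)\equiv e_d(S\cup\{n\})$ modulo $x_n$, and the containment follows.
\item[(b)] Otherwise, iterate $e_d(S\cup\{n\})=e_d(S)+x_ne_{d-1}(S)$ to get $x_n^c e_d(S)\equiv(-1)^c e_{d+c}(S)$ modulo $I_{n,\lambda,s}$. Then $e_{d+c}(S)$ is a generator, because at most $c$ rows of $\lambda$ reach the relevant column.
\end{itemize}
With $x_n^s\in I_{n,\lambda,s}$ and induction on $n$, the set $\bigcup_c x_n^c\AAA_{n-1,\lambda^{(c+1)},s}$ spans $R_{n,\lambda,s}$. Conditioning on $a_n$ gives $\CCC_{n,\lambda,s}=\bigsqcup_c \CCC_{n-1,\lambda^{(c+1)},s}\times\{c\}$, which identifies this spanning set with $\AAA_{n,\lambda,s}$.

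Your fallback through $\gr I(X_{n,\lambda,s}(\aalpha_s))$ can also produce mixed top-degree terms. For example, $(x_1-\alpha_1)(x_2-\alpha_1)$ vanishes on $X_{2,(1),2}(\aalpha_2)$. But you would need such an element for every non-substaircase exponent, and that is the whole difficulty.

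\textbf{Two smaller repairs.}
\begin{itemize}
\item[(1)] Your two cases on the point side overlap. Condition simply on $p_n=\alpha_i$, which gives
\[
|X_{n,\lambda,s}|=\sum_{i\le\ell(\lambda)}|X_{n-1,\lambda-e_i,s}|+(s-\ell(\lambda))\,|X_{n-1,\lambda,s}|.
\]
This matches the substaircase recursion above.
\item[(2)] For the lower bound, use only the containment $I_{n,\lambda,s}\subseteq\gr I(X_{n,\lambda,s}(\aalpha_s))$. The quoted equality of ideals is normally deduced from exactly the spanning statement you are trying to prove.
\end{itemize}
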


\subsection{The linear subspace arrangement $X_{n,\lambda,s}$}

We now consider the universal locus over all tuples $(\alpha_1,\dots,\alpha_s)$.
Let $\QQ[\xxx_n,\uuu_s]$ denote the coordinate ring of $\QQ^{n+s}$. Define
$X_{n,\lambda,s}\subset \QQ^{n+s}$ to be the locus of tuples
\[
(\beta_1,\dots,\beta_n;\alpha_1,\dots,\alpha_s)
\]
such that
\begin{itemize}
    \item $\beta_j\in\{\alpha_1,\dots,\alpha_s\}$ for all $1\leq j\leq n$, and
    \item for each $1\leq i\leq s$, the value $\alpha_i$ occurs among
    $\beta_1,\dots,\beta_n$ with multiplicity at least $\lambda_i$.
\end{itemize}
Equivalently,
\[
X_{n,\lambda,s}=
\overline{\bigcup_{\aalpha_s}
X_{n,\lambda,s}(\aalpha_s)\times \{\aalpha_s\}},
\]
where the union is over tuples $\aalpha_s=(\alpha_1,\dots,\alpha_s)$ with
pairwise distinct coordinates.

\begin{definition}\cite{AH}\label{defn: double-e}
    Given two families of indeterminates $x_1,\dots,x_m$ and $a_1,a_2,\dots$, define the \emph{double Tanisaki polynomial} to be
    \begin{equation}\label{eq: def-double-e}
        e_d(x_1,\dots,x_m | a_1,a_2,\dots ) := \sum_{r = 0}^d (-1)^{d-r} e_r(x_1,\dots,x_m)h_{d-r}(a_1,\dots,a_{m+1-d})
    \end{equation}
    for $d \geq 0$, and $e_i, h_i$ are the $i$th elementary and complete homogeneous symmetric polynomials respectively.
\end{definition}

\begin{definition}\label{def: ideal-jqnls}
    Let $J^\QQ_{n,\lambda,s}\subset \QQ[\xxx_n,\uuu_s]$ be the ideal generated by
    \begin{itemize}
        \item $(x_i-u_1)\cdots (x_i-u_s)$ for $1\leq i\leq n$, and
        \item $e_d(x_{i_1},\dots,x_{i_m}\mid u_{\phi_\lambda(1)},\dots,u_{\phi_\lambda(n)})$
        for $1\leq m\leq n$, $1\leq i_1<\cdots<i_m\leq n$, and
        $d>m-p_m^n(\lambda)$.
    \end{itemize}
\end{definition}

\begin{lemma}\label{lem: ideal-containment}
    We have the containment of ideals
    \[
    J^\QQ_{n,\lambda,s}\subset I(X_{n,\lambda,s}).
    \]
\end{lemma}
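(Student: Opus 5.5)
The plan is to check that every generator of $J^\QQ_{n,\lambda,s}$ vanishes on $X_{n,\lambda,s}$. A polynomial that vanishes on a dense subset vanishes on its closure. So it suffices to check vanishing at points $(\beta;\aalpha_s)$ with $\aalpha_s$ having pairwise distinct coordinates and $\beta\in X_{n,\lambda,s}(\aalpha_s)$. The first family is immediate: each $\beta_i$ equals some $\alpha_j$, so $(\beta_i-\alpha_1)\cdots(\beta_i-\alpha_s)=0$.

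For the double Tanisaki polynomials I will use a generating-function identity. By Definition~\ref{defn: double-e}, $e_d(x_1,\dots,x_m\mid a_1,a_2,\dots)$ is the coefficient of $t^d$ in
\[
\frac{\prod_{i=1}^m(1+x_it)}{\prod_{j=1}^{m+1-d}(1+a_jt)}.
\]
This follows because $1/\prod_j(1+a_jt)=\sum_q(-1)^q h_q(a)t^q$. Suppose the multiset $\{x_1,\dots,x_m\}$ contains the multiset $\{a_1,\dots,a_{m+1-d}\}$. Then the quotient is a polynomial of degree $m-(m+1-d)=d-1<d$, so the coefficient of $t^d$ is zero. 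It therefore suffices to prove the following claim. For every $S=\{i_1<\dots<i_m\}$, every $d>m-p^n_m(\lambda)$, and every point as above, the multiset $\{\beta_i:i\in S\}$ contains $\{\alpha_{\phi_\lambda(1)},\dots,\alpha_{\phi_\lambda(m+1-d)}\}$. Since $m+1-d\le p:=p^n_m(\lambda)$, it is enough to do this with $m+1-d$ replaced by $p$, because sub-multisets of a contained multiset are contained.

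Next, a counting step. Since the $\alpha_i$ are distinct, $\alpha_i$ occurs at least $\lambda_i$ times among $\beta_1,\dots,\beta_n$. The set $S$ omits only $n-m$ indices, so $\alpha_i$ occurs at least $\max(0,\lambda_i-(n-m))$ times among $\{\beta_i:i\in S\}$. Counting boxes of $\lambda$ in columns $n-m+1,\dots,n$ gives
\[
p^n_m(\lambda)=\lambda'_{n-m+1}+\dots+\lambda'_n=\sum_i\max(0,\lambda_i-(n-m)).
\]
The claim therefore reduces to the following combinatorial statement about $\phi_\lambda$: among $\phi_\lambda(1),\dots,\phi_\lambda(p)$, each value $i$ occurs exactly $\max(0,\lambda_i-(n-m))$ times.

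I expect this last step to be the main point, and I would prove it from the choice of $w_U$. In the canonical filling, $N_\Lambda$ shifts one box to the right within a row. Hence $N_\Lambda^{j}\CC^K$ is spanned by the $e_r$ whose box lies in row $\xi(r)=i$ outside the leftmost $j$ columns. Row $i$ contributes $\max(0,\Lambda_i-j)$ such vectors. Take $j=(n-k)+(n-m)$. Since $\Lambda_i=n-k+\lambda_i$, this gives $\dim N_\Lambda^{j}\CC^K=\sum_i\max(0,\lambda_i-(n-m))=p$, with row $i$ contributing exactly $\max(0,\lambda_i-(n-m))$ basis vectors. The coordinate flag $U_\bullet=w_UF_\bullet$ refines the image filtration, and $U_p=\langle e_{w_U(1)},\dots,e_{w_U(p)}\rangle$ has dimension $p$. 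Hence $U_p=N_\Lambda^{j}\CC^K$, and $\{w_U(1),\dots,w_U(p)\}$ is exactly this set of boxes. Applying $\xi$ gives the required multiplicities of $\phi_\lambda=\xi\circ w_U$ on $[p]$; note $p\le m\le n$, so $\phi_\lambda$ is defined there.

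This yields the containment of multisets, hence vanishing on the dense locus, and by closure $J^\QQ_{n,\lambda,s}\subset I(X_{n,\lambda,s})$.
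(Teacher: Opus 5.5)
Your proposal is correct and follows the paper's proof. Both use the generating-function form of $e_d(\,\cdot\mid\cdot\,)$, the bound $\max(\lambda_r-(n-m),0)$ on how many copies of $\alpha_r$ survive among $\beta_{i_1},\dots,\beta_{i_m}$ (these bounds sum to $p^n_m(\lambda)$), and the divisibility of the numerator by $\prod_{r\le m+1-d}(1+u_{\phi_\lambda(r)}q)$.

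Your two extra steps refine that argument rather than change it, and both are worthwhile:
\begin{itemize}
\item You justify the multiplicities of $\phi_\lambda$ on $[p^n_m(\lambda)]$ via $U_{p^n_m(\lambda)}=N_\Lambda^{2n-k-m}\CC^K$. The paper only asserts this ``by the definition of $\phi_\lambda$''.
\item Your reduction to pairwise distinct $\alpha_i$ through the closure description is genuinely needed. The paper's divisibility step fails at points of the bulleted description where some $\alpha$'s coincide. For example, take $n=2$, $\lambda=(1,1)$, $s=3$: the generator $x_1+x_2-u_1-u_2$ takes the nonzero value $b-a$ at $(a,b;a,a,b)$.
\end{itemize}
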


\begin{proof}
    Let $(\beta_1,\dots,\beta_n;\alpha_1,\dots,\alpha_s)\in X_{n,\lambda,s}$.
    Since each $\beta_i$ is one of the $\alpha_j$, the polynomial
    $(x_i-u_1)\cdots(x_i-u_s)$ vanishes at this point for every $i$, so the
    first family of generators lies in $I(X_{n,\lambda,s})$.

    For the second family, recall that
    \[
    e_d(x_1,\dots,x_m\mid a_1,a_2,\dots)
    =\sum_{r=0}^d (-1)^{d-r} e_r(x_1,\dots,x_m)h_{d-r}(a_1,\dots,a_{m+1-d}),
    \]
    which is the coefficient of $q^d$ in
    \[
    \frac{(1+x_1q)\cdots(1+x_mq)}{(1+a_1q)\cdots(1+a_{m+1-d}q)}.
    \]

    Fix indices $1\leq i_1<\cdots<i_m\leq n$, and let $c_{r,m}$ be the number of
    boxes in the $r$th row of $\lambda$ weakly to the right of the
    $(n-m+1)$st column of the padded diagram; equivalently,
    \[
    c_{r,m}=\max(\lambda_r-(n-m),0).
    \]
    Then
    \[
    \sum_{r=1}^s c_{r,m}=p_m^n(\lambda).
    \]
    Since each $\alpha_r$ occurs among $\beta_1,\dots,\beta_n$ at least
    $\lambda_r$ times, omitting $n-m$ of the $\beta$'s can remove at most $n-m$
    copies of $\alpha_r$. Hence among the selected coordinates
    $\beta_{i_1},\dots,\beta_{i_m}$ there are still at least $c_{r,m}$ copies of
    $\alpha_r$.

    Therefore
    \[
    \frac{(1+\beta_{i_1}q)\cdots(1+\beta_{i_m}q)}
    {(1+\alpha_1 q)^{c_{1,m}}\cdots (1+\alpha_s q)^{c_{s,m}}}
    \]
    is a polynomial in $q$ of degree $m-p_m^n(\lambda)$. Since
    $d>m-p_m^n(\lambda)$, we have $p_m^n(\lambda)\geq m+1-d$, and by the
    definition of $\phi_\lambda$ the product
    \[
    \prod_{r=1}^{m+1-d} (1+u_{\phi_\lambda(r)}q)
    \]
    divides
    \[
    (1+u_1q)^{c_{1,m}}\cdots (1+u_sq)^{c_{s,m}}.
    \]
    Hence
    \[
    \frac{(1+x_{i_1}q)\cdots(1+x_{i_m}q)}
    {\prod_{r=1}^{m+1-d}(1+u_{\phi_\lambda(r)}q)}
    \]
    has degree at most $d-1$ after evaluation on $X_{n,\lambda,s}$, so its
    $q^d$-coefficient vanishes. This shows that
    \[
    e_d(x_{i_1},\dots,x_{i_m}\mid u_{\phi_\lambda(1)},\dots,u_{\phi_\lambda(n)})
    \in I(X_{n,\lambda,s}).
    \]
\end{proof}

\begin{definition}
    For a set $\AAA$ of polynomials, define $\AAA^\uuu$ to be the set of polynomials given by
    \[
\AAA^\uuu := \{ f \cdot u_1^{a_1}\dots u_s^{a_s} : f \in \AAA, a_1,\dots,a_s \geq 0\}
\]
\end{definition} 

\begin{proposition}\label{prop: module-dim}
    We have the equality of ideals $J^\QQ_{n,\lambda,s} = I(X_{n,\lambda,s})$. The set $\AAA^\uuu_{n,\lambda,s}$ descends to a $\QQ$-basis for the common quotient ring
    \[
\QQ[\xxx_n,\uuu_s]/J^\QQ_{n,\lambda,s} = \QQ[\xxx_n,\uuu_s]/I(X_{n,\lambda,s})
\]
    As a corollary, we have that $\QQ[\xxx_n,\uuu_s]/J^\QQ_{n,\lambda,s}$ is a free $\QQ[\uuu_s]$-module of rank $|\AAA_{n,\lambda,s}|$.
\end{proposition}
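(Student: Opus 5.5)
The plan is to sandwich the two ideals. Lemma~\ref{lem: ideal-containment} already gives $J^\QQ_{n,\lambda,s}\subset I(X_{n,\lambda,s})$. It therefore suffices to prove two things: (a) $\AAA_{n,\lambda,s}$ spans $M:=\QQ[\xxx_n,\uuu_s]/J^\QQ_{n,\lambda,s}$ as a $\QQ[\uuu_s]$-module, and (b) $\AAA_{n,\lambda,s}$ is $\QQ[\uuu_s]$-linearly independent in $\QQ[\xxx_n,\uuu_s]/I(X_{n,\lambda,s})$. Given (a) and (b), take any $f\in I(X_{n,\lambda,s})$. By (a), $f\equiv \sum_{\mathbf a} c_{\mathbf a}(\uuu)\,\xxx^{\mathbf a}$ modulo $J^\QQ_{n,\lambda,s}$. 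This combination lies in $I(X_{n,\lambda,s})$ because $J^\QQ_{n,\lambda,s}\subset I(X_{n,\lambda,s})$. So by (b) all $c_{\mathbf a}=0$, and hence $f\in J^\QQ_{n,\lambda,s}$. This yields the equality of ideals. It also shows that $\AAA^\uuu_{n,\lambda,s}$ is a $\QQ$-basis of the common quotient, and that the quotient is free over $\QQ[\uuu_s]$ of rank $|\AAA_{n,\lambda,s}|$.

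For (a), I would use graded Nakayama (Corollary~\ref{cor: graded-nakayama-surj}) with $R=\QQ[\uuu_s]$ and $R_+=(u_1,\dots,u_s)$. All generators of $J^\QQ_{n,\lambda,s}$ are homogeneous in $(\xxx,\uuu)$ with $\deg x_i=\deg u_j=1$, so $M$ is a graded $R$-module. It is finitely generated, since each $x_i$ is integral over $R$ via $(x_i-u_1)\cdots(x_i-u_s)$; hence the monomials $\xxx^{\mathbf a}$ with all $a_i<s$ generate $M$. Setting $\uuu=0$ sends $(x_i-u_1)\cdots(x_i-u_s)\mapsto x_i^s$ and $e_d(x_{i_1},\dots,x_{i_m}\mid u_{\phi_\lambda(1)},\dots)\mapsto e_d(x_{i_1},\dots,x_{i_m})$, since every term with $r<d$ carries a positive-degree factor $h_{d-r}$ in the $u$'s. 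Therefore $M/R_+M\cong R_{n,\lambda,s}$. By Theorem~\ref{thm: griffin-basis}, $\AAA_{n,\lambda,s}$ spans this quotient, so the map $R^{\AAA_{n,\lambda,s}}\to M$ is surjective modulo $R_+$, and hence surjective.

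For (b), suppose $g=\sum_{\mathbf a} c_{\mathbf a}(\uuu)\,\xxx^{\mathbf a}\in I(X_{n,\lambda,s})$ with $c_{\mathbf a}\in\QQ[\uuu_s]$. Fix any $\aalpha_s$ with pairwise distinct coordinates. Because $X_{n,\lambda,s}(\aalpha_s)\times\{\aalpha_s\}\subset X_{n,\lambda,s}$, the specialization $g(\xxx,\aalpha_s)$ lies in $I(X_{n,\lambda,s}(\aalpha_s))$. By Griffin's theorem, $\AAA_{n,\lambda,s}$ is a basis of $\QQ[\xxx_n]/\gr I(X_{n,\lambda,s}(\aalpha_s))$, so Lemma~\ref{lem: basis-gr-imply-basis} makes it a basis of $\QQ[\xxx_n]/I(X_{n,\lambda,s}(\aalpha_s))$. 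Consequently $c_{\mathbf a}(\aalpha_s)=0$ for every $\mathbf a$. Tuples with distinct coordinates are Zariski dense in $\QQ^s$ (complement of the hyperplanes $u_i=u_j$, with $\QQ$ infinite), so each $c_{\mathbf a}=0$.

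I expect the main obstacle to be the careful identification $M/R_+M\cong R_{n,\lambda,s}$ in step (a). One must check that specializing the double Tanisaki generators over all subsets and all $d>m-p^n_m(\lambda)$ recovers exactly Griffin's generating set $(x_1^s,\dots,x_n^s)+(e_d(S))$, and not merely a subset of it. The homogeneity and finite generation needed for Nakayama are routine by comparison.
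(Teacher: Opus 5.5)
Your proof is correct and is essentially the paper's. Step (b) matches the paper exactly: specialize at distinct $\aalpha_s$, apply Griffin's basis theorem with Lemma~\ref{lem: basis-gr-imply-basis}, and conclude by density. Your graded-Nakayama argument for (a) is a packaged form of the paper's explicit induction on $\xxx$-degree, which rewrites $m_\xxx$ using Griffin's basis and replaces each $\overline g=g(\xxx_n,0)$ by $g$ at the cost of lower $\xxx$-degree terms.

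The step you flag as the main obstacle is immediate. Setting $\uuu=0$ sends the generators of $J^\QQ_{n,\lambda,s}$ term-by-term onto Griffin's generators of $I_{n,\lambda,s}$, over the same index set. For spanning you only need the containment $J^\QQ_{n,\lambda,s}|_{\uuu=0}\supseteq I_{n,\lambda,s}$ anyway. Your sandwich argument for $I(X_{n,\lambda,s})\subseteq J^\QQ_{n,\lambda,s}$ simply spells out what the paper compresses into ``both quotients have the same basis.''
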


\begin{proof}
    We adapt the argument given in Chou-Matsumura-Rhoades \cite{CMR}.
    
    We first establish linear independence of $\AAA^\uuu_{n,\lambda,s}$ modulo the ideal $I(X_{n,\lambda,s})$. By lemma \ref{lem: ideal-containment}, this will imply linear independence of $\AAA^\uuu_{n,\lambda,s}$ modulo $J^\QQ_{n,\lambda,s}$.
    Assume the contrary, and that $\AAA^\uuu_{n,\lambda,s}$ is not linearly independent modulo $I(X_{n,\lambda,s})$. Then, there exist polynomials $f_a(\uuu_s) \in \QQ[\uuu_s]$ such that
    \[
\sum_{a \in \AAA_{n,\lambda,s}} f_a(\uuu_s) \cdot a \in I(X_{n,\lambda,s})
\]
    If $\aalpha_s \in \QQ^s$ has distinct coordinates, we may apply the evaluation map $\varepsilon_{\aalpha_s}: u_i \mapsto \alpha_i$ to obtain
    \[
\sum_{a \in \AAA_{n,\lambda,s}} f_a(\aalpha_s) \cdot a \in I(X_{n,\lambda,s}(\aalpha_s))
\]
    by Theorem \ref{thm: griffin-basis}, and Lemma \ref{lem: basis-gr-imply-basis}, we must have that $\AAA_{n,\lambda,s}$ is linearly independent modulo $I(X_{n,\lambda,s}(\aalpha_s))$. Since the collection of tuples $\aalpha_s \in \QQ^s$ with distinct coordinates is dense, we conclude that $f_a(\uuu_s) = 0$ for all $a \in \AAA_{n,\lambda,s}$. This proves that $\AAA^\uuu_{n,\lambda,s}$ is linearly independent modulo $I(X_{n,\lambda,s})$.

    Next, we prove that $\AAA^\uuu_{n,\lambda,s}$ spans $\QQ[\xxx_n,\uuu_s]/J^\QQ_{n,\lambda,s}$, which will imply by lemma \ref{lem: ideal-containment} that $\AAA^\uuu_{n,\lambda,s}$ spans $\QQ[\xxx_n,\uuu_s]/I(X_{n,\lambda,s})$. Consider a monomial $m = m_\xxx m_\uuu \in \QQ[\xxx_n,\uuu_s]$, where $m_\xxx \in \QQ[\xxx_n], m_\uuu \in \QQ[\uuu_s]$. We induct on the $\xxx$-degree of $m_\xxx$.
    If $\deg(m_\xxx) = 0$, then $m \in \QQ[\uuu_s]$, and this is clear, since $1 \in \AAA_{n,\lambda,s}$. If $\deg(m_\xxx) > 0$, by theorem \ref{thm: griffin-basis}, we may write
    \begin{equation}\label{eq: mx-expansion}
        m_\xxx = \sum_{a \in \AAA_{n,\lambda,s}} c_a \cdot a + \sum_{g} h_g \cdot \overline{g}
    \end{equation}
    where $c_a \in \QQ$, $g \in J^\QQ_{n,\lambda,s}$ denotes a generator given in Definition \ref{def: ideal-jqnls}, and $\overline{g} =g(\xxx_n,0)$ the result of evaluating the $u$-variables in $g$ at $0$. Since equation \ref{eq: mx-expansion} does not involve $\uuu_s$, we may assume that $c_a = 0$ if $\deg_\xxx(a) \neq \deg_\xxx(m_\xxx)$, and that $h_g \cdot \overline{g}$ is homogeneous of degree $\deg_\xxx(m_\xxx)$. We then multiply through by $m_\uuu$ to obtain
    \begin{align}
    m = m_\xxx m_\uuu & = \sum_{a \in \AAA_{n,\lambda,s}} c_a \cdot a \cdot m_\uuu + \sum_{g} h_g \cdot \overline{g} \cdot m_\uuu \\
    & = \sum_{a \in \AAA_{n,\lambda,s}} c_a \cdot a \cdot m_\uuu + \sum_{g} h_g \cdot g \cdot m_\uuu + \sum_{g} h_g \cdot (\overline{g} - g) \cdot m_\uuu
    \end{align}
    noting that $h_g \cdot g \cdot m_\uuu \in J^\QQ_{n,\lambda,s}$, $ a \cdot m_\uuu \in \AAA^\uuu_{n,\lambda,s}$, and that $\deg_\xxx(\overline{g} - g) < \deg_\xxx(m_\xxx)$, the claim follows by induction.

    This implies the natural surjection
    \[
    \QQ[\xxx_n,\uuu_s]/J_{n,\lambda,s}^\QQ \twoheadrightarrow \QQ[\xxx_n,\uuu_s]/I(X_{n,\lambda,s})
    \]
    is an isomorphism, since both quotients have the same basis.
\end{proof}

\section{Borel Presentation of $H^*_U(Y_{n,\lambda,s})$}\label{sec:geometry}

To make sense of a discussion of $H^*_U(Y_{n,\lambda,s})$, we must first prove that the torus $U$ acts on $Y_{n,\lambda,s}$.

\begin{lemma}\label{lem:UactsonYnls}
    The torus $U$ acts on the variety $Y_{n,\lambda,s}$.
\end{lemma}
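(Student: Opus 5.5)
The plan is to show that every element of $U$ commutes with $N_\Lambda$ and preserves the subspace $N_\Lambda^{n-k}\CC^K$. Then $U$, acting on $\Fl_{1^n}(\CC^K)$ by $g\cdot V_\bullet=(gV_1\subset\cdots\subset gV_n)$, maps $Y_{n,\lambda,s}$ into itself. The action is the restriction of the algebraic action of $(\CC^\times)^K\subset GL_K$ on $\Fl_{1^n}(\CC^K)$, so it is automatically algebraic. Only stability of the defining conditions needs checking.

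\textbf{Step 1 (commutation).} Fix $g=\operatorname{diag}(u_{\xi(1)},\dots,u_{\xi(K)})\in U$. By construction, $N_\Lambda$ sends $e_i$ either to $0$ or to $e_j$, where $j$ is the entry directly to the right of $i$ in the canonical filling $P$. In particular $i$ and $j$ lie in the same row of $P$, so $\xi(i)=\xi(j)$. Then
\[
gN_\Lambda e_i = u_{\xi(j)}e_j = u_{\xi(i)}e_j = N_\Lambda g e_i,
\]
and when $N_\Lambda e_i=0$ both sides vanish. Hence $gN_\Lambda=N_\Lambda g$ on a basis, so $g$ commutes with $N_\Lambda$ and with every power $N_\Lambda^r$.

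\textbf{Step 2 (stability of the conditions).} Let $V_\bullet\in Y_{n,\lambda,s}$. First, $N_\Lambda(gV_i)=g(N_\Lambda V_i)\subset gV_i$ for $1\leq i\leq n$. Second, $N_\Lambda^{n-k}\CC^K=N_\Lambda^{n-k}g\CC^K=gN_\Lambda^{n-k}\CC^K\subset gV_n$. Dimensions are clearly preserved, so $gV_\bullet\in Y_{n,\lambda,s}$.

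It follows that $U$ preserves the closed subvariety $Y_{n,\lambda,s}$. If one uses the reduced-structure description, it also preserves the equivalent shifted conditions $N_\Lambda V_i\subset V_{i-1}$ by the same computation. There is no real obstacle here. The only point requiring care is Step 1, namely that $N_\Lambda$ moves basis vectors only within a row of $P$, which is exactly why $U$ was defined with weights constant along rows.
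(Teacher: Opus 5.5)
Your proof is correct and follows essentially the same route as the paper: check $gN_\Lambda=N_\Lambda g$ on basis vectors, using that $N_\Lambda$ only moves $e_i$ within its row of $P$, and then carry the defining conditions of $Y_{n,\lambda,s}$ through $g$. The only cosmetic difference is that the paper checks the shifted conditions $N_\Lambda V_i\subset V_{i-1}$, whereas you check the original conditions $N_\Lambda V_i\subset V_i$ directly, which makes the borrowed equivalence of the two descriptions unnecessary.
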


\begin{proof}[Proof of Lemma~\ref{lem:UactsonYnls}]
    Let $u\in U$. We first show that $u$ commutes with $N_\Lambda$.
    Since both are linear maps on $\CC^K$, it suffices to check this on the
    basis vectors $e_1,\dots,e_K$. Fix a row $j$ of the tableau $P$, and let
    $i_1>i_2>\cdots>i_\ell$ be the entries in the $j$th row of $P$. Then
    $N_\Lambda(e_{i_r})=e_{i_{r+1}}$ whenever $i_r$ is not on the right edge of
    $\Lambda$, and $u(e_{i_r})=u_j e_{i_r}$ for all $r$ because $u$ is constant
    on rows. Hence
    \[
    uN_\Lambda(e_{i_r})=u(e_{i_{r+1}})=u_j e_{i_{r+1}}
    =N_\Lambda(u_j e_{i_r})=N_\Lambda u(e_{i_r}).
    \]
    Thus $uN_\Lambda=N_\Lambda u$.

    Now let $V_\bullet\in Y_{n,\lambda,s}$. Using the equivalent shifted
    description of $Y_{n,\lambda,s}$ recalled above, we have
    $N_\Lambda V_i\subset V_{i-1}$ for $1\leq i\leq n$, and also
    $N_\Lambda^{n-k}\CC^K\subset V_n$. Therefore
    \[
    N_\Lambda(uV_i)=uN_\Lambda(V_i)\subset uV_{i-1}
    \]
    and
    \[
    N_\Lambda^{n-k}\CC^K
    =N_\Lambda^{n-k}(u\CC^K)
    =uN_\Lambda^{n-k}\CC^K
    \subset uV_n.
    \]
    Hence $uV_\bullet\in Y_{n,\lambda,s}$, so $U$ acts on $Y_{n,\lambda,s}$.
\end{proof}

We state our main theorem:

\begin{theorem}\label{thm: main}
    Let the ideal $J_{n,\lambda,s} \subset \QQ[\xxx_n,\uuu_s]$ be generated by the relations 
    \begin{itemize}
        \item $(x_i - u_1)\dots(x_i - u_s)$ for $1 \leq i \leq n$
        \item $e_d(x_{i_1},\dots,x_{i_m} | u_{\phi_\lambda(1)},\dots,u_{\phi_\lambda(n)})$ for $1 \leq m \leq n$, $1 \leq i_1 < \dots < i_m \leq n$, and $d > m - p^n_m(\lambda)$
    \end{itemize}
    Then there is a ring isomorphism
    \begin{equation}\label{eq: maintheorem}
        H^*_U(Y_{n,\lambda,s}) \cong \QQ[\xxx_n,\uuu_s]/J_{n,\lambda,s}
    \end{equation}
\end{theorem}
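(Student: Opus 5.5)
We follow the three-stage strategy from the introduction and build a surjection $\QQ[\xxx_n,\uuu_s]\to H^*_U(Y_{n,\lambda,s})$ whose kernel contains $J_{n,\lambda,s}$. The resulting map from $\QQ[\xxx_n,\uuu_s]/J_{n,\lambda,s}$ is then compared with Proposition~\ref{prop: module-dim} by a rank count. Concretely, on $Y_{n,\lambda,s}\subset \Fl_{1^n}(\CC^K)$ we have tautological $U$-equivariant bundles $\mathcal V_1\subset\cdots\subset\mathcal V_n$. Set $x_i:=c_1^U(\mathcal V_i/\mathcal V_{i-1})$ (or its negative, whichever sign convention makes $x_r\mapsto u_{\phi_\lambda(r)}$ at the fixed point indexed by $w_U$). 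Let $u_j$ be the generators of $H^*_U(pt)$.

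\textbf{Step 1 (freeness and surjectivity).} The Griffin--Levinson--Woo affine paving of $Y_{n,\lambda,s}$ is by cells that are orbits of a unipotent group commuting with $N_\Lambda$ and normalized by the diagonal torus, so it is $U$-stable. Lemma~\ref{lem: aff-paving-induces-basis} then shows that $H^*_U(Y_{n,\lambda,s})$ is a free $\QQ[\uuu_s]$-module of rank $\dim H^*(Y_{n,\lambda,s})=\dim R_{n,\lambda,s}=|\AAA_{n,\lambda,s}|$, using \cite{GLW} and Theorem~\ref{thm: griffin-basis}.

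For surjectivity we use the GLW nonequivariant result that $x_1,\dots,x_n$ generate $H^*(Y_{n,\lambda,s})$; this is obtained there via restriction from $Y_{n,\emptyset,s}$ using Lemma~\ref{lem: rel-paving-surj}. Corollary~\ref{cor: graded-nakayama-surj}, applied as in Remark~\ref{rem: graded-nakayama-application}, lifts this to the surjection $\QQ[\xxx_n,\uuu_s]\twoheadrightarrow H^*_U(Y_{n,\lambda,s})$. Lemma~\ref{lem: rel-paving-equiv-surj} gives the equivariant restriction from the ambient space if needed.

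\textbf{Step 2 (first family of relations).} We realize $Y_{n,\emptyset,s}$ as an iterated projective bundle. Here $\Lambda=(n^s)$ and $N_\Lambda^{n}=0$, so the condition reads $N_\Lambda V_i\subset V_{i-1}$. Hence $V_i/V_{i-1}$ is a line in $\ker(N_\Lambda$ acting on $\CC^K/V_{i-1})$ in the appropriate sense, and the successive choices form a projective bundle over $Y$ of the previous step. At each stage the relevant rank-$s$ bundle is $U$-equivariantly filtered with weights $u_1,\dots,u_s$: modulo $V_{i-1}$, the kernel of $N_\Lambda$ restricted to the preimage is an extension of pieces carrying the characters of the $s$ rows. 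Its equivariant Chern polynomial is therefore $\prod_j(1+u_j)$ in a suitable filtration, and the projective bundle formula gives $\prod_j(x_i-u_j)=0$ on $Y_{n,\emptyset,s}$. These relations then pull back to $Y_{n,\lambda,s}\subset Y_{n,\emptyset,s}$ after embedding both $U$-equivariantly in a common $\Fl_{1^n}(\CC^{K'})$. As a sanity check, the relations must hold at every fixed point, where $x_i\mapsto u_{\xi(w(i))}$.

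\textbf{Step 3 (double Tanisaki relations).} We follow Abe--Horiguchi. Consider $Y_{n,\lambda,s}\to\Gr(m,K)$, $V_\bullet\mapsto V_m$. Using $N_\Lambda V_i\subset V_{i-1}$, we show that $V_m$ lies in a Schubert variety $X_{\mu_0}(U_\bullet)$ relative to the flag $U_\bullet=w_UF_\bullet$ refining the $N_\Lambda$-image filtration. The key estimate is $\dim(V_m\cap N_\Lambda^j\CC^K)\ge$ (an explicit count involving $p^n_m(\lambda)$), proved by induction on $m$.

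Next we identify $e_d(x_1,\dots,x_m\mid u_{\phi_\lambda(1)},\dots)$ as the restriction, along $H^*_{T}(\Gr)\to H^*_U(\Gr)$, of the equivariant Schubert class of the opposite Schubert variety for a partition $\nu$ with $\nu\not\subset\mu_0^*$. This uses the identification of double Tanisaki polynomials with equivariant Chern classes of $\CC^K/\mathcal V_m$ minus a trivial sub-bundle with weights $u_{\phi_\lambda(1)},\dots,u_{\phi_\lambda(m+1-d)}$, via the quotient expansion in the proof of Lemma~\ref{lem: ideal-containment}. Lemma~\ref{lem: grass-intersect} then gives disjointness, so the class vanishes on $Y_{n,\lambda,s}$, which handles initial segments $\{1,\dots,m\}$.

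To pass to arbitrary subsets $\{i_1<\cdots<i_m\}$, we lift the GLW $\symm_n$-action to $H^*_U$. Since $H^*_U$ is free and injects into $H^*_U$ of the fixed points, we define the action on the localization side by permuting $x$-indices of fixed words. We then check that it preserves the image, via Nakayama and the nonequivariant action. This lifting is the step I expect to be the main obstacle, along with getting the Schubert containment exactly right.

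\textbf{Step 4 (conclusion).} We now have a surjection $\QQ[\xxx_n,\uuu_s]/J_{n,\lambda,s}\twoheadrightarrow H^*_U(Y_{n,\lambda,s})$ of graded $\QQ[\uuu_s]$-modules, both free of rank $|\AAA_{n,\lambda,s}|$ by Proposition~\ref{prop: module-dim} and Step 1. Since $J_{n,\lambda,s}=J^\QQ_{n,\lambda,s}$, the two sides have equal Hilbert series, because the $\QQ[\uuu_s]$-bases of both reduce to the graded basis $\AAA_{n,\lambda,s}$ of $R_{n,\lambda,s}$. A degreewise surjection between spaces of equal finite dimension is an isomorphism.
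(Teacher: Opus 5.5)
Apart from lifting the $\symm_n$-action, your plan follows the paper's proof. Freeness comes from the $U$-stable GLW paving. Family 1 comes from the iterated $\PP^{s-1}$-bundle $Y_{n,\emptyset,s}$ and restriction. Family 2 for $\{1,\dots,m\}$ comes from $N_\Lambda^{2n-k-m}\CC^K\subset V_m$ and the disjointness $X_{\mu_0}(U_\bullet)\cap X_{(1^d)}(w_U\widetilde{F}_\bullet)=\varnothing$. The rank count finishes.

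Two small corrections. First, $c^U(E)=\prod_j(1+u_j)$ is cleanest from the exact sequence $0\to E\to\CC^K/\mathcal V_{i-1}\xrightarrow{N_\Lambda}\operatorname{im}N_\Lambda/\mathcal V_{i-1}\to 0$. Second, the double Tanisaki polynomial is $c_d^U(\mathcal V_m-\mathcal A)$, where $\mathcal A$ is trivial of rank $m+1-d$ with weights $u_{\phi_\lambda(1)},\dots,u_{\phi_\lambda(m+1-d)}$; it is not a class built from $\CC^K/\mathcal V_m$.

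For the $\symm_n$-lift, the paper permutes $x_1,\dots,x_n$ on $A=H_U^*(\Fl_{(1^n)}(\CC^K))$ and descends through $I=\ker q^*$ by graded Nakayama; you localize instead. Your closing check ``via Nakayama and the nonequivariant action'' cannot work. Stability of the nonequivariant kernel only gives $\sigma(I)\subseteq I+S_+A$, and this does not force $\sigma(I)\subseteq I$. For example, take the $T$-fixed point $\langle e_1\rangle\in\Fl_2$. The nonequivariant kernel $H^{>0}(\Fl_2)$ is $\symm_2$-stable. But $\ker\bigl(H_T^*(\Fl_2)\to H_T^*(pt)\bigr)=(x_1-t_1)$ is not, since $x_2-t_1\mapsto t_2-t_1$. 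The paper's proof of Proposition~\ref{prop:Sn-action-equiv} has the same gap: $(\sigma(I)+I+S_+A)/(I+S_+A)$ is the image of $M$ in $B/S_+B$, not $M/S_+M$.

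Your localization idea does work once you add three things.

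(i) $Y_{n,\lambda,s}^U$ is finite. A $U$-fixed $V_i$ is the direct sum of its intersections with the rows $W_j$ of $P$, because these carry the distinct characters $u_j$. Each $V_i\cap W_j$ is $N_\Lambda$-stable inside a single Jordan block, so it equals $\ker(N_\Lambda|_{W_j})^a$ for some $a$. Hence the fixed points are exactly the coordinate flags of admissible $w$. They are labeled bijectively by row words $v=\xi\circ w\in[s]^n$ with $\#v^{-1}(j)\ge\lambda_j$, and $x_i|_v=u_{v_i}$. This contradicts the paper's closing remark, which overlooks the $N_\Lambda$-stability.

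(ii) Let $\sigma$ permute the positions of $v$, not of $w$, since $w\circ\sigma$ is usually not admissible.

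(iii) Localization is injective by freeness, and $\QQ[\xxx_n,\uuu_s]\to H_U^*(Y_{n,\lambda,s})$ is onto by your Step 1. So the image is the $\QQ[\uuu_s]$-algebra generated by the tuples $(u_{v_i})_v$. It is visibly stable under $\sigma x_i=x_{\sigma(i)}$, and no Nakayama is needed.

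In fact (i) makes Steps 2 and 3 unnecessary. The kernel of $\QQ[\xxx_n,\uuu_s]\to\bigoplus_v\QQ[\uuu_s]$ is the ideal of $\bigcup_v\{x_i=u_{v_i}\}=X_{n,\lambda,s}$. By Proposition~\ref{prop: module-dim}, that ideal is $J_{n,\lambda,s}$. What the paper's longer route buys is a geometric origin for each family of relations.
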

We refer to the relations for $J_{n,\lambda,s}$ in the first bullet point as \emph{family 1}, and the relations in the second as \emph{family 2}. Note that evaluating $u_1 = \dots = u_s = 0$ recovers the Griffin ideal $I_{n,\lambda,s}$.

\subsection{Affine pavings}

We first recount the description of the affine paving of $Y_{n,\lambda,s}$ defined by Griffin-Levinson-Woo in \cite{GLW}. An application of a few standard lemmas give the $H^*_U(\bullet)$-module dimension of $H^*_U(Y_{n,\lambda,s})$.

\begin{definition}\cite{GLW}\label{def: P-admissible}
    Given an injective map $w: [n] \to [K]$, we say that $w$ is \emph{admissible} with respect to a tableau $P$ if the following hold:
    \begin{itemize}
        \item $[k] \subset \im(w)$
        \item For $i \leq n$, if $w(i) = P(a,b)$ for $(a,b) \not\in \edge[\Lambda]$, then $P(a,b+1) \in \{w(1),\dots,w(i-1)\}$. 
    \end{itemize}
\end{definition}

The collection of admissible injective maps give a criteria for which Schubert cells $C_w$ of $\Fl_{(1^n)}(\CC^K)$ have nonempty intersection with $Y_{n,\lambda,s}$. In fact, we have more:

\begin{theorem}\cite{GLW}
    The cells $\{ C_w \cap Y_{n,\lambda,s} : w \text{ is admissible with respect to }P\}$ form an affine paving of $Y_{n,\lambda,s}$.
\end{theorem}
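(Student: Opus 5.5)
The plan is to follow the Spaltenstein--Shimomura strategy for Springer fibers: show that $Y_{n,\lambda,s}$ meets each Schubert cell in an affine space or in the empty set, and then filter by closures of Schubert cells. I would fix Schubert cells $C_w\subset \Fl_{1^n}(\CC^K)$, indexed by injections $w:[n]\to[K]$, relative to the standard flag in the basis $e_1,\dots,e_K$. The labelling by the canonical filling $P$ is chosen so that $N_\Lambda$ is ``upper triangular'' with respect to this basis order: $N_\Lambda e_i$ is either $0$ or $e_j$ with $j<i$, since entries increase from right to left along rows.

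First I would fix the filtration. Take a linear extension of the Bruhat order on injections $[n]\to[K]$ and set $X_r=\bigcup_{j\le r}\overline{C_{w_j}}\cap Y_{n,\lambda,s}$. Each $X_r$ is closed, and each difference $X_r\setminus X_{r-1}$ equals $C_{w_r}\cap Y_{n,\lambda,s}$. It therefore suffices to prove two things: $C_w\cap Y_{n,\lambda,s}$ is an affine space when $w$ is admissible, and it is empty otherwise. Since $U$ is contained in the diagonal torus, the cells are $U$-stable, which gives the $U$-stable affine paving needed for Lemma~\ref{lem: aff-paving-induces-basis}.

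Next I would parametrize the cell inductively. A point of $C_w$ is $V_i=V_{i-1}+\CC v_i$, where
\[
v_i=e_{w(i)}+\sum c_{i,j}e_j .
\]
Here $j$ ranges over indices $j<w(i)$ not in $\{w(1),\dots,w(i-1)\}$, and the coordinates $c_{i,j}$ are free. I would impose the shifted condition $N_\Lambda V_i\subset V_{i-1}$ from \cite[Remark~3.2]{GLW}, which amounts to $N_\Lambda v_i\in V_{i-1}$.

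The leading term of $N_\Lambda v_i$ is $N_\Lambda e_{w(i)}$, which is $e_{P(a,b+1)}$ when $w(i)=P(a,b)$ is not on the right edge of $\Lambda$. Comparing pivots in the reduced echelon basis of $V_{i-1}$ forces $P(a,b+1)\in\{w(1),\dots,w(i-1)\}$; this is the second admissibility condition. The remaining conditions are linear in the new coordinates $c_{i,\ast}$, with coefficients polynomial in the earlier coordinates.

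The key claim is that this linear system in $c_{i,\ast}$ is triangular, with pivots exactly at the coordinates $c_{i,j}$ such that $N_\Lambda e_j$ lands on a non-pivot index. The number of free coordinates would then depend only on $w$ and $i$, not on the earlier coordinates. This exhibits $C_w\cap\{N_\Lambda V_i\subset V_{i-1}\}$ as a tower of affine bundles with constant fiber dimension. Each bundle in the tower is a graph of polynomial maps over affine space, hence trivial, so the tower is an affine space.

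Finally I would treat the condition $N_\Lambda^{n-k}\CC^K\subset V_n$. The subspace $N_\Lambda^{n-k}\CC^K$ is spanned by $e_1,\dots,e_k$, which are the boxes of $[\lambda]$, because entries $1,\dots,k$ fill the rightmost columns. Since the cell's $V_n$ has pivots $\im(w)$ and echelon form, this condition is equivalent to $[k]\subset\im(w)$, and it then holds automatically on the whole cell. For this I would check that no coordinate $c_{i,j}$ with $j\le k$ survives, because the indices $1,\dots,k$ are pivots. This gives the first admissibility condition.

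The main obstacle is the constancy of the fiber dimension, i.e.\ showing that the rank of the linear system in $c_{i,\ast}$ does not jump as the earlier coordinates vary. I would first try to prove it by showing that $N_\Lambda$ maps the non-pivot basis vectors below $w(i)$ injectively to distinct indices, so each equation has a distinct leading variable. Failing that, I would reduce to the Jordan-block-by-row structure, where the rows of $P$ are independent $N_\Lambda$-strings, as Shimomura does for the ordinary Springer fiber.
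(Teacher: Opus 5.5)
The paper gives no proof of this statement; it is imported from \cite{GLW}, so your argument has to stand on its own. The architecture is right. You filter by Schubert cells in Bruhat order, use the normal form $v_i=e_{w(i)}+\sum_{j\in J_i}c_{i,j}e_j$ with $J_i=\{j<w(i)\}\setminus W_{i-1}$ and $W_{i-1}=\{w(1),\dots,w(i-1)\}$, and impose $N_\Lambda v_i\in V_{i-1}$ one step at a time. Your guess for which $c_{i,j}$ get solved for is also correct. The gap is that the step you call the ``main obstacle'' is the whole content of the theorem, and your proposed fix misses the real difficulty.

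Write $j^+$ for the label immediately right of $j$ in $P$ (so $N_\Lambda e_j=e_{j^+}$), and $m^-$ for the label immediately left of $m$. $V_{i-1}$ is a graph $y_m=\ell_m(y_{W_{i-1}})$ over its pivot coordinates, where $\ell_m$ involves only pivots larger than $m$ and has coefficients polynomial in the earlier $c$'s. So $N_\Lambda v_i\in V_{i-1}$ amounts to one equation $(N_\Lambda v_i)_m=\ell_m(\dots)$ for each non-pivot $m$. Injectivity of $j\mapsto j^+$ is automatic and only shows that distinct determined variables sit in distinct equations. The real danger is a non-pivot $m$ with $(N_\Lambda v_i)_m\equiv 0$ while $\ell_m$ involves free coordinates. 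That is a linear constraint on the free variables with coefficients depending on the earlier coordinates, exactly where the rank could jump, and nothing in your proposal excludes it.

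Three facts exclude it.

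(a) For the canonical filling, $j\mapsto j^+$ is order-preserving, because columns are filled right to left and each top to bottom. This, rather than the inequality $j^+<j$ you cite, is what makes $e_{w(i)^+}$ the leading term of $N_\Lambda v_i$.

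(b) Admissibility at steps $1,\dots,i-1$ implies that $W_{i-1}$ is closed under $j\mapsto j^+$. Hence no non-pivot $m$ has $m^-\in W_{i-1}$. Also $m^-\neq w(i)$, since $w(i)^+$, if it exists, is a pivot. So either $m^-\in J_i$, or $m^-$ is absent or larger than $w(i)$. In the first case the equation reads $c_{i,m^-}=\ell_m(\dots)$, whose right side involves only the constant $1$ and free coordinates (the $c_{i,j}$ with $j^+\in W_{i-1}$).

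(c) In the second case, every pivot $w(l)>m$ also has $w(l)^-$ absent or larger than $w(i)$. This follows from (a) and the fact that the first column carries the largest labels. So both sides of the equation vanish.

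With (a)--(c), each step is the graph of a polynomial map on a set of free coordinates that depends only on $w$, and your tower argument closes.

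Separately, your check that no $c_{i,j}$ with $j\le k$ survives is false. Take $\lambda=(1)$, $n=s=2$ (so $K=3$, and the rows of $P$ are $(2,1)$ and $(3)$) and $w=(3,1)$. Then $C_w\cap Y_{2,(1),2}=\{\langle e_3+ce_1\rangle\subset\langle e_1,e_3\rangle : c\in\CC\}\cong\CC$, and $c=c_{1,1}$ survives. Your conclusion still holds, for the simpler reason that $\dim(V_n\cap\langle e_1,\dots,e_k\rangle)=|\im(w)\cap[k]|$ is constant on $C_w$.
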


We will also need the following fact:

\begin{lemma}\cite{GLW}\label{lem: wsandAnls}
    We have that $|\{w \text{ is admissible with respect to }P\}| = |\AAA_{n,\lambda,s}|$.
\end{lemma}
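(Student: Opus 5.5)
The plan is a direct bijection between admissible maps and the finite point locus $X_{n,\lambda,s}(\aalpha_s)$, combined with Griffin's results. For a fixed tuple $\aalpha_s$ of distinct rationals, Griffin's theorem gives $R_{n,\lambda,s}\cong \QQ[\xxx_n]/\gr I(X_{n,\lambda,s}(\aalpha_s))$. Orbit harmonics then gives
\[
\dim_\QQ R_{n,\lambda,s}=|X_{n,\lambda,s}(\aalpha_s)|.
\]
By Theorem~\ref{thm: griffin-basis}, the left side equals $|\AAA_{n,\lambda,s}|$. So it suffices to biject admissible maps $w$ with points of $X_{n,\lambda,s}(\aalpha_s)$. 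Such points are exactly words $(j_1,\dots,j_n)\in[s]^n$ in which each letter $j$ occurs at least $\lambda_j$ times.

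Define $\Psi(w):=(\xi(w(1)),\dots,\xi(w(n)))$, which records the row of $P$ containing each chosen entry. The key step is to understand admissibility row by row. The second condition in Definition~\ref{def: P-admissible} says that a box of row $a$ can be chosen only after the box immediately to its right has already been chosen. Hence, within each row, the boxes in $\im(w)$ form a right-justified segment, and they are selected strictly from right to left. Consequently $w$ is determined by $\Psi(w)$: the $t$-th occurrence of the letter $j$ in the word must be sent to the $t$-th box from the right edge of row $j$.

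Next I check that the image of $\Psi$ is precisely the required set of words. Since $P$ fills rows right to left, the entries $1,\dots,k$ occupy exactly the boxes of $[\lambda]$, and these are the rightmost $\lambda_j$ boxes of row $j$. Because $\im(w)$ is right-justified in each row, the condition $[k]\subset\im(w)$ holds if and only if row $j$ is used at least $\lambda_j$ times. So $\Psi$ lands in $X_{n,\lambda,s}(\aalpha_s)$, identifying letter $j$ with $\alpha_j$.

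Conversely, take a word in which letter $j$ occurs $c_j\geq\lambda_j$ times. Then
\[
c_j\le n-\sum_{i\ne j}\lambda_i=n-k+\lambda_j=\Lambda_j,
\]
so row $j$ has enough boxes. The right-to-left filling therefore defines an injective $w$, which is admissible by construction. Thus $\Psi$ is a bijection.

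The main point requiring care is the claim that admissibility is exactly "right-to-left filling within each row." In particular, the edge condition $(a,b)\in\edge[\Lambda]$ must mean the rightmost box of the row, which matches the convention in the definition of $N_\Lambda$. Once that is checked, the count follows.

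As a cross-check, there is a geometric route giving the same equality. The GLW paving yields $\dim H^*(Y_{n,\lambda,s})=\#\{\text{admissible }w\}$, since the odd cohomology vanishes by Lemma~\ref{lem: aff-paving-induces-basis} and each cell contributes one class. Combining this with the GLW isomorphism $H^*(Y_{n,\lambda,s})\cong R_{n,\lambda,s}$ and Theorem~\ref{thm: griffin-basis} again gives $|\AAA_{n,\lambda,s}|$.
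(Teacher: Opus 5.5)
Your proof is correct, but your main argument is not the one in the paper. The paper's proof is essentially your closing cross-check. It cites the GLW identity $\Hilb(H^*(Y_{n,\lambda,s}),q)=\Hilb(R_{n,\lambda,s},q)$ and sets $q=1$, implicitly using two facts: the number of cells equals $\dim_\QQ H^*(Y_{n,\lambda,s})$, and $\AAA_{n,\lambda,s}$ is a basis of $R_{n,\lambda,s}$.

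Your main route never touches the cohomology of $Y_{n,\lambda,s}$. You observe that the second admissibility condition forces the chosen boxes in each row of $P$ to form a right-justified segment, filled from the edge inward. Hence $w\mapsto(\xi(w(1)),\dots,\xi(w(n)))$ is a bijection onto words in $[s]^n$ in which each $j$ occurs at least $\lambda_j$ times. The two points that need checking are exactly the ones you verify: the entries $1,\dots,k$ are the rightmost $\lambda_j$ boxes of each row $j$, and the bound $c_j\le n-k+\lambda_j=\Lambda_j$ guarantees injectivity. Griffin's theorem $I_{n,\lambda,s}=\gr I(X_{n,\lambda,s}(\aalpha_s))$ and the orbit-harmonics equality $\dim_\QQ \QQ[\xxx_n]/\gr I(X)=|X|$ then finish the count.

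Your route buys an explicit, elementary bijection between the cells of the paving and the points of the generic fiber $X_{n,\lambda,s}(\aalpha_s)$. This fits the paper's orbit-harmonics and spectrum viewpoint. It also means this count no longer depends on the GLW identification $H^*(Y_{n,\lambda,s})\cong R_{n,\lambda,s}$.

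The paper's route buys brevity and robustness. It only needs that the cells are indexed by admissible maps, not the precise combinatorics of Definition~\ref{def: P-admissible}. Your argument, by contrast, relies on that definition being transcribed exactly, including the right-edge convention you rightly flagged.
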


\begin{proof}
    This follows from the fact that $\Hilb(H^*(Y_{n,\lambda,s}),q) = \Hilb(R_{n,\lambda,s},q)$, which was proven in \cite{GLW}. Evaluating $q = 1$, we obtain the result.
\end{proof}

\begin{lemma}
    The affine paving given by the cells $\{ C_w \cap Y_{n,\lambda,s} : w \text{ is admissible with respect to } P\}$ is stable under the action of $U$.
\end{lemma}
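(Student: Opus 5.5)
The plan is to show that each piece $C_w \cap Y_{n,\lambda,s}$ is $U$-stable. This follows from two separate facts: $U$ preserves $Y_{n,\lambda,s}$, and $U$ preserves every Schubert cell $C_w$ of $\Fl_{1^n}(\CC^K)$.

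The first fact is exactly Lemma~\ref{lem:UactsonYnls}, so the work lies in the second. For the second, I use that $U$ is a subgroup of the diagonal torus $T=(\CC^\times)^K$ acting diagonally in the basis $e_1,\dots,e_K$. The Schubert cells $C_w$ in the GLW paving are the orbits of a Borel subgroup $B$ of upper (or lower, depending on the GLW convention for the reference flag) triangular matrices through the coordinate partial flags $w F_\bullet = (\langle e_{w(1)}\rangle \subset \langle e_{w(1)},e_{w(2)}\rangle\subset\cdots)$. Since $T\subset B$, we get $t\cdot C_w = t B\, wF_\bullet = B\, wF_\bullet = C_w$ for every $t\in T$, and in particular for every $u\in U$.

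If GLW instead define $C_w$ by rank conditions, namely $\dim(V_i\cap F_j)$ for the standard coordinate flag $F_\bullet$ (or the flag $U_\bullet$ adapted to the basis), I would argue directly. Each $F_j$ is spanned by basis vectors, so $uF_j=F_j$ for diagonal $u$. Then $\dim(uV_i\cap F_j)=\dim(u(V_i\cap F_j))=\dim(V_i\cap F_j)$, so the rank data defining $C_w$ is preserved.

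Combining the two facts gives $u(C_w\cap Y_{n,\lambda,s})=uC_w\cap uY_{n,\lambda,s}=C_w\cap Y_{n,\lambda,s}$. The closed pieces of the filtration are unions of such cells; in GLW they are intersections of $Y_{n,\lambda,s}$ with Schubert varieties, or unions of cells of bounded dimension. These are therefore also $U$-stable, so the whole paving is $U$-stable.

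The only real obstacle is bookkeeping. I need to confirm that the reference flag GLW use to define $C_w$ is a coordinate flag in the basis $e_1,\dots,e_K$ in which $U$ is diagonal. GLW's cells are defined with respect to the canonical filling $P$ and the basis $e_i$, and $U$ is diagonal in exactly that basis, so this should check directly. If their flag were some permuted coordinate flag, such as $w_U F_\bullet$, the argument is unchanged, since any coordinate flag is $T$-stable.
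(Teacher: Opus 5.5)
Your proposal is correct and matches the paper's argument: each piece $C_w \cap Y_{n,\lambda,s}$ is $U$-stable because $U\subset T$ preserves every Schubert cell $C_w$, and $U$ preserves $Y_{n,\lambda,s}$ by Lemma~\ref{lem:UactsonYnls}. Your added justification of why $T$ preserves $C_w$ (via $T\subset B$, or via rank conditions against a coordinate flag), together with your remark that the closed filtration pieces are then also $U$-stable, supplies details the paper takes for granted.
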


\begin{proof}
    Let $w$ be admissible with respect to $P$. Since $U \subset T$, and the Schubert cells $C_w$ are stable under the action of $T$, then $UC_w \subset C_w$. By lemma \ref{lem:UactsonYnls}, we have that $UY_{n,\lambda,s} \subset Y_{n,\lambda,s}$. Therefore, given $V_\bullet \in C_w \cap Y_{n,\lambda,s}$ and $u \in U$, we have that $uV_\bullet \in C_w$ and $uV_\bullet \in Y_{n,\lambda,s}$. The claim is proven.
\end{proof}

\begin{corollary}\label{cor: moduledim2}
    The ring $H^*_U(Y_{n,\lambda,s})$ is a free $\QQ[\uuu_s]$-module of rank $|\AAA_{n,\lambda,s}|$.
\end{corollary}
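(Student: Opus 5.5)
The corollary follows by combining the $U$-stable affine paving just established with Lemma~\ref{lem: aff-paving-induces-basis} and the count in Lemma~\ref{lem: wsandAnls}. The only real content is bookkeeping: $U$ is a torus, and $H_U^*(pt)$ must be identified with $\QQ[\uuu_s]$.

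First, $U$ is a complex torus. The map $(\CC^\times)^s\to U$, $(u_1,\dots,u_s)\mapsto \operatorname{diag}(u_{\xi(1)},\dots,u_{\xi(K)})$, is an isomorphism of algebraic groups. It is injective because every row of $P$ is nonempty: $s\ge \ell(\lambda)$ and each row of $\Lambda$ has length $n-k+\lambda_j\geq 1$ when $n>k$, while if $n=k$ we may restrict to the nonempty rows. So $U\cong(\CC^\times)^s$, and $H_U^*(pt)\cong \QQ[u_1,\dots,u_s]$, where $u_j$ is the first Chern class of the character picking out the $j$th coordinate, as in Remark~\ref{rem: graded-nakayama-application}.

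Second, the action of $U$ on $Y_{n,\lambda,s}$ (Lemma~\ref{lem:UactsonYnls}) is algebraic. It is the restriction of the algebraic $T$-action on $\Fl_{1^n}(\CC^K)$ to a closed invariant subvariety. By the preceding lemma, the Griffin--Levinson--Woo paving $\{C_w\cap Y_{n,\lambda,s}\}$ is $U$-stable. We also need that the paving is a filtration by closed $U$-stable subvarieties. The closed pieces are unions of the cells ordered by Bruhat order, intersected with $Y_{n,\lambda,s}$, and each is $U$-stable because both the Schubert varieties and $Y_{n,\lambda,s}$ are. Lemma~\ref{lem: aff-paving-induces-basis} then shows that $H_U^*(Y_{n,\lambda,s})$ is a free $\QQ[\uuu_s]$-module of rank $\dim_\QQ H^*(Y_{n,\lambda,s})$.

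Finally, an affine paving gives $\dim_\QQ H^*(Y_{n,\lambda,s})$ equal to the number of cells, with one class per cell in degree twice its complex dimension. The cells are indexed by maps $w$ admissible with respect to $P$, so Lemma~\ref{lem: wsandAnls} identifies this number with $|\AAA_{n,\lambda,s}|$. One could instead use the Hilbert series equality $H^*(Y_{n,\lambda,s})\cong R_{n,\lambda,s}$ from \cite{GLW} together with Theorem~\ref{thm: griffin-basis}. I expect no serious obstacle. The one point needing care is that "affine paving" in Lemma~\ref{lem: aff-paving-induces-basis} requires closed invariant filtration steps, and I would justify this from the Bruhat-order closure relations of Schubert cells.
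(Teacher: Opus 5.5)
Your proposal is correct and follows the paper's proof: the $U$-stable Griffin--Levinson--Woo paving together with Lemma~\ref{lem: aff-paving-induces-basis} gives freeness of rank equal to the number of admissible $w$, and Lemma~\ref{lem: wsandAnls} converts that count to $|\AAA_{n,\lambda,s}|$. One small fix concerns the torus bookkeeping: you do not need injectivity of $(\CC^\times)^s\to U$, and when $n=k$, $s>\ell(\lambda)$, ``restricting to the nonempty rows'' would leave a polynomial ring in only $\ell(\lambda)$ variables, so instead let $(\CC^\times)^s$ act non-faithfully, which Lemma~\ref{lem: aff-paving-induces-basis} permits and which gives $H^*_U(pt)=\QQ[\uuu_s]$ directly.
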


\begin{proof}
    By lemma \ref{lem: aff-paving-induces-basis}, we have that $H^*_U(Y_{n,\lambda,s})$ is a free module of rank 
    \[
| \{w : w \text{ is admissible with respect to }P\}|
\]
    Applying lemma \ref{lem: wsandAnls} proves the second half of the claim.
\end{proof}

\subsection{Family 1 and the iterated projective bundle $Y_{n,\emptyset,s}$}

To prove the first family of relations in $H^*_U(Y_{n,\lambda,s})$, we follow the argument in \cite{GLW} by considering the iterated projective bundle $Y_{n,\emptyset,s}$, where $\emptyset$ is the empty partition. We briefly recall the construction of $Y_{n,\emptyset,s}$ in \cite{GLW}. Let $\widetilde{N_\Lambda^{-1}V}_{n-1}$ denote the rank $s+n-1$ vector bundle over $Y_{n-1,\emptyset,s}$ whose fiber over $V_\bullet$ is $N_\Lambda^{-1}(V_{n-1})$, and $\widetilde{V}_{n-1}$ denote the rank $n-1$ tautological subspace bundle over $Y_{n-1,\emptyset,s}$.

\begin{lemma}\cite{GLW}\label{lem: iterated-proj-bundle}
    Let $P'$ denote the result of deleting the first column of $P$. Then, the map
    \[
Y_{n,\emptyset,s} \to Y_{n-1,\emptyset,s}
\]
    obtained by $(V_0 \subset \dots \subset V_n) \mapsto (V_0 \subset \dots \subset V_{n-1})$ is a $\PP^{s-1}$-bundle map. Furthermore, we have an isomorphism
    \begin{equation}\label{eq: ps-1bundle-map}
     Y_{n,\emptyset,s} \to \PP(\widetilde{N_\Lambda^{-1}V}_{n-1}/\widetilde{V}_{n-1})
    \end{equation}
    defined by mapping $V_\bullet$ to $V_n/V_{n-1}$ over $(V_0 \subset \dots \subset V_{n-1})$.
\end{lemma}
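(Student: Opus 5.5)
We prove this lemma (attributed to \cite{GLW}) directly from the definitions, using the shifted description $N_\Lambda V_i\subset V_{i-1}$ of $Y_{n,\lambda,s}$ recalled above. When $\lambda=\emptyset$ we have $k=0$, so $\Lambda=(n^s)$ is an $s\times n$ rectangle and $K=sn$. The first step is to check what the defining conditions say. The condition $N_\Lambda^{n}\CC^K\subset V_n$ is vacuous, since $N_\Lambda^n=0$ on an $s\times n$ rectangle. So $Y_{n,\emptyset,s}$ is just the set of partial flags $V_0\subset V_1\subset\cdots\subset V_n$ in $\CC^K$ with $\dim V_i=i$ and $N_\Lambda V_i\subset V_{i-1}$. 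In particular the truncation $(V_0\subset\cdots\subset V_{n-1})$ satisfies the same conditions for $i\le n-1$.

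The second step is to identify the ambient space for the truncation. A flag in $Y_{n-1,\emptyset,s}$ naturally lives in $\CC^{s(n-1)}$, the span of the basis vectors not in the first column of $P$; call the resulting tableau $P'$. We claim every $V_\bullet\in Y_{n,\emptyset,s}$ has $V_{n-1}\subset \ker N_\Lambda^{n-1}$, and that this kernel is identified with $\CC^{s(n-1)}$ compatibly with $N_\Lambda$ restricted to it. Indeed, induction on $i$ gives $N_\Lambda^{i}V_i=0$, so $V_{n-1}\subset\ker N_\Lambda^{n-1}$. The kernel $\ker N_\Lambda^{n-1}$ is spanned by the $e_j$ lying in the rightmost $n-1$ columns, i.e.\ in $P'$, and $N_\Lambda$ acts there with Jordan type $((n-1)^s)$. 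Hence truncation gives a well-defined map $Y_{n,\emptyset,s}\to Y_{n-1,\emptyset,s}$.

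The third step is to show this map is a projective bundle. Given $(V_0\subset\cdots\subset V_{n-1})$, extending it to a point of $Y_{n,\emptyset,s}$ amounts to choosing an $n$-dimensional $V_n$ with $V_{n-1}\subset V_n\subset N_\Lambda^{-1}(V_{n-1})$. Such choices are exactly the lines in $N_\Lambda^{-1}(V_{n-1})/V_{n-1}$. For the dimension, $\dim N_\Lambda^{-1}(V_{n-1})=\dim\ker N_\Lambda+\dim(V_{n-1}\cap\operatorname{im}N_\Lambda)=s+(n-1)$. Here we need $V_{n-1}\subset\operatorname{im}N_\Lambda$, which holds because $\ker N_\Lambda^{n-1}=\operatorname{im}N_\Lambda$ for a rectangular Jordan type. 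So the quotient has constant rank $s$ and the fibers are $\PP^{s-1}$.

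The last step, which needs the most care, is checking that $\widetilde{N_\Lambda^{-1}V}_{n-1}$ is an honest algebraic vector bundle, i.e.\ locally trivial. Constant rank alone gives this: $N_\Lambda^{-1}(V_{n-1})$ is the kernel of the composite $\CC^K\to\CC^K/V_{n-1}\to \CC^K/(N_\Lambda^{-1}V_{n-1})$, or equivalently of the bundle map $\CC^K\to \CC^K/\widetilde V_{n-1}$, $v\mapsto N_\Lambda v \bmod V_{n-1}$. This map has constant rank $K-(s+n-1)$, and a constant-rank morphism of vector bundles over a reduced base has kernel a subbundle; $Y_{n-1,\emptyset,s}$ is reduced by construction. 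The quotient $\widetilde{N_\Lambda^{-1}V}_{n-1}/\widetilde V_{n-1}$ is then a rank-$s$ bundle. The map $V_\bullet\mapsto V_n/V_{n-1}$ defines a morphism $Y_{n,\emptyset,s}\to\PP(\widetilde{N_\Lambda^{-1}V}_{n-1}/\widetilde V_{n-1})$. Its inverse sends a line $L$ over $V_{\le n-1}$ to the preimage of $L$ in $N_\Lambda^{-1}(V_{n-1})$. Both maps are algebraic, since they are expressed through tautological bundles on incidence varieties, and they are mutually inverse. This gives the isomorphism \eqref{eq: ps-1bundle-map}.
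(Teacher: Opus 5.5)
Your proof is correct. It is essentially the standard argument from \cite{GLW} that the paper cites without reproducing: the fiber over $V_{\le n-1}$ is $\PP\bigl(N_\Lambda^{-1}(V_{n-1})/V_{n-1}\bigr)$, and this has constant dimension $s-1$ because $V_{n-1}\subset\ker N_\Lambda^{n-1}=\operatorname{im}N_\Lambda$. Your constant-rank map $v\mapsto N_\Lambda v \bmod V_{n-1}$ is the surjection $\CC^{ns}/\widetilde{V}_{n-1}\to\operatorname{im}(N_\Lambda)/\widetilde{V}_{n-1}$ with kernel $E$ that the paper uses in Lemma~\ref{lem: proj-bundle-cohom}; treating it as a surjection onto that bundle gives the subbundle property directly, without the constant-rank lemma over a reduced base.
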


\begin{lemma}\label{lem: proj-bundle-cohom}
    We have that
    \[
H^*_U(Y_{n,\emptyset,s}) \cong \frac{\QQ[x_1,\dots,x_n;u_1,\dots,u_s]}{( (x_i-u_1)\dots(x_i-u_s) : 1 \leq i \leq n)}
\]
\end{lemma}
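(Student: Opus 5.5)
We argue by induction on $n$, using the iterated projective bundle structure of Lemma~\ref{lem: iterated-proj-bundle} and the equivariant projective bundle formula. For $\lambda=\emptyset$ we have $k=0$, so $\Lambda=(n^s)$ and $K=sn$. The first column of $P$ consists of the $s$ basis vectors $e_i$ killed by $N_\Lambda$, one in each row, so $U$ acts on the $s$ rows with weights $u_1,\dots,u_s$. For the base case $n=1$ the shifted condition reads $N_\Lambda V_1\subset V_0=0$, so $Y_{1,\emptyset,s}=\PP(\ker N_\Lambda)$. As a $U$-representation, $\ker N_\Lambda\cong \CC_{u_1}\oplus\cdots\oplus\CC_{u_s}$. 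The equivariant projective bundle formula over a point, with $x_1:=c_1^U(\widetilde V_1)$ the tautological line class, then gives
\[
H^*_U(Y_{1,\emptyset,s})\cong \QQ[x_1,\uuu_s]/\bigl((x_1-u_1)\cdots(x_1-u_s)\bigr).
\]
We will fix sign conventions so that the weights enter as $x_1-u_j$; this is consistent with the fixed-point restriction $x_r\mapsto u_{\xi(w(r))}$ recorded after Definition~\ref{def: phi-lambda}.

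For the inductive step, the map $Y_{n,\emptyset,s}\to Y_{n-1,\emptyset,s}$ is $U$-equivariant, because $U$ commutes with $N_\Lambda$ (Lemma~\ref{lem:UactsonYnls}). Hence the isomorphism \eqref{eq: ps-1bundle-map} identifies $Y_{n,\emptyset,s}$ with $\PP(E)$, where
\[
E:=\widetilde{N_\Lambda^{-1}V}_{n-1}/\widetilde V_{n-1}
\]
is a $U$-equivariant bundle of rank $s$. The tautological line of $\PP(E)$ is $\widetilde V_n/\widetilde V_{n-1}$, and we set $x_n:=c_1^U(\widetilde V_n/\widetilde V_{n-1})$. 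The projective bundle formula then yields
\[
H^*_U(Y_{n,\emptyset,s})\cong H^*_U(Y_{n-1,\emptyset,s})[x_n]\big/\Bigl(\textstyle\sum_{j}(-1)^{j}c^U_{s-j}(E)\,x_n^{j}\Bigr).
\]
It therefore suffices to show $c^U(E)=\prod_{j=1}^s(1+u_j)$ in $H^*_U(Y_{n-1,\emptyset,s})$, i.e.\ that the Chern roots of $E$ are $u_1,\dots,u_s$. The relation then becomes $(x_n-u_1)\cdots(x_n-u_s)$, and combining with the inductive hypothesis gives the claimed presentation.

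The key step is computing $c^U(E)$. The restriction $N_\Lambda:N_\Lambda^{-1}(V_{n-1})\to V_{n-1}$ is surjective, since $V_{n-1}\subset N_\Lambda\CC^K$ by the shifted conditions. Its kernel is $\ker N_\Lambda$, which is contained in $V_{n-1}$ when $n\ge2$. This gives a short exact sequence of equivariant bundles
\[
0\to \ker N_\Lambda\to \widetilde{N_\Lambda^{-1}V}_{n-1}\to \widetilde V_{n-1}\otimes\chi\to 0,
\]
where $\ker N_\Lambda$ is the trivial bundle with weights $u_1,\dots,u_s$. Because $N_\Lambda$ commutes with $U$, it is $U$-equivariant with trivial twist, so $\chi$ is trivial. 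The Whitney formula gives $c^U(\widetilde{N_\Lambda^{-1}V}_{n-1})=\prod_j(1+u_j)\,c^U(\widetilde V_{n-1})$. Applying Whitney again to $0\to\widetilde V_{n-1}\to\widetilde{N_\Lambda^{-1}V}_{n-1}\to E\to0$ then yields $c^U(E)=\prod_j(1+u_j)$, as required.

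The main obstacle is bookkeeping of signs and conventions. One must check that the tautological-line convention in the paper's projective bundle formula produces $\prod(x_n-u_j)$ rather than $\prod(x_n+u_j)$. This is resolved by choosing $x_i$ to be the first Chern class of $\widetilde V_i/\widetilde V_{i-1}$ with the weight convention that makes fixed-point restriction send $x_i\mapsto u_{\xi(w(i))}$. A small consistency check is localization at a fixed point: there the relation $(x_n-u_1)\cdots(x_n-u_s)$ must vanish, which confirms the sign. The isomorphism respects the identification of generators, and freeness of rank $s^n$ agrees with Lemma~\ref{lem: aff-paving-induces-basis}.
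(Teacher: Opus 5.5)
Your proof is correct and is essentially the paper's argument. Both go by induction along the $\PP^{s-1}$-bundle tower, apply the equivariant projective bundle formula, and use a Whitney-sum computation to get $c^U(E)=\prod_{j}(1+u_j)$. The only variation is the exact sequence: you use $0\to\ker N_\Lambda\to\widetilde{N_\Lambda^{-1}V}_{n-1}\to\widetilde V_{n-1}\to 0$ to get $c^U(E)=c^U(\ker N_\Lambda)$, while the paper uses $0\to E\to\CC^{ns}/\widetilde V_{n-1}\to\im(N_\Lambda)/\widetilde V_{n-1}\to 0$ to get $c^U(E)=c^U(\CC^{ns}/\im N_\Lambda)$; both trivial bundles carry exactly one weight $u_j$ per row of $P$.

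One aside needs correcting. The claim $\ker N_\Lambda\subset V_{n-1}$ is false in general; it fails for dimension reasons whenever $s\geq n$. You never use it, though. The sequence only needs $\ker N_\Lambda\subset N_\Lambda^{-1}(V_{n-1})$, which is automatic, and surjectivity, which follows from $V_{n-1}\subset\ker N_\Lambda^{n-1}=\im N_\Lambda$ for the rectangular shape $\Lambda=(n^s)$.
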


\begin{proof}
    We will instead prove that $(u_1-x_i)\dots(u_s-x_i) = 0$. By Viete's relations, we may rewrite the relation as
    \begin{equation}\label{eq: rel-family1-chern}
        (u_1-x_i)\dots(u_s-x_i) = e_s(\uuu_s) - e_{s-1}(\uuu_s)x_i + \dots + (-1)^s x_i^s
    \end{equation}
    We will proceed by induction, as in \cite{GLW}. For $n = 1$, note that $Y_{1,\emptyset,s}$ is $\PP^{s-1}$. Letting $\CC^s$ denote the trivial $U$-bundle over $Y_{n,\emptyset,s}$, we see that
    \[
c^U(\CC^{s}) = (1+u_1)\dots ( 1 + u_s) = 1 + e_1(\uuu_s) + \dots + e_s(\uuu_s)
\]
    so that $c_i^U(\CC^s) = e_i(\uuu_s)$. The tautological line bundle $\ell_1$ is identified with $\mathcal{O}(-1)$, and letting $x_1 = c_1^U(\mathcal{O}(-1))$, the $U$-equivariant cohomology of $\PP^{s-1}$ is given by $\QQ[x_1,\uuu_s]/(e_s(\uuu_s) - e_{s-1}(\uuu_s)x_1 + \dots + (-1)^s x_1^s)$. 

    Next, suppose by way of induction that 
    \[
H^*_U(Y_{n-1,\emptyset,s}) \cong \frac{\QQ[x_1,\dots,x_{n-1};\uuu_s]}{( e_s(\uuu_s) - e_{s-1}(\uuu_s)x_i + \dots + (-1)^s x_i^s : 1 \leq i \leq n-1)}
\]
    Let $E$ denote the rank-$s$ vector bundle $\widetilde{N_\Lambda^{-1}V}_{n-1}/\widetilde{V}_{n-1}$. Since $Y_{n,\emptyset,s} \cong \PP(E)$ by lemma \ref{lem: iterated-proj-bundle}, we have that $\widetilde{V}_n/\widetilde{V}_{n-1} \cong \mathcal{O}_E(-1)$, and by the (equivariant) projective bundle formula, we have
    \[
H^*_U(Y_{n,\emptyset,s}) \cong \frac{H^*_U(Y_{n-1,\emptyset,s})[x_n]}{\langle c_s^U(E) - c_{s-1}^U(E)x_n + \dots + (-1)^sx_n^s \rangle }
\]
    It now suffices to show that $c_i^U(E)=e_i(\uuu_s)$ for all $1\leq i\leq s$.
From \cite{GLW}, we have a short exact sequence of $U$-equivariant vector
bundles
\[
\begin{tikzcd}
0 \arrow[r] & E \arrow[r] & \CC^{ns}/\widetilde{V}_{n-1}
\arrow[r] & \im(N_\Lambda)/\widetilde{V}_{n-1} \arrow[r] & 0,
\end{tikzcd}
\]
so by Whitney sum formula,
\[
c^U(E)=c^U(\CC^{ns}/\im(N_\Lambda)).
\]
Now $\CC^{ns}/\im(N_\Lambda)$ is spanned by the images of the first-column basis
vectors, one from each row of $P$, and hence is isomorphic as a
$U$-representation to
\[
\CC_{u_1}\oplus \cdots \oplus \CC_{u_s}.
\]
Therefore
\[
c^U(E)=\prod_{j=1}^s (1+u_j)=1+e_1(\uuu_s)+\cdots+e_s(\uuu_s),
\]
which proves that $c_i^U(E)=e_i(\uuu_s)$.

\end{proof}
We have the following lemma:
\begin{lemma}\label{lem: proj-bundle-surjection}
    The closed embedding $Y_{n,\lambda,s} \hookrightarrow Y_{n,\emptyset,s}$, induces a surjection in cohomology

    \[
\frac{\QQ[x_1,\dots,x_n;u_1,\dots,u_s]}{( (x_i-u_1)\dots(x_i-u_s) : 1 \leq i \leq n)} \cong H^*_U(Y_{n,\emptyset,s}) \twoheadrightarrow H^*_U(Y_{n,\lambda,s})
\]
    which is the identity map on Chern classes. As a consequence, we have $(x_i-u_1)\dots(x_i-u_s) = 0$ for all $i$ in $J_{n,\lambda,s}$.
\end{lemma}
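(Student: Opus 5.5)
The plan is to deduce the equivariant surjectivity from the non-equivariant surjectivity of \cite{GLW} via Lemma~\ref{lem: rel-paving-equiv-surj}, and then read off the relations from Lemma~\ref{lem: proj-bundle-cohom}.

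\emph{Step 1: the inclusion.} Check that $Y_{n,\lambda,s}$ is a closed $U$-stable subvariety of $Y_{n,\emptyset,s}$. Both are cut out inside $\Fl_{1^n}(\CC^K)$ by the shifted conditions $N_\Lambda V_i\subset V_{i-1}$, together with $N_\Lambda^{n-k}\CC^K\subset V_n$. Here I am using the identification in \cite{GLW} of $Y_{n,\emptyset,s}$ with the variety built from the same ambient $\CC^K$ and $N_\Lambda$, with the $\lambda$-condition dropped. So the inclusion is a closed embedding. It is $U$-equivariant by Lemma~\ref{lem:UactsonYnls} and the same commutation argument.

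\emph{Step 2: the paving hypotheses of Lemma~\ref{lem: rel-paving-equiv-surj}.} The space $Y_{n,\emptyset,s}$ is an iterated $\PP^{s-1}$-bundle (Lemma~\ref{lem: iterated-proj-bundle}), hence smooth and compact. Its cells $C_w\cap Y_{n,\emptyset,s}$, for $w$ admissible with respect to the relevant filling, give an affine paving. These cells are $U$-stable because $C_w$ is $T$-stable and $U\subset T$. The admissible words for $\lambda$ form a subset of those for $\emptyset$, since admissibility for $\lambda$ only adds the condition $[k]\subset\im(w)$. The cells indexed by the remaining words therefore pave the complement $Y_{n,\emptyset,s}\setminus Y_{n,\lambda,s}$, after ordering them compatibly with closure; this is exactly the argument of \cite{GLW} for ordinary surjectivity. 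Granting this, Lemma~\ref{lem: rel-paving-equiv-surj} gives surjectivity of $H^*_U(Y_{n,\emptyset,s})\to H^*_U(Y_{n,\lambda,s})$. Combining with Lemma~\ref{lem: proj-bundle-cohom} yields the displayed surjection.

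I expect this step to be the main obstacle: verifying that the complement really has a $U$-stable affine paving rather than just a union of cells. If the direct argument is hard, I will fall back on citing the ordinary-cohomology surjectivity of \cite{GLW}. I would then lift it by graded Nakayama (Corollary~\ref{cor: graded-nakayama-surj} with $R=\QQ[\uuu_s]$), using equivariant formality of both spaces from Lemma~\ref{lem: aff-paving-induces-basis} to identify the reductions modulo $R_+$ with ordinary cohomology.

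\emph{Step 3: Chern classes and the relations.} The classes $x_i$ on either side are $c_1^U(\widetilde V_i/\widetilde V_{i-1})$. The tautological bundles on $Y_{n,\lambda,s}$ are pullbacks of those on $Y_{n,\emptyset,s}$, and $u_j$ pulls back to $u_j$ since the map is $H^*_U(pt)$-linear. So by functoriality of equivariant Chern classes the map is the identity on these generators. Finally, $(x_i-u_1)\cdots(x_i-u_s)$ is zero in $H^*_U(Y_{n,\emptyset,s})$ by Lemma~\ref{lem: proj-bundle-cohom}. Its image is therefore zero in $H^*_U(Y_{n,\lambda,s})$, so the family 1 relations from $J_{n,\lambda,s}$ hold there.
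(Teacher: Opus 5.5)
Your route is the paper's. Its proof applies Lemma~\ref{lem: rel-paving-equiv-surj} to the pavings of $Y_{n,\lambda,s}$ and of $Y_{n,\emptyset,s}\setminus Y_{n,\lambda,s}$ supplied by \cite{GLW}, then uses Lemma~\ref{lem: proj-bundle-cohom} for the family 1 relations. Your graded Nakayama fallback is exactly the proof of Lemma~\ref{lem: rel-paving-equiv-surj}, and your Step 3 is fine.

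The problem is Step 1, and your justification in Step 2 inherits it. $Y_{n,\emptyset,s}$ is not cut out of $\Fl_{1^n}(\CC^K)$ by dropping the condition $N_\Lambda^{n-k}\CC^K\subset V_n$. For $\lambda=\emptyset$ one has $k=0$, $\Lambda=(n^s)$ and $K=ns$, so $Y_{n,\emptyset,s}\subset\Fl_{1^n}(\CC^{ns})$ for a nilpotent of Jordan type $(n^s)$; this is the $\CC^{ns}$ in the proof of Lemma~\ref{lem: proj-bundle-cohom}. Already for $n=1$, $\lambda=(1)$, $s=2$ your recipe gives a single point, whereas $Y_{1,\emptyset,2}\cong\PP^1$. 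The variety you describe is therefore not $Y_{n,\emptyset,s}$, and it need not be smooth, so Lemmas~\ref{lem: iterated-proj-bundle} and~\ref{lem: proj-bundle-cohom} do not apply to it. For the same reason, the $\lambda$-admissible words are not the $\emptyset$-admissible words with $[k]\subset\im(w)$ added. The two pavings are indexed by maps $[n]\to[K]$ and $[n]\to[ns]$ relative to different tableaux.

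What you need is to realize $(\CC^K,N_\Lambda)$ as an $N$-stable subspace $W\subset\CC^{ns}$ compatible with rows: in the $j$th Jordan chain, take its last $\Lambda_j$ vectors. Then $Y_{n,\lambda,s}$ is the closed locus of $V_\bullet\in Y_{n,\emptyset,s}$ with $V_n\subset W$ and $N^{n-k}W\subset V_n$.

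Since $W$ is spanned row by row by basis vectors, this embedding intertwines the two tori $U$. It is this, not Lemma~\ref{lem:UactsonYnls}, that gives equivariance. The tautological flags restrict, so your Step 3 goes through. The affine paving of the complement is genuine input from \cite{GLW}, not a bookkeeping comparison of admissibility conditions for a common $P$.

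So take the embedding and the ordinary surjectivity $H^*(Y_{n,\emptyset,s})\twoheadrightarrow H^*(Y_{n,\lambda,s})$ from \cite{GLW}, as the paper does, and your fallback completes the proof. It only uses $H^{\mathrm{odd}}=0$ for the two spaces, so no $U$-stability of the complement's paving is required.
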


\begin{proof}
    This follows from Lemma \ref{lem: rel-paving-equiv-surj}, as well as Theorem 3.21, Lemma 4.5 in \cite{GLW}. The second statement is a consequence of Lemma \ref{lem: proj-bundle-cohom}.
\end{proof}

This implies that the first family of relations holds in $H^*_U(Y_{n,\lambda,s})$. The following lemma is also a consequence of the surjection $H^*_U(Y_{n,\emptyset,s}) \twoheadrightarrow H^*_U(Y_{n,\lambda,s})$:
\begin{lemma}\label{lem: vars-generate}
    The ring $H^*_U(Y_{n,\lambda,s})$ is generated by the variables $x_1,\dots,x_n$ and $u_1,\dots,u_s$ where
    \begin{itemize}
        \item $x_i = c_1^U(\ell_i)$ is the equivariant first Chern class of the $i$th tautological bundle $E_i/E_{i-1}$ over $Y_{n,\lambda,s}$
        \item $u_i$ is the first Chern class of the line bundle $EU \times_U \CC_i$ over $BU \cong EU \times_U \{\text{pt}\}$, and $\CC_i$ is the one dimensional representation of $U$ with action given by $(u_1,\dots,u_s)\cdot v = u_iv$.
    \end{itemize}
\end{lemma}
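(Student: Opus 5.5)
This follows from the surjection of Lemma~\ref{lem: proj-bundle-surjection} together with a description of the generators of $H^*_U(Y_{n,\emptyset,s})$ coming from the iterated projective bundle structure.

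First, I would show by induction on $n$ that $H^*_U(Y_{n,\emptyset,s})$ is generated as a $\QQ$-algebra by $u_1,\dots,u_s$ and $x_1,\dots,x_n$, where $x_i=c_1^U(\widetilde V_i/\widetilde V_{i-1})$. The base case is $Y_{1,\emptyset,s}=\PP^{s-1}$. In the inductive step, I would apply the equivariant projective bundle formula to $Y_{n,\emptyset,s}\cong \PP(E)$ from Lemma~\ref{lem: iterated-proj-bundle}. This exhibits $H^*_U(Y_{n,\emptyset,s})$ as $H^*_U(Y_{n-1,\emptyset,s})[x_n]$ modulo one relation. Here $x_n=c_1^U(\mathcal O_E(-1))$, and under the isomorphism of Lemma~\ref{lem: iterated-proj-bundle} this line bundle is $\widetilde V_n/\widetilde V_{n-1}$. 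The classes $x_1,\dots,x_{n-1}$ pull back along the bundle projection to the Chern classes of the corresponding tautological quotients on $Y_{n,\emptyset,s}$, because the tautological bundles are compatible with the forgetful map $V_\bullet\mapsto(V_0\subset\cdots\subset V_{n-1})$. The $u_j$ are pulled back from $H^*_U(pt)=H^*(BU)$, where $u_j$ is the first Chern class of $EU\times_U\CC_j$, matching the generators of $H^*_U(pt)\cong\QQ[\uuu_s]$ from the background.

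Next, I would use that the restriction $\iota^*: H^*_U(Y_{n,\emptyset,s})\to H^*_U(Y_{n,\lambda,s})$ along the closed embedding $\iota$ is a surjective ring homomorphism, by Lemma~\ref{lem: proj-bundle-surjection}. Since the image of a generating set under a surjective ring map generates the target, it suffices to identify the images of the generators. By functoriality of equivariant Chern classes under equivariant pullback, $\iota^*x_i = c_1^U(\iota^*(\widetilde V_i/\widetilde V_{i-1}))$. This is exactly $c_1^U(\ell_i)$, because the tautological flag on $Y_{n,\lambda,s}$ is the restriction of the one on $Y_{n,\emptyset,s}$. Also $\iota^*u_j=u_j$, since both classes are pulled back from $BU$ via the maps to a point, which commute with $\iota$.

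The only point I expect to need care is that the isomorphism in Lemma~\ref{lem: proj-bundle-cohom} sends $x_i$ to these tautological Chern classes rather than to some other normalization, such as $c_1(\mathcal O(1))=-x_i$. The statement of Lemma~\ref{lem: proj-bundle-surjection} already asserts that the map is ``the identity on Chern classes,'' so I would cite that directly. With that identification, the images of $x_1,\dots,x_n,u_1,\dots,u_s$ generate $H^*_U(Y_{n,\lambda,s})$, which is the claim.
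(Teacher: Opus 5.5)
Your proposal is correct and follows the paper's route: the paper deduces this lemma as a direct consequence of the surjection $H^*_U(Y_{n,\emptyset,s})\twoheadrightarrow H^*_U(Y_{n,\lambda,s})$ of Lemma~\ref{lem: proj-bundle-surjection} combined with the presentation of Lemma~\ref{lem: proj-bundle-cohom}, just as you do. Your inductive re-derivation of the generators of $H^*_U(Y_{n,\emptyset,s})$ only repeats what Lemma~\ref{lem: proj-bundle-cohom} already provides, so you could cite that lemma directly.
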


\subsection{An action of $\symm_n$ on $H^*_U(Y_{n,\lambda,s})$}

In this section, we construct an action of $\symm_n$ on
$H^*_U(Y_{n,\lambda,s})$ lifting the usual $\symm_n$-action on
$H^*(Y_{n,\lambda,s})\cong R_{n,\lambda,s}$. We will descend the evident action on the ambient
partial flag variety through the kernel of the restriction map.

Set
\[
S:=H_U^*(pt)=\QQ[u_1,\dots,u_s],
\qquad
A:=H_U^*(\Fl_{(1^n)}(\CC^K)).
\]
By the equivariant Borel presentation for partial flag varieties,
\[
A\cong
\left(
\frac{\QQ[\xxx_K,\uuu_s]}{(e_i(\xxx_K)-e_i(u_{\xi(1)},\dots,u_{\xi(K)}):1\leq i\leq K)}
\right)^{\symm_1\times\cdots\times \symm_1\times \symm_{K-n}},
\]
so permutation of the variables $x_1,\dots,x_n$ defines an action of $\symm_n$
on $A$ fixing $u_1,\dots,u_s$.

Let
\[
q^*:A\to H_U^*(Y_{n,\lambda,s})
\]
be the map induced by the inclusion
$q:Y_{n,\lambda,s}\hookrightarrow \Fl_{(1^n)}(\CC^K)$. By
Lemma~\ref{lem: vars-generate}, the ring $H_U^*(Y_{n,\lambda,s})$ is generated
by the restrictions of $x_1,\dots,x_n$ together with $u_1,\dots,u_s$, so $q^*$
is surjective.

\begin{proposition}\label{prop:Sn-action-equiv}
The kernel $\ker(q^*)$ is stable under the $\symm_n$-action on $A$. Hence the
$\symm_n$-action on $A$ descends to an action on $H_U^*(Y_{n,\lambda,s})$ with
\[
\sigma\cdot x_i=x_{\sigma(i)},\qquad \sigma\cdot u_j=u_j.
\]
Moreover, the map $q^*$ is $\symm_n$-equivariant.
\end{proposition}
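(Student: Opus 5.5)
We will use equivariant localization. The approach is to characterize $\ker(q^*)$ as the set of classes in $A$ that vanish at every $U$-fixed point of $Y_{n,\lambda,s}$, and then show that this set of fixed points is permuted by $\symm_n$.

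\emph{Step 1: injectivity of localization.} By Corollary~\ref{cor: moduledim2}, $H_U^*(Y_{n,\lambda,s})$ is a free $S$-module, since $Y_{n,\lambda,s}$ has a $U$-stable affine paving and is equivariantly formal. Hence the restriction map
\[
H_U^*(Y_{n,\lambda,s})\to H_U^*(Y_{n,\lambda,s}^U)
\]
is injective by the localization theorem: its kernel is $S$-torsion, and a free module has none. Consequently $f\in A$ lies in $\ker(q^*)$ if and only if $f|_p=0$ for every $p\in Y_{n,\lambda,s}^U$.

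\emph{Step 2: the fixed points.} The weights of $U$ on $\CC^K$ are not distinct; $e_i$ and $e_j$ share a weight when $\xi(i)=\xi(j)$. So $Y^U$ need not be finite, and I would not rely on $T$-fixed points. Instead, I would use that $U$-fixed flags are flags compatible with the weight decomposition $\CC^K=\bigoplus_j W_j$, where $W_j$ is spanned by the $e_i$ with $\xi(i)=j$. The line bundle $\ell_i$ restricted to a fixed component then has weight $u_{j}$ for some row $j$, so $x_i$ restricts to $u_{c(i)}$, where $c:[n]\to[s]$ is the \emph{weight word} of the component. The restriction of a general $f\in A$ to a fixed component $Z$ lies in $H_U^*(Z)=S\otimes H^*(Z)$, which need not be just $S$. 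To handle this cleanly, I would reduce to $T$-fixed points in a two-step way: because $Y_{n,\lambda,s}$ is also paved by cells whose centers are the $T$-fixed flags $w$ with $w$ admissible, equivariant formality for $U$ together with injectivity into the fixed locus lets me detect $f$ on the finite set of admissible $w$. This is what I would attempt first, using the fact that each fixed component inherits an affine paving from the cells $C_w\cap Y$ and is therefore itself equivariantly formal. At such a point, $f(\xxx,\uuu)|_w=f(u_{\xi(w(1))},\dots,u_{\xi(w(n))},\uuu)$.

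\emph{Step 3: $\symm_n$-stability of the point set.} The key claim is that the set of weight words $\{(\xi(w(1)),\dots,\xi(w(n))) : w \text{ admissible}\}$ equals the set of words $c\in[s]^n$ in which each letter $j$ appears at least $\lambda_j$ times. This matches the locus $X_{n,\lambda,s}$ of Section~\ref{sec:orbitharmonics}: admissibility fills each row of $P$ from the right, and $[k]\subset\im w$ forces row $j$ to be used at least $\lambda_j$ times. The set of such words is visibly $\symm_n$-stable. Since $(\sigma f)|_w=f|_{w'}$ for an admissible $w'$ whose word is $c\circ\sigma$, $\ker(q^*)$ is $\symm_n$-stable. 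Descent of the action and equivariance of $q^*$ are then formal, with the formulas $\sigma\cdot x_i=x_{\sigma(i)}$ and $\sigma\cdot u_j=u_j$ following directly.

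\emph{Main obstacle.} The hard part is Step 2. The point-evaluation maps over all admissible $w$ need not be jointly injective on $H_U^*(Y)$, because components of $Y^U$ are positive-dimensional. If that reduction fails, my fallback is to replace $U$ by a generic one-parameter deformation. Alternatively, I would argue that $H_U^*(Y)\to\prod_w S$ is injective after tensoring with $\operatorname{Frac}(S)$ by a rank count: $\QQ[\xxx_n,\uuu_s]/I(X_{n,\lambda,s})$ embeds in functions on those points and has rank $|\AAA_{n,\lambda,s}|$ by Proposition~\ref{prop: module-dim}, matching Corollary~\ref{cor: moduledim2}.
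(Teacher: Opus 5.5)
\paragraph{Verdict.} Your localization route is genuinely different from the paper's and it works. However, Step 2 should be rewritten, because the obstacle you identify there does not exist.

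\paragraph{Step 2: the fixed locus is finite.} Although $U$ has repeated weights on $\CC^K$, the fixed locus $Y^U_{n,\lambda,s}$ is finite (despite the remark in the paper's last section). Let $V_\bullet$ be $U$-fixed and argue by induction on $i$ that $V_{i-1}$ is a coordinate subspace.
\begin{itemize}
\item Being $N_\Lambda$-stable, $V_{i-1}$ meets each row of $P$ in the span of the rightmost few boxes of that row.
\item Hence $N_\Lambda^{-1}(V_{i-1})$ is again coordinate, and $N_\Lambda^{-1}(V_{i-1})/V_{i-1}$ is spanned by at most one basis vector from each row.
\item Its $U$-weights are therefore pairwise distinct, so the $U$-stable line $V_i/V_{i-1}$ inside it is a coordinate line.
\end{itemize}
Thus $Y^U_{n,\lambda,s}$ is exactly the set of admissible coordinate flags. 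Step 1 alone then gives injectivity of $\rho\colon H_U^*(Y_{n,\lambda,s})\to\prod_w \QQ[\uuu_s]$.

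You should drop the detour through ``equivariantly formal fixed components''. Had there been a positive-dimensional component $Z$, restriction to points would kill $H^{>0}(Z)$ even after inverting $\QQ[\uuu_s]$, so that reasoning could not have worked.

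\paragraph{The rank-count fallback.} This is also valid, but only once you invoke Lemma~\ref{lem: vars-generate}. The surjection $\QQ[\xxx_n,\uuu_s]\twoheadrightarrow H_U^*(Y_{n,\lambda,s})$, combined with Step 3, identifies the image of $\rho$ with $\QQ[\xxx_n,\uuu_s]/I(X_{n,\lambda,s})$. A surjection between free $\QQ[\uuu_s]$-modules of the same finite rank is then injective.

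\paragraph{A small point in Step 3.} Record that every word $c$ in which each letter $j$ appears at least $\lambda_j$ times is realized by exactly one admissible $w$. Row $j$, of length $\Lambda_j=n-k+\lambda_j$, always has room, because the other letters occupy at least $k-\lambda_j$ of the $n$ positions.

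\paragraph{Comparison with the paper.} The paper argues formally: it reduces $q^*$ modulo $S_+$, quotes $\symm_n$-stability of the nonequivariant kernel from \cite{GLW}, and applies graded Nakayama to $M=(\sigma(I)+I)/I$. That avoids any fixed-point analysis. But the displayed isomorphism $M/S_+M\cong(\sigma(I)+I+S_+A)/(I+S_+A)$ actually computes the image of $M$ in $B/S_+B$. Equating the two requires $M\cap S_+B=S_+M$, which is not automatic. For example, take $S=\QQ[u]$, $A=S[x_1,x_2]$, $I=(x_1,x_2-u)$, and $\sigma$ swapping $x_1,x_2$. Then $A/I\cong S$ is free and $I+S_+A=(x_1,x_2,u)$ is $\sigma$-stable, yet $\sigma(I)\neq I$.

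Your localization argument sees the equivariant information directly, so it gives a complete proof of the stability. As a bonus, once $\rho$ is injective, the kernel of $\QQ[\xxx_n,\uuu_s]\to H_U^*(Y_{n,\lambda,s})$ equals $I(X_{n,\lambda,s})$, which is $J^\QQ_{n,\lambda,s}$ by Proposition~\ref{prop: module-dim}. This yields Theorem~\ref{thm: main} directly, without the Schubert-class argument for family 2 and without the $\symm_n$-action.
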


\begin{proof}
Set
\[
S := H_U^*(pt)=\QQ[u_1,\dots,u_s],
\qquad
A := H_U^*(\Fl_{(1^n)}(\CC^K)),
\qquad
B := H_U^*(Y_{n,\lambda,s}),
\]
and let
\[
q^*:A \twoheadrightarrow B
\]
be the map induced by the inclusion
\[
Y_{n,\lambda,s}\hookrightarrow \Fl_{(1^n)}(\CC^K).
\]
By Lemma~\ref{lem: vars-generate}, the map \(q^*\) is surjective. Let
\[
I:=\ker(q^*),
\]
so that \(B\cong A/I\). Also set
\[
S_+ := \bigoplus_{d>0} S^{2d} = (u_1,\dots,u_s).
\]

Since both \(\Fl_{(1^n)}(\CC^K)\) and \(Y_{n,\lambda,s}\) admit affine pavings,
they are equivariantly formal by Lemma~\ref{lem: aff-paving-induces-basis}.
Hence the forgetful maps induce natural isomorphisms
\[
A/S_+A \cong H^*(\Fl_{(1^n)}(\CC^K)),
\qquad
B/S_+B \cong H^*(Y_{n,\lambda,s}).
\]
Under these identifications, the map \(q^*:A\to B\) induces the ordinary
restriction map
\[
\overline q^*: A/S_+A \longrightarrow B/S_+B,
\]
whose kernel is stable under the \(\symm_n\)-action by
\cite[Section~5]{GLW}.

Fix \(\sigma\in\symm_n\), and define the ideal
\[
\sigma(I):=\{\sigma(f):f\in I\}.
\]
Consider the graded \(S\)-submodule
\[
M:=\frac{\sigma(I)+I}{I}\subset A/I\cong B.
\]
To show that the \(\symm_n\)-action descends to \(A/I\), it is enough to prove
that \(M=0\), since this is equivalent to \(\sigma(I)\subseteq I\).

We now reduce modulo \(S_+\). Since \(M\subset A/I\), we have
\[
M/S_+M
\cong
\frac{\sigma(I)+I+S_+A}{I+S_+A}.
\]
Let
\[
\pi:A\twoheadrightarrow A/S_+A
\]
be the quotient map. Then
\[
\ker(\overline q^*) = \frac{I+S_+A}{S_+A} = \pi(I).
\]
Because the \(\symm_n\)-action fixes \(u_1,\dots,u_s\), it preserves \(S_+A\),
and therefore
\[
\pi(\sigma(I))=\sigma(\pi(I))=\sigma(\ker(\overline q^*)).
\]
It follows that
\[
M/S_+M
\cong
\frac{\sigma(\ker(\overline q^*))+\ker(\overline q^*)}
     {\ker(\overline q^*)}
=0,
\]
since \(\ker(\overline q^*)\) is \(\symm_n\)-stable.

Now \(B\) is a finitely generated graded \(S\)-module, so
Lemma~\ref{lem: graded-nakayama} (graded Nakayama) implies that \(M=0\). Hence
\[
\sigma(I)\subseteq I.
\]
Applying the same argument to \(\sigma^{-1}\) yields
\[
\sigma(I)=I.
\]

Therefore the \(\symm_n\)-action on \(A\) descends to an action on
\[
A/I \cong H_U^*(Y_{n,\lambda,s}).
\]
By construction, this descended action satisfies
\[
\sigma\cdot x_i=x_{\sigma(i)},
\qquad
\sigma\cdot u_j=u_j,
\]
and the map \(q^*\) is \(\symm_n\)-equivariant.
\end{proof}

\subsection{Family 2 and Schubert variety intersections}

We outline our strategy for proving the second family of relations, following the argument of Abe-Horiguchi \cite{AH}. First, we construct a map $p: Y_{n,\lambda,s} \to \Gr(m,K)$, which factors through $\Fl_{(1^n)}(\CC^K)$. We then show the image $p(Y_{n,\lambda,s})$ is contained a certain Schubert variety $X_{\mu_0}$. The polynomials $e_d(x_1,\dots,x_m| u_{\phi_\lambda(1)},\dots,u_{\phi_\lambda(n)})$ will turn out to be images of certain Schubert classes $X_{m,d}$ under the map $p^*$. A classical result shows the cells $X_{\mu_0}$ and $X_{\mu_{m,d}}$ do not meet, and so $e_d(x_1,\dots,x_m| u_{\phi_\lambda(1)},\dots,u_{\phi_\lambda(n)}) = 0$ in $H^*_U(Y_{n,\lambda,s})$. Applying the $\symm_n$-action from the previous section shows the vanishing of $e_d(x_{i_1},\dots,x_{i_m}| u_{\phi_\lambda(1)},\dots,u_{\phi_\lambda(n)})$ for arbitrary indices $1 \leq i_1 < \dots < i_m \leq n$.

We first recount a few important propositions and lemmata.

\begin{proposition}\label{prop: pullback-schub}
    Let $p: \Fl_K(\CC^K) \to \Gr(m,K)$ be given by $p: V_\bullet \mapsto V_m$. If $\mu$ is a partition that fits in a $m \times (K-m)$ rectangle, then the pullback of the Schubert class $X_\mu(F_\bullet)$ with respect to the standard flag $F_\bullet$ is given by

    \begin{equation}\label{eq: pullback-schub}
        p^*[X_\mu(F_\bullet)] = s_\mu(-\overline{x}_1,\dots,-\overline{x}_m|-t_K,\dots,-t_1,0,\dots)
    \end{equation}
\end{proposition}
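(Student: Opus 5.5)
The approach is to identify $p^*[X_\mu(F_\bullet)]$ as the pullback of a known equivariant Schubert class on $\Gr(m,K)$, and to establish that class identity by torus localization. Throughout, $T=(\CC^\times)^K$ acts diagonally. Let $\mathcal S\subset\CC^K$ denote the tautological rank-$m$ subbundle on $\Gr(m,K)$. Then $p^*\mathcal S=\widetilde V_m$ on $\Fl_K(\CC^K)$, and $\widetilde V_m$ has a filtration with line-bundle quotients $\widetilde V_i/\widetilde V_{i-1}$ for $1\leq i\leq m$. So $p^*$ sends any symmetric polynomial in the equivariant Chern roots of $\mathcal S$ to the same polynomial in $\overline x_1,\dots,\overline x_m$, where $\overline x_i=c_1^T(\widetilde V_i/\widetilde V_{i-1})$. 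Consequently it suffices to prove, on $\Gr(m,K)$,
\[
[X_\mu(F_\bullet)]_T = s_\mu(-y_1,\dots,-y_m\mid -t_K,\dots,-t_1,0,\dots),
\]
where $y_1,\dots,y_m$ are the equivariant Chern roots of $\mathcal S$ and $s_\mu(\,\cdot\mid\cdot\,)$ is the factorial (double) Schur polynomial. Both sides lie in $H^*_T(\Gr(m,K))$, and since the right side is symmetric in the $y_i$ it defines a genuine class there.

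Because $\Gr(m,K)$ has a $T$-stable affine paving, Lemma~\ref{lem: aff-paving-induces-basis} shows it is equivariantly formal. Hence restriction to the finite fixed-point set $\{E_I=\langle e_i: i\in I\rangle : I\in\binom{[K]}{m}\}$ is injective, and I will compare the two sides pointwise. At $E_I$ we have $y_j\mapsto t_{i_j}$, with the sign governed by the character convention $t_i=c_1^T(\CC_{\varepsilon_i})$. To handle the left side I will use the standard characterization of equivariant Schubert classes: $[X_\mu]_T$ is the unique class of degree $2|\mu|$ satisfying two conditions. First, $[X_\mu]_T|_{E_I}=0$ unless $E_I\in X_\mu(F_\bullet)$. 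Second, at the $T$-fixed point $E_{I_\mu}$ of the open cell of $X_\mu$, the restriction equals the product of the $T$-weights of the normal space to $X_\mu$ at $E_{I_\mu}$. This condition is computed from the tangent space $\mathrm{Hom}(E_{I_\mu},\CC^K/E_{I_\mu})$.

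For the right side I will invoke the vanishing and normalization properties of factorial Schur polynomials (Molev--Sagan, Knutson--Tao). Let $\nu$ be the partition attached to $I$. The specialization of $s_\mu(z\mid a)$ at $z=a_\nu$ vanishes unless $\mu\subseteq\nu$, and when $\nu=\mu$ it equals $\prod(a_{\cdot}-a_{\cdot})$ over the boxes of $\mu$, indexed by hook data. After substituting $z_j=-t_{i_j}$ and $a=(-t_K,\dots,-t_1,0,\dots)$, I will check three things. The containment $\mu\subseteq\nu$ matches the incidence conditions $\dim(V\cap F_{K-m+i-\mu_i})\geq i$ defining $X_\mu(F_\bullet)$. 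The reversal $t_K,\dots,t_1$ arises because $F_\bullet=\langle e_1,\dots\rangle$ is stable under the lower-triangular (opposite) Borel. Finally, the product over boxes equals the product of normal weights. Given these checks, the uniqueness statement forces the two classes to agree. As an alternative check, the same formula follows from the Kempf--Laksov determinantal formula for $[X_\mu]$ as a degeneracy locus of $\mathcal S\to\CC^K/F_{K-m+i-\mu_i}$. Its equivariant Chern classes of the quotient $\CC^K/F_j$ have roots $t_{j+1},\dots,t_K$, and the Jacobi--Trudi form of factorial Schur polynomials then matches this.

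I expect the main obstacle to be bookkeeping rather than any conceptual step. One must pin down the sign conventions, namely whether $\overline x_i$ is $c_1$ of the tautological line or of its dual, and whether weights enter as $t$ or $-t$. One must also get the ordering of the shift sequence $(-t_K,\dots,-t_1,0,\dots)$ exactly right, so that the vanishing locus of the factorial Schur polynomial is $X_\mu(F_\bullet)$ and not the opposite Schubert variety. To settle this I will first verify the case $m=1$, $\mu=(1)$ on $\PP^{K-1}$ by direct computation, where $[X_{(1)}]=[\PP(F_{K-1})]$. I will fix all conventions from that case and only then run the general localization argument.
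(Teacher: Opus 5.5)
The paper does not prove this proposition. It recalls it (via \cite{AH}) as the standard fact that equivariant Schubert classes on $\Gr(m,K)$ are represented by factorial Schur polynomials, and then only uses the pullback. Your proposal therefore gives an actual proof by a different, self-contained route. You reduce to $\Gr(m,K)$ via $p^*\mathcal S=\widetilde V_m$ and the Whitney formula. You then determine $[X_\mu(F_\bullet)]_T$ by the Knutson--Tao characterization: a support condition plus normalization at one fixed point, with uniqueness from upper-triangularity against the Schubert basis. Finally you combine this with the vanishing and hook-product properties of factorial Schur polynomials. What this buys is independence from any source's sign and ordering conventions, which is exactly where the risk lies. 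It also proves, with the same bookkeeping, the variant $[X_\mu(w\widetilde F_\bullet)]_T=s_\mu(-y_1,\dots,-y_m\mid -t_{w(1)},\dots,-t_{w(K)},0,\dots)$, which the paper actually invokes in Proposition~\ref{prop: family2}. The bookkeeping does close up:
\begin{itemize}
\item For $I=\{i_1<\cdots<i_m\}$ set $\nu_j=K-m+j-i_j$. Then $E_I\in X_\mu(F_\bullet)$ iff $i_j\le K-m+j-\mu_j$ for all $j$, iff $\mu\subseteq\nu$.
\item The substitution $a_l=-t_{K+1-l}$ sends $a_{\nu_j+m-j+1}$ to $-t_{i_j}$.
\item The hook product $\prod_{(i,j)\in\mu}(a_{\mu_i+m-i+1}-a_{m-\mu'_j+j})$ becomes $\prod(t_b-t_a)$ over $a\in I_\mu$, $b\notin I_\mu$, $b>a$.
\end{itemize}

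One correction. $F_\bullet=\langle e_1,\dots\rangle$ is stable under the upper-triangular Borel $B$, not the lower-triangular one, and $X_\mu(F_\bullet)=\overline{B\cdot E_{I_\mu}}$ with $I_\mu=\{K-m+j-\mu_j\}$. Its tangent weights at $E_{I_\mu}$ are $t_b-t_a$ with $a\in I_\mu$, $b\notin I_\mu$, $b<a$. So the normal weights are those with $b>a$, which is precisely what matches the hook product above. Computing the normal space with the lower-triangular Borel would give the wrong normalization.

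The reversal of the parameters instead reflects that the natural order $(-t_1,\dots,-t_K)$ corresponds to the opposite flag $\widetilde F_\bullet$. For example, $[\PP(F_{K-1})]_T=t_K-\overline{x}_1$, since it is the zero locus of the equivariant section $e_K^*$ of $\mathcal O(1)\otimes\CC_{\varepsilon_K}$. By contrast, $[\PP(\widetilde F_{K-1})]_T=t_1-\overline{x}_1$. Your planned $m=1$ check would catch this, so the argument goes through once the Borel is fixed.
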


\begin{proposition}\label{prop: parabolic-inclusion}\cite{AF}
    Let $\alpha \vDash n$ be a composition of $n$. Let $\pi: \Fl_n(\CC^n) \to \Fl_\alpha(\CC^n)$ be the projection map to the partial flag variety indexed by $\alpha$. We have that

    \begin{equation}\label{eq: parabolic-inclusion}
        H^*_T(\Fl_\alpha(\CC^n)) = H^*_T(\Fl_n(\CC^n))^{\symm_\alpha}
    \end{equation}
    the ring of invariants of $H^*_T(\Fl_n(\CC^n))$ under $\symm_\alpha := \symm_{\alpha_1} \times \dots \times \symm_{\alpha_\ell}$, where $\symm_\alpha$ acts on the $\xxx$-variables.
\end{proposition}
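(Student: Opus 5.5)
We prove the equality of Proposition~\ref{prop: parabolic-inclusion} in two halves. First we identify the abstract ring of invariants $H^*_T(\Fl_n(\CC^n))^{\symm_\alpha}$ with the stated Borel presentation of $H^*_T(\Fl_\alpha(\CC^n))$. Then we check that $\pi^*$ realizes this identification, i.e.\ it is injective with exactly that image. Throughout, $T=(\CC^\times)^n$ acts on $\CC^n$ with weights $t_1,\dots,t_n$. We use the Borel presentation
\[
H^*_T(\Fl_n)\cong \QQ[\xxx_n,\ttt_n]/I,\qquad I=(e_i(\xxx_n)-e_i(\ttt_n):1\leq i\leq n),
\]
where $x_j=c_1^T(\widetilde V_j/\widetilde V_{j-1})$. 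We write $a_r:=\alpha_1+\cdots+\alpha_r$.

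\emph{Step 1 (algebra).} The group $\symm_\alpha$ acts on $\QQ[\xxx_n,\ttt_n]$ through the $\xxx$-variables. Every generator $e_i(\xxx_n)-e_i(\ttt_n)$ of $I$ is fully symmetric in $\xxx$, so $I$ is $\symm_\alpha$-stable and $\symm_\alpha$ acts on $\QQ[\xxx_n,\ttt_n]/I$. We work over $\QQ$, so the Reynolds operator $\rho=\frac{1}{|\symm_\alpha|}\sum_{\sigma\in\symm_\alpha}\sigma$ is available.
\begin{itemize}
\item Taking invariants is exact, hence $(\QQ[\xxx_n,\ttt_n]/I)^{\symm_\alpha}=\QQ[\xxx_n,\ttt_n]^{\symm_\alpha}/I^{\symm_\alpha}$.
\item Since $I$ is generated by invariant elements $g_i$, for $f=\sum_i h_ig_i\in I^{\symm_\alpha}$ we get $f=\rho(f)=\sum_i\rho(h_i)g_i$. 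Hence $I^{\symm_\alpha}=(g_i)\cdot\QQ[\xxx_n,\ttt_n]^{\symm_\alpha}$.
\end{itemize}
Together these show that $H^*_T(\Fl_n)^{\symm_\alpha}$ is exactly the quotient $\QQ[\xxx_n,\ttt_n]^{\symm_\alpha}/(e_i(\xxx_n)-e_i(\ttt_n))$ quoted earlier as $H^*_T(\Fl_\alpha)$.

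\emph{Step 2 (the geometric map).} On $\Fl_\alpha$, let $Q_r$ denote the tautological quotient bundle $\widetilde V_{a_r}/\widetilde V_{a_{r-1}}$. Over $\Fl_n$ the pullback $\pi^*Q_r$ has a filtration with successive line quotients $\widetilde V_j/\widetilde V_{j-1}$ for $a_{r-1}<j\leq a_r$. The Whitney formula then gives
\[
\pi^*c^T(Q_r)=\prod_{a_{r-1}<j\leq a_r}(1+x_j).
\]
So $\pi^*$ sends the generators of the partial-flag Borel presentation (block-elementary symmetric functions in $\xxx$, together with the $t$'s) to the same polynomials. In particular $\operatorname{im}\pi^*\subseteq H^*_T(\Fl_n)^{\symm_\alpha}$, and under the presentations of Step 1 the map $\pi^*$ is the natural map from $\QQ[\xxx_n,\ttt_n]^{\symm_\alpha}/(\dots)$ to $\QQ[\xxx_n,\ttt_n]/I$. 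That natural map is injective by Step 1.

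\emph{Step 3 (injectivity and surjectivity, independent of the cited presentation).} We also want the conclusion without leaning on the cited presentation of $H^*_T(\Fl_\alpha)$. The map $\pi$ is a $T$-equivariant fiber bundle with fiber $\prod_r\Fl_{\alpha_r}(\CC^{\alpha_r})$, whose cohomology is generated by restrictions of the global classes $x_j$. Equivariant Leray--Hirsch therefore makes $H^*_T(\Fl_n)$ a free $H^*_T(\Fl_\alpha)$-module, so $\pi^*$ is injective.
\begin{itemize}
\item Both rings are free over $\QQ[\ttt_n]$ by Lemma~\ref{lem: aff-paving-induces-basis}, using the Schubert pavings.
\item The invariant ring is a $\QQ[\ttt_n]$-module direct summand of $H^*_T(\Fl_n)$, again via $\rho$.
\item Its rank is $\dim H^*(\Fl_n)^{\symm_\alpha}=n!/\prod_r\alpha_r!$, which is the number of Schubert cells of $\Fl_\alpha$.
\end{itemize}
So the injection $\pi^*$ lands in a graded free $\QQ[\ttt_n]$-module of the same graded rank. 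To upgrade this to equality, reduce modulo $(t_1,\dots,t_n)$: there the statement becomes the classical nonequivariant fact $H^*(\Fl_\alpha)=H^*(\Fl_n)^{\symm_\alpha}$. The graded Nakayama argument of Corollary~\ref{cor: graded-nakayama-surj} then gives equality.

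The main obstacle is Step 3, where the nonequivariant input must be matched with the equivariant statement. The point to verify is that reduction modulo $(t_1,\dots,t_n)$ commutes with taking $\symm_\alpha$-invariants. This holds because $\rho$ is $\QQ[\ttt_n]$-linear, so invariants form a direct summand and their formation commutes with base change to $\QQ$.
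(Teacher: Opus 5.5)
The paper gives no proof of this proposition. It is quoted from \cite{AF}, alongside the Borel presentations of $H^*_T(\Fl_n(\CC^n))$ and $H^*_T(\Fl_\alpha(\CC^n))$ recalled in Section~\ref{sec:background}. Your argument is correct and fills in what the citation leaves implicit.

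Steps~1--2 show that, once those two presentations are granted, the proposition is purely formal. The Reynolds operator gives injectivity, via $I\cap\QQ[\xxx_n,\ttt_n]^{\symm_\alpha}=(e_i(\xxx_n)-e_i(\ttt_n))\,\QQ[\xxx_n,\ttt_n]^{\symm_\alpha}$. It also identifies the image, via the surjection $\QQ[\xxx_n,\ttt_n]^{\symm_\alpha}\twoheadrightarrow(\QQ[\xxx_n,\ttt_n]/I)^{\symm_\alpha}$. Your Whitney-formula computation then identifies $\pi^*$ with the natural map between the two presentations.

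Step~3 takes a genuinely different route: it reduces instead to the nonequivariant Borel theorem. It uses freeness over $\QQ[\ttt_n]$, the $\QQ[\ttt_n]$-linearity of the Reynolds operator, and graded Nakayama. This is the same template the paper uses in Lemma~\ref{lem: rel-paving-equiv-surj} and Proposition~\ref{prop:Sn-action-equiv}. What it buys is a proof that does not presuppose the equivariant partial-flag presentation.

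As written, however, Step~3 is not quite independent of that presentation, as you claim. Before applying Nakayama you need the containment $\operatorname{im}\pi^*\subseteq H^*_T(\Fl_n(\CC^n))^{\symm_\alpha}$. You obtained it in Step~2 from the fact that $H^*_T(\Fl_\alpha(\CC^n))$ is generated over $\QQ[\ttt_n]$ by the classes $c_i^T(Q_r)$, and that fact is part of the cited presentation. Nakayama alone cannot supply the containment, because a graded $\QQ[\ttt_n]$-direct summand of a free module is not determined by its reduction modulo $(t_1,\dots,t_n)$.

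The repair is short:
\begin{itemize}
\item The classical isomorphism $H^*(\Fl_\alpha(\CC^n))\cong H^*(\Fl_n(\CC^n))^{\symm_\alpha}$ shows that $H^*(\Fl_\alpha(\CC^n))$ is generated by the $c_i(Q_r)$. This holds because the $\symm_\alpha$-invariants of the coinvariant ring are the image of $\QQ[\xxx_n]^{\symm_\alpha}$, which is generated by block elementary symmetric polynomials.
\item Lemma~\ref{lem: graded-nakayama} then lifts this generation to $H^*_T(\Fl_\alpha(\CC^n))$.
\end{itemize}
With that in place, Step~3 stands on its own. Injectivity plus Nakayama surjectivity give the equality directly, so the rank count becomes unnecessary.
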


\begin{lemma}\label{lem: pullback-fl1nck-schur}
    Let $p_m:\Fl_{(1^n)}(\CC^K)\to \Gr(m,K)$ be defined by
    $p_m(V_\bullet)=V_m$ for $m\leq n$. Then
    \[
    p_m^*[X_\mu(F_\bullet)]
    =s_\mu(-\overline{x}_1,\dots,-\overline{x}_m\mid -t_K,\dots,-t_1,0,\dots).
    \]
\end{lemma}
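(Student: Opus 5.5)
The plan is to factor $p_m$ through the complete flag variety and reduce to Proposition~\ref{prop: pullback-schub}. Let $\pi:\Fl_K(\CC^K)\to \Fl_{(1^n)}(\CC^K)$ be the forgetful projection $V_\bullet\mapsto (V_1\subset\cdots\subset V_n)$. Since $m\leq n$, the composite $p_m\circ\pi$ is exactly the map $p:\Fl_K(\CC^K)\to\Gr(m,K)$, $V_\bullet\mapsto V_m$, of Proposition~\ref{prop: pullback-schub}. All three maps are $T$-equivariant, so
\[
\pi^*\bigl(p_m^*[X_\mu(F_\bullet)]\bigr)=p^*[X_\mu(F_\bullet)]
=s_\mu(-\overline{x}_1,\dots,-\overline{x}_m\mid -t_K,\dots,-t_1,0,\dots)
\]
in $H_T^*(\Fl_K(\CC^K))$.

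Next I would use Proposition~\ref{prop: parabolic-inclusion} with $\alpha=(1^n,K-n)$. It identifies $\pi^*$ with the inclusion of the $\symm_1^{\,n}\times\symm_{K-n}$-invariants into $H_T^*(\Fl_K(\CC^K))$. In particular $\pi^*$ is injective. Under this identification the classes $\overline{x}_1,\dots,\overline{x}_n$ (first Chern classes of $\widetilde V_i/\widetilde V_{i-1}$) on $\Fl_{(1^n)}(\CC^K)$ pull back to the classes of the same name on $\Fl_K(\CC^K)$. This is naturality of Chern classes, since $\pi^*(\widetilde V_i)$ is the $i$th tautological bundle for $i\le n$.

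The right-hand side depends only on $\overline{x}_1,\dots,\overline{x}_m$ with $m\le n$ and on the $t_j$. It is therefore invariant under $\symm_{K-n}$ acting on $x_{n+1},\dots,x_K$, and it is the image under $\pi^*$ of the same expression formed in $H_T^*(\Fl_{(1^n)}(\CC^K))$. Injectivity of $\pi^*$ then gives the claimed identity in $H_T^*(\Fl_{(1^n)}(\CC^K))$.

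The only real point to check is that the equivariant Borel presentation of the partial flag variety is compatible with naming the generators $\overline{x}_i$ as Chern classes of the tautological quotients, so that $\pi^*$ is literally the invariant inclusion on these generators. I expect this is immediate from the construction in \cite{AF}. Injectivity could alternatively be seen by localization, or from $\pi$ being a flag-bundle with fiber $\Fl_{K-n}$, which gives a Leray–Hirsch splitting.
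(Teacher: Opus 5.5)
Your proposal is correct and follows essentially the same route as the paper: apply Proposition~\ref{prop: pullback-schub} on the complete flag variety, note that the resulting class involves only $\overline{x}_1,\dots,\overline{x}_m$, and descend to $\Fl_{(1^n)}(\CC^K)$ via Proposition~\ref{prop: parabolic-inclusion}. The paper simply asserts that the invariant class ``is exactly'' $p_m^*[X_\mu(F_\bullet)]$; your explicit factorization $p=p_m\circ\pi$, together with injectivity of $\pi^*$ and naturality of the tautological Chern classes, is the justification for that step.
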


\begin{proof}
Let $p_K:\Fl(\CC^K)\to \Gr(m,K)$ be defined by $p_K(V_\bullet)=V_m$. By
Proposition~\ref{prop: pullback-schub},
\[
p_K^*[X_\mu(F_\bullet)]
=s_\mu(-\overline{x}_1,\dots,-\overline{x}_m\mid -t_K,\dots,-t_1,0,\dots)
\in H_T^*(\Fl(\CC^K)).
\]
This class depends only on $\overline{x}_1,\dots,\overline{x}_m$, and hence is
invariant under the action of
$\symm_{1^n}\times \symm_{K-n}=\symm_1\times\cdots\times\symm_1\times\symm_{K-n}$
on the $\overline{x}$-variables. Therefore, by
Proposition~\ref{prop: parabolic-inclusion}, it lies in the invariant subring
\[
H_T^*(\Fl(\CC^K))^{\symm_{1^n}\times\symm_{K-n}}
\cong H_T^*(\Fl_{(1^n)}(\CC^K)),
\]
and this class is exactly $p_m^*[X_\mu(F_\bullet)]$.
\end{proof}

The following lemma is a modification of a proposition by Tanisaki \cite{T}.

\begin{lemma}\label{lem: ynls-inclusion-schub}
    Let $p:Y_{n,\lambda,s}\to \Gr(m,K)$ be given by $p(V_\bullet)=V_m$, and let
    $U_\bullet$ be a complete flag refining
    \[
    \cdots\subset N_\Lambda^2\CC^K\subset N_\Lambda\CC^K\subset \CC^K.
    \]
    Then
    \[
        p(Y_{n,\lambda,s})\subset X_{\mu_0}(U_\bullet),
    \]
    where
    \[
    \mu_0=(\underbrace{K-m,\dots,K-m}_{p_m^n(\lambda)},0,\dots,0).
    \]
\end{lemma}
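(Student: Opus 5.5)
The plan is to reduce the Schubert condition to a single containment, $U_{p}\subset V_m$ with $p:=p_m^n(\lambda)$, and to prove that containment from the defining conditions of $Y_{n,\lambda,s}$. First unwind the definition of $X_{\mu_0}(U_\bullet)$ in $\Gr(m,K)$. For $1\leq i\leq p$ we have $(\mu_0)_i=K-m$, so the condition reads $\dim(V\cap U_{K-m+i-(K-m)})=\dim(V\cap U_i)\geq i$. This just says $U_i\subset V$. For $i>p$ we have $(\mu_0)_i=0$, and the condition $\dim(V\cap U_{K-m+i})\geq i$ holds automatically for any $m$-dimensional $V$ by dimension counting in $\CC^K$. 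Hence $X_{\mu_0}(U_\bullet)=\{V\in\Gr(m,K): U_{p}\subset V\}$, and it suffices to show $U_p\subset V_m$ for every $V_\bullet\in Y_{n,\lambda,s}$.

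Next, produce a large subspace inside $V_m$. Use the shifted description $N_\Lambda V_i\subset V_{i-1}$ recalled after the definition of $Y_{n,\lambda,s}$; this is legitimate because we only need a set-theoretic statement about points. Iterating it $n-m$ times gives $N_\Lambda^{n-m}V_n\subset V_m$. Combined with $N_\Lambda^{n-k}\CC^K\subset V_n$, this yields
\[
N_\Lambda^{j}\CC^K\subset V_m,\qquad j:=(n-m)+(n-k).
\]

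Then identify $N_\Lambda^j\CC^K$ with $U_p$. Since $N_\Lambda$ has Jordan type $\Lambda$, we have $\dim N_\Lambda^j\CC^K=\sum_{r=1}^s\max(\Lambda_r-j,0)$. Here $\Lambda_r-j=\lambda_r-(n-m)$, so this sum equals $\sum_r c_{r,m}=p_m^n(\lambda)$, exactly as computed in the proof of Lemma~\ref{lem: ideal-containment}. This uses $p_m^n(\lambda)=\lambda'_n+\cdots+\lambda'_{n-m+1}$, the number of boxes of $\lambda$ in columns $n-m+1,\dots,n$. Because $U_\bullet$ refines the chain $\cdots\subset N_\Lambda^2\CC^K\subset N_\Lambda\CC^K\subset\CC^K$, the subspace $N_\Lambda^j\CC^K$ appears in $U_\bullet$ and is therefore $U_{\dim}=U_p$. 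Thus $U_p\subset V_m$, and the lemma follows.

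The only delicate points are the bookkeeping. One must check that the index $K-m+i-(\mu_0)_i$ collapses correctly, so that the conditions for $i>p$ are genuinely vacuous. One must also confirm the dimension identity $\sum_r\max(\lambda_r-(n-m),0)=p_m^n(\lambda)$, including the case $m<n-\lambda_1$, where both sides vanish and the statement is trivial.
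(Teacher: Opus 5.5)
Your proposal is correct and follows essentially the same route as the paper: iterate the shifted conditions to get $N_\Lambda^{2n-k-m}\CC^K\subset N_\Lambda^{n-m}V_n\subset V_m$, identify this subspace with $U_{p_m^n(\lambda)}$ via a dimension count and the refinement hypothesis, and note that the Schubert conditions for $i>p_m^n(\lambda)$ hold automatically by the dimension estimate. Your explicit count $\dim N_\Lambda^{j}\CC^K=\sum_r\max(\lambda_r-(n-m),0)$ is a bit cleaner than the paper's column description, which says ``weakly to the right of the $(n-m)$th column'' where strictly to the right is meant.
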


\begin{proof}
    Let $V_\bullet\in Y_{n,\lambda,s}$. Using the shifted description of
    $Y_{n,\lambda,s}$, we have
    \[
    N_\Lambda^{n-k}\CC^K\subset V_n,
    \qquad
    N_\Lambda V_i\subset V_{i-1}\ \, (1\leq i\leq n).
    \]
    Hence
    \[
    N_\Lambda^{2n-k-m}\CC^K
    \subset N_\Lambda^{n-m}V_n
    \subset V_m.
    \]
    By construction of $N_\Lambda$, each application of $N_\Lambda$ removes one
    column of the canonical filling $P$. Therefore
    $N_\Lambda^{2n-k-m}\CC^K$ is spanned by the basis vectors corresponding to
    the boxes of $[\lambda]$ weakly to the right of the $(n-m)$th column, and so
    \[
    \dim N_\Lambda^{2n-k-m}\CC^K=p_m^n(\lambda).
    \]
    Since $U_\bullet$ refines the filtration by images of powers of $N_\Lambda$,
    we may choose it so that
    \[
    U_{p_m^n(\lambda)}=N_\Lambda^{2n-k-m}\CC^K.
    \]
    Thus
    \[
    U_{p_m^n(\lambda)}\subset V_m.
    \]
    It follows that for $1\leq i\leq p_m^n(\lambda)$,
    \[
    \dim(V_m\cap U_i)=i.
    \]
    For the remaining indices $p_m^n(\lambda)\leq i\leq m$, we use the dimension
    estimate
    \[
    \dim(V_m\cap U_{K-m+i})
    \geq \dim V_m + \dim U_{K-m+i} - K
    = m + (K-m+i)-K = i.
    \]
    These are exactly the Schubert conditions defining
    $X_{\mu_0}(U_\bullet)$, so $V_m\in X_{\mu_0}(U_\bullet)$.
\end{proof}
This implies that $p$ restricts to a map
$\widehat{p}:Y_{n,\lambda,s}\to X_{\mu_0}(U_\bullet)$, and we obtain a
commutative diagram
\begin{center}
    \begin{tikzcd}
    H_T^*(\Fl_{(1^n)}(\CC^K)) \arrow[d, "{p_{n,\lambda,s}^*}"] &  & {H_T^*(\Gr(m,K))} \arrow[d, "i^*"] \arrow[ll, "p_m^*"] \\
    {H_U^*(Y_{n,\lambda,s})}                                   &  & H_T^*(X_{\mu_0}(U_\bullet)) \arrow[ll, "\widehat{p}^*"]
    \end{tikzcd}
\end{center}
where $i$ is the inclusion $X_{\mu_0}(U_\bullet)\hookrightarrow \Gr(m,K)$.

We briefly recall a standard fact.

\begin{lemma}\label{lem:eq-class-supported-vanishes}
    Let $X$ be a smooth $T$-variety, let $Y \subset X$ be a closed
    $T$-stable subvariety, and let $i:Z\hookrightarrow X$ be the inclusion
    of a closed $T$-stable subvariety. Suppose the class
    \[
        \alpha \in H_T^*(X)
    \]
    is supported on $Y$, i.e. lies in the image of the natural map
    \[
        H_T^*(X,X\setminus Y)\to H_T^*(X).
    \]
    If $Z\cap Y=\varnothing$, then
    \[
        i^*(\alpha)=0 \in H_T^*(Z).
    \]
\end{lemma}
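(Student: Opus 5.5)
The plan is to use the long exact sequence of the pair $(X, X\setminus Y)$ in equivariant cohomology, together with naturality under the inclusion $i$. Write $\alpha=j^*(\beta)$ with $\beta\in H_T^*(X,X\setminus Y)$, where $j^*:H_T^*(X,X\setminus Y)\to H_T^*(X)$ is the natural map. The key observation is that $Z\cap Y=\varnothing$ means $Z\subset X\setminus Y$. Hence $i$ factors as
\[
Z\xrightarrow{\ i'\ } X\setminus Y\xrightarrow{\ k\ } X .
\]
Therefore $i^*(\alpha)=i'^*\,k^*\,j^*(\beta)$.

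The composite $H_T^*(X,X\setminus Y)\xrightarrow{j^*}H_T^*(X)\xrightarrow{k^*}H_T^*(X\setminus Y)$ consists of two consecutive maps in the long exact sequence of the pair. Equivalently, it is induced by the map of pairs $(X\setminus Y,\varnothing)\to(X,\varnothing)\to(X,X\setminus Y)$. This composite factors through $H_T^*(X\setminus Y,X\setminus Y)=0$, so $k^*j^*=0$, and thus $i^*(\alpha)=0$.

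To make this precise in the Borel setting, I would apply the ordinary argument to the pair $(EK\times_K X,\,EK\times_K(X\setminus Y))$. I would also note that $EK\times_K Z\subset EK\times_K(X\setminus Y)$, since the Borel construction preserves the inclusions and the disjointness because $Y$ and $Z$ are $T$-stable. Smoothness of $X$ is not needed for the vanishing itself. It only enters in the application, where it guarantees that the equivariant fundamental class $[X_\mu]_T$ arises from a Thom class in $H_T^*(X,X\setminus X_\mu)$, so that the lemma applies to Schubert classes.

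I expect no real obstacle. The only point needing care is the identification of relative equivariant cohomology with relative cohomology of the Borel pairs, and the verification that the composite of consecutive maps of the long exact sequence vanishes. Both are standard.
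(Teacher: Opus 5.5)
Your proposal is correct and is essentially the paper's argument: both rest on naturality of relative equivariant cohomology under maps of pairs and the vanishing of the cohomology of a space relative to itself. The paper pulls $\alpha$ back directly along the map of pairs $(Z,Z)\to(X,X\setminus Y)$, while you factor $i$ through $X\setminus Y$ and use exactness of the pair sequence (equivalently, factoring through $(X\setminus Y,X\setminus Y)$), which is the same observation.
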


\begin{proof}
    Since $\alpha$ comes from $H_T^*(X,X\setminus Y)$, its pullback
    $i^*(\alpha)$ comes from
    \[
        H_T^*(Z,Z\setminus (Z\cap Y)).
    \]
    If $Z\cap Y=\varnothing$, then this relative group is
    $H_T^*(Z,Z)=0$, so $i^*(\alpha)=0$.
\end{proof}

\begin{lemma}\label{lem: musd-vanishing}
    Let
    \[
    \mu_{m,d}:=(\underbrace{1,\dots,1}_{d},0,\dots,0),
    \]
    where there are $d$ copies of $1$ and $m-d$ copies of $0$. If
    $d>m-p_m^n(\lambda)$, then
    \[
    i^*[X_{\mu_{m,d}}(w_U\widetilde{F}_\bullet)]_T=0.
    \]
    Consequently,
    \[
    p^*[X_{\mu_{m,d}}(w_U\widetilde{F}_\bullet)]_T=0.
    \]
\end{lemma}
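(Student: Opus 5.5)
The plan is to show that the Schubert varieties $X_{\mu_0}(U_\bullet)$ and $X_{\mu_{m,d}}(w_U\widetilde F_\bullet)$ in $\Gr(m,K)$ are disjoint. Then Lemma~\ref{lem:eq-class-supported-vanishes} gives the vanishing, applied with $X=\Gr(m,K)$, $Y=X_{\mu_{m,d}}(w_U\widetilde F_\bullet)$ and $Z=X_{\mu_0}(U_\bullet)$. The torus used is $T=(\CC^\times)^K$; both Schubert varieties are $T$-stable because both reference flags are coordinate flags. The equivariant fundamental class $[Y]_T$ of a closed $T$-stable subvariety of the smooth variety $\Gr(m,K)$ comes from $H_T^*(\Gr(m,K),\Gr(m,K)\setminus Y)$ (Thom/Gysin construction), so it is supported on $Y$. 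The second assertion then follows from the commutative diagram preceding the statement: $p^*=\widehat p^*\circ i^*$, interpreted through the restriction $H_T^*\to H_U^*$.

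For disjointness, I would translate everything to the standard pair $F_\bullet,\widetilde F_\bullet$ and apply Lemma~\ref{lem: grass-intersect}. Since $U_\bullet=w_UF_\bullet$, acting by the permutation matrix $w_U^{-1}$ (an automorphism of $\Gr(m,K)$) sends the pair $\bigl(X_{\mu_0}(U_\bullet),X_{\mu_{m,d}}(w_U\widetilde F_\bullet)\bigr)$ to $\bigl(X_{\mu_0}(F_\bullet),X_{\mu_{m,d}}(\widetilde F_\bullet)\bigr)$, and disjointness is preserved. By Lemma~\ref{lem: grass-intersect}, the intersection is empty unless $\mu_0\subset\mu_{m,d}^*$. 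Here
\[
\mu_{m,d}^*=(\underbrace{K-m,\dots,K-m}_{m-d},\underbrace{K-m-1,\dots,K-m-1}_{d}).
\]
The first $p_m^n(\lambda)$ parts of $\mu_0$ equal $K-m$, and $d>m-p_m^n(\lambda)$ gives $p_m^n(\lambda)>m-d$. So row $m-d+1\le p_m^n(\lambda)$ of $\mu_0$ has length $K-m$, while that row of $\mu_{m,d}^*$ has length $K-m-1$. Hence $\mu_0\not\subset\mu_{m,d}^*$, and the varieties are disjoint. There is an edge case $d>m$, where $\mu_{m,d}$ is not defined as a partition in the rectangle. In that case the relation is vacuous or the class should be read as $0$, and I would remark on this separately. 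I would also check that $K-m\ge1$, so the rows genuinely differ; if $K=m$, the Grassmannian is a point and $p_m^n(\lambda)\le m$ forces the statement to be degenerate.

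For the consequence, $i^*[X_{\mu_{m,d}}]_T=0$ in $H_T^*(X_{\mu_0}(U_\bullet))$. Since $p$ factors as $i\circ\widehat p$ (Lemma~\ref{lem: ynls-inclusion-schub}), $p^*[X_{\mu_{m,d}}]_T=\widehat p^*i^*[X_{\mu_{m,d}}]_T=0$, after composing with the restriction from $T$- to $U$-equivariant cohomology.

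I expect the main obstacle to be the bookkeeping of reference flags and conventions. I need to confirm that Lemma~\ref{lem: grass-intersect}, stated for $F_\bullet,\widetilde F_\bullet$, transports correctly under $w_U$: that is, that $w_U\widetilde F_\bullet$ is exactly the flag opposite to $U_\bullet=w_UF_\bullet$. This holds because $w_U$ permutes coordinates and $\widetilde F_\bullet$ is opposite to $F_\bullet$. I also need to verify that the complement convention $\mu^*$ matches the index ordering I used above.
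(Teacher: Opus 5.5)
Your proposal is correct and follows essentially the same route as the paper. You prove disjointness of $X_{\mu_0}(U_\bullet)$ and $X_{\mu_{m,d}}(w_U\widetilde F_\bullet)$ via Lemma~\ref{lem: grass-intersect}, where your check $\mu_0\not\subset\mu_{m,d}^*$ is equivalent to the paper's $\mu_{m,d}\not\subset\mu_0^*$, then apply Lemma~\ref{lem:eq-class-supported-vanishes} and pull back along $\widehat p$. Your extra bookkeeping fills in details the paper leaves implicit: translating by $w_U^{-1}$, noting that fundamental classes are supported on their subvarieties, and flagging the degenerate cases $d>m$ and $K=m$.
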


\begin{proof}
    By Lemma~\ref{lem: grass-intersect}, if $d>m-p_m^n(\lambda)$, then
    $\mu_{m,d}\not\subset \mu_0^*$, and hence
    \[
    X_{\mu_0}(U_\bullet)\cap X_{\mu_{m,d}}(w_U\widetilde{F}_\bullet)=\varnothing.
    \]
    Therefore by Lemma \ref{lem:eq-class-supported-vanishes} the equivariant fundamental class of
    $X_{\mu_{m,d}}(w_U\widetilde{F}_\bullet)$ restricts trivially to
    $X_{\mu_0}(U_\bullet)$, i.e.
    \[
    i^*[X_{\mu_{m,d}}(w_U\widetilde{F}_\bullet)]_T=0.
    \]
    Pulling back along $\widehat{p}$ gives the second statement.
\end{proof}

\begin{lemma}\label{lem: eq-class-musd}\cite{AH}
    Let $\mu_{m,d}$ be as above. Then
    \begin{equation}\label{eq: schur-musd}
        s_{\mu_{m,d}}(x_1,\dots,x_m\mid a_1,a_2,\dots)
        =\sum_{r=0}^{d} (-1)^{d-r}
        e_r(x_1,\dots,x_m)h_{d-r}(a_1,\dots,a_{m+1-d}).
    \end{equation}
\end{lemma}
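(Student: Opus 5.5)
We must prove that the factorial Schur function $s_{\mu}(x_1,\dots,x_m\mid a_1,a_2,\dots)$ for the column shape $\mu=\mu_{m,d}=(1^d)$ equals the alternating sum of $e_r$'s times $h_{d-r}$'s. Equivalently, it should be the coefficient of $q^d$ in
\[
\frac{(1+x_1q)\cdots(1+x_mq)}{(1+a_1q)\cdots(1+a_{m+1-d}q)}.
\]
The plan is to start from a determinantal or tableau formula for factorial Schur functions and reduce it to a one-variable generating function identity.

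The most efficient route is the factorial Jacobi--Trudi (dual) formula. For a column shape $(1^d)$, the conjugate is the single row $(d)$, so the dual Jacobi--Trudi determinant is $1\times 1$. In Macdonald's conventions this gives $s_{(1^d)}(x\mid a)=e_d(x\mid \tau^{c}a)$ for an appropriate shift $\tau^c$ of the $a$-sequence. The factorial elementary function $e_d(x_1,\dots,x_m\mid a)$ has the generating function
\[
\sum_d e_d(x\mid a)\,q^d=\prod_i(1+x_iq)\Big/\prod_j(1+a_jq),
\]
truncated to the relevant $a$'s. Expanding $1/\prod_j(1+a_jq)=\sum_k(-1)^k h_k(a)q^k$ and extracting the coefficient of $q^d$ yields exactly the right-hand side of the claimed formula. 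The only remaining check is that the set of $a$'s involved is $a_1,\dots,a_{m+1-d}$.

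To avoid relying on conventions, I would instead argue directly from the tableau definition. There, $s_{(1^d)}(x\mid a)$ is the sum over strictly increasing fillings $1\le i_1<\cdots<i_d\le m$ of the column of
\[
\prod_{j=1}^{d}\bigl(x_{i_j}-a_{i_j+c(j)}\bigr),
\]
where the content of the box in row $j$ of the column is $c(j)=1-j$. Thus the $j$th factor is $x_{i_j}-a_{i_j-j+1}$, and the index $i_j-j+1$ ranges over $1,\dots,m-d+1$.

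I would then prove the identity by induction on $m$. Split the sum according to whether $i_d=m$. If $i_d<m$, the terms give $s_{(1^d)}(x_1,\dots,x_{m-1}\mid a)$. If $i_d=m$, the terms give $s_{(1^{d-1})}(x_1,\dots,x_{m-1}\mid a)$ multiplied by $(x_m-a_{m-d+1})$. On the other side, let $E_d^{(m)}$ denote the $q^d$-coefficient of the rational function above. Multiplying the numerator by $(1+x_mq)$ and the denominator by $(1+a_{m+1-d}q)$ yields the same two-term recursion, because $\frac{1+x_mq}{1+a q}=1+(x_m-a)q\cdot\frac{1}{1+aq}$. The base cases $d=0$ and $d>m$ are immediate.

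The main obstacle is this bookkeeping. The denominator's range $m+1-d$ depends on $d$, so the recursion mixes generating functions with different truncations, and one has to check carefully that the shifted range $a_1,\dots,a_{m-d}$ versus $a_1,\dots,a_{m-d+1}$ lines up with the factor $(x_m-a_{m-d+1})$. If this becomes messy, the fallback is to cite \cite{AH} and Macdonald's factorial $e$-generating function directly.
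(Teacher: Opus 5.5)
Your proposal is correct. The paper gives no argument for this lemma; it is simply quoted from \cite{AH}. So your tableau-plus-recursion proof is a self-contained replacement for that citation, not a variant of an argument in the text.

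The bookkeeping you flagged closes up exactly. Set $P_m(q)=\prod_{i\le m}(1+x_iq)$ and $A_k(q)=\prod_{j\le k}(1+a_jq)$. Your identity for $(1+x_mq)/(1+a_{m-d+1}q)$ then gives
\[
\frac{P_m}{A_{m-d+1}}=\frac{P_{m-1}}{A_{m-d}}+(x_m-a_{m-d+1})\,q\,\frac{P_{m-1}}{A_{m-d+1}}.
\]
The two truncations on the right are precisely the ones attached to $(m-1,d)$ and $(m-1,d-1)$. Extracting $[q^d]$ therefore reproduces your split of the tableau sum into $i_d<m$ and $i_d=m$. The edge case $d=m$ uses $A_0=1$ and $[q^m]P_{m-1}=0$.

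What your route buys over the bare citation is that it fixes the convention. The identity holds for $s_\lambda(x\mid a)=\sum_T\prod_{\alpha}(x_{T(\alpha)}-a_{T(\alpha)+c(\alpha)})$, with $c(\alpha)$ equal to column minus row. Under the opposite sign convention $x_{T(\alpha)}+a_{T(\alpha)+c(\alpha)}$, the same computation gives $\sum_r e_r(x)h_{d-r}(a_1,\dots,a_{m+1-d})$, with no alternating sign. This matters because Proposition~\ref{prop: family2} applies the lemma at $(-\overline{x}\mid -t)$ to extract the factor $(-1)^d$. It also identifies the lemma with the generating function $\prod(1+x_iq)/\prod(1+a_jq)$ used in Lemma~\ref{lem: ideal-containment}.

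Your first, Jacobi--Trudi sketch is not a proof as written. For a one-column shape the dual determinant is $1\times 1$ and just returns the factorial elementary function itself. The $d$-dependent range $a_1,\dots,a_{m+1-d}$ is the whole content of the lemma, and that step does not supply it. The tableau induction should be your main line.
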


\begin{proposition}\label{prop: family2}
    For $d>m-p_m^n(\lambda)$, we have
    \[
    p^*[X_{\mu_{m,d}}(w_U\widetilde{F}_\bullet)]_T
    =(-1)^d e_d(x_1,\dots,x_m\mid u_{\phi_\lambda(1)},\dots,u_{\phi_\lambda(n)}).
    \]
    Consequently,
    \[
    e_d(x_{i_1},\dots,x_{i_m}\mid u_{\phi_\lambda(1)},\dots,u_{\phi_\lambda(n)})=0
    \]
    in $H_U^*(Y_{n,\lambda,s})$ for every choice of indices
    $1\leq i_1<\cdots<i_m\leq n$.
\end{proposition}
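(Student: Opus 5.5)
Pull the Grassmannian Schubert class back to $H_U^*(Y_{n,\lambda,s})$ in three stages: to the flag variety, then from $T$ to $U$, then to $Y_{n,\lambda,s}$.

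\emph{Step 1: translate the reference flag.} Since $w_U\widetilde F_\bullet$ is the $w_U$-translate of the opposite flag, $[X_{\mu}(w_U\widetilde F_\bullet)]_T$ is obtained from $[X_\mu(\widetilde F_\bullet)]_T$ by permuting the equivariant parameters $t_j \mapsto t_{w_U(j)}$. The opposite flag $\widetilde F_\bullet$ is the $w_0$-translate of $F_\bullet$, so combining Lemma~\ref{lem: pullback-fl1nck-schur} with this relabeling gives
\[
p_m^*[X_{\mu}(w_U\widetilde F_\bullet)]_T = s_\mu(-\overline x_1,\dots,-\overline x_m\mid -t_{w_U(1)},-t_{w_U(2)},\dots).
\]
Here the reversal $t_K,\dots,t_1$ is undone by $w_0$. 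I expect this step to be the main obstacle: one must check that the indexing conventions of Proposition~\ref{prop: pullback-schub} under the flag change really produce the parameters in the order $w_U(1),w_U(2),\dots$ with the right signs. If the direct relabeling is unclear, I would instead verify the formula by localization at the $T$-fixed points, using the vanishing of a Schubert class at fixed points outside its Schubert variety.

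\emph{Step 2: restrict to $U$ and to $Y_{n,\lambda,s}$.} Restricting from $T$ to $U$ sends $t_j\mapsto u_{\xi(j)}$, so $t_{w_U(r)}\mapsto u_{\phi_\lambda(r)}$ by Definition~\ref{def: phi-lambda}. Pulling back along $p_{n,\lambda,s}^*$ sends $\overline x_i\mapsto x_i$, and the commutative diagram identifies the result with $p^*$. Now apply Lemma~\ref{lem: eq-class-musd} with $\mu=\mu_{m,d}$. Only the parameters $a_1,\dots,a_{m+1-d}$ appear, and since $m+1-d\le p_m^n(\lambda)\le m\le n$, they are among $u_{\phi_\lambda(1)},\dots,u_{\phi_\lambda(n)}$. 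The expression is homogeneous of degree $d$ jointly in $(x,a)$, so negating all variables contributes a factor $(-1)^d$. This gives
\[
p^*[X_{\mu_{m,d}}(w_U\widetilde F_\bullet)]_T=(-1)^d e_d(x_1,\dots,x_m\mid u_{\phi_\lambda(1)},\dots).
\]

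\emph{Step 3: vanishing and arbitrary indices.} By Lemma~\ref{lem: musd-vanishing}, the left side is $0$ when $d>m-p_m^n(\lambda)$, so $e_d(x_1,\dots,x_m\mid u_{\phi_\lambda(\cdot)})=0$. For general indices $i_1<\dots<i_m$, choose $\sigma\in\symm_n$ with $\sigma(r)=i_r$ for $r\le m$. The action of Proposition~\ref{prop:Sn-action-equiv} is by ring automorphisms fixing every $u_j$, so applying $\sigma$ to the vanishing relation gives $e_d(x_{i_1},\dots,x_{i_m}\mid u_{\phi_\lambda(1)},\dots,u_{\phi_\lambda(n)})=0$. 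This uses that the double Tanisaki polynomial is symmetric in its $x$-arguments and that its parameters are untouched by $\sigma$.
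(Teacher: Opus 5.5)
Your proposal is correct and follows essentially the same route as the paper. You pull the Grassmannian class back through $p_m^*$ and $p_{n,\lambda,s}^*$, use $t_j\mapsto u_{\xi(j)}$ and Lemma~\ref{lem: eq-class-musd} to identify it with $(-1)^d e_d(x_1,\dots,x_m\mid u_{\phi_\lambda(\cdot)})$, get vanishing from Lemma~\ref{lem: musd-vanishing}, and pass to arbitrary index sets with the $\symm_n$-action. Your relabeling argument via $w_U\widetilde F_\bullet=w_Uw_0F_\bullet$ (which does give the parameter order $t_{w_U(1)},\dots,t_{w_U(K)}$) and the homogeneity argument for the sign $(-1)^d$ fill in steps the paper only asserts.
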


\begin{proof}
    By choice of $w_U$, the flag $U_\bullet=w_UF_\bullet$ refines the filtration
    by powers of $N_\Lambda$. By Lemmas~\ref{lem: musd-vanishing},
    \ref{lem: pullback-fl1nck-schur}, and \ref{lem: eq-class-musd}, we compute
    \begin{align*}
        0
        &=p^*[X_{\mu_{m,d}}(w_U\widetilde{F}_\bullet)]_T \\
        &=p_{n,\lambda,s}^*\,p_m^*[X_{\mu_{m,d}}(w_U\widetilde{F}_\bullet)]_T \\
        &=p_{n,\lambda,s}^*\,s_{\mu_{m,d}}(-\overline{x}_1,\dots,-\overline{x}_m
        \mid -t_{w_U(1)},\dots,-t_{w_U(K)},0,\dots) \\
        &=(-1)^d\,p_{n,\lambda,s}^*\,e_d(\overline{x}_1,\dots,\overline{x}_m
        \mid t_{w_U(1)},\dots,t_{w_U(m+1-d)}) \\
        &=(-1)^d e_d(x_1,\dots,x_m\mid u_{\xi(w_U(1))},\dots,u_{\xi(w_U(m+1-d))}) \\
        &=(-1)^d e_d(x_1,\dots,x_m\mid u_{\phi_\lambda(1)},\dots,u_{\phi_\lambda(m+1-d)}).
    \end{align*}
    Since the factorial elementary symmetric polynomial only depends on the
    first $m+1-d$ auxiliary parameters, this is exactly
    \[
    (-1)^d e_d(x_1,\dots,x_m\mid u_{\phi_\lambda(1)},\dots,u_{\phi_\lambda(n)}).
    \]

    This proves the relation for the first $m$ variables. For an arbitrary
    $m$-subset $\{i_1<\cdots<i_m\}\subset [n]$, choose
    $\sigma\in \symm_n$ with $\sigma(\{1,\dots,m\})=\{i_1,\dots,i_m\}$. Since
    the $\symm_n$-action of Proposition~\ref{prop:Sn-action-equiv} fixes the
    $u$-variables and permutes the $x$-variables, applying $\sigma$ to the above
    vanishing relation gives
    \[
    e_d(x_{i_1},\dots,x_{i_m}\mid u_{\phi_\lambda(1)},\dots,u_{\phi_\lambda(n)})=0.
    \]
\end{proof}

\begin{corollary}[Theorem~\ref{thm: main}]
    We have the ring isomorphism
    \begin{equation}
        H^*_U(Y_{n,\lambda,s}) \cong \QQ[\xxx_n,\uuu_s]/J_{n,\lambda,s}
    \end{equation}
\end{corollary}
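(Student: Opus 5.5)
The plan is to assemble the pieces already established. First I will build a ring homomorphism
\[
\Psi:\QQ[\xxx_n,\uuu_s]\longrightarrow H^*_U(Y_{n,\lambda,s}),
\]
sending $x_i\mapsto c_1^U(\ell_i)$ and $u_j\mapsto u_j$. By Lemma~\ref{lem: vars-generate} this map is surjective. Lemma~\ref{lem: proj-bundle-surjection} shows that the family 1 generators $(x_i-u_1)\cdots(x_i-u_s)$ lie in $\ker\Psi$. Proposition~\ref{prop: family2} shows that every family 2 generator $e_d(x_{i_1},\dots,x_{i_m}\mid u_{\phi_\lambda(1)},\dots,u_{\phi_\lambda(n)})$ with $d>m-p^n_m(\lambda)$ also lies in $\ker\Psi$. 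Since $J_{n,\lambda,s}$ is generated by exactly these two families, $\Psi$ descends to a surjective graded $\QQ[\uuu_s]$-algebra map
\[
\overline{\Psi}:\QQ[\xxx_n,\uuu_s]/J_{n,\lambda,s}\twoheadrightarrow H^*_U(Y_{n,\lambda,s}).
\]

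Second, I will check that $J_{n,\lambda,s}$ coincides with the ideal $J^\QQ_{n,\lambda,s}$ of Definition~\ref{def: ideal-jqnls}; the generators are listed identically. Proposition~\ref{prop: module-dim} then says the source of $\overline{\Psi}$ is a free $\QQ[\uuu_s]$-module with basis $\AAA_{n,\lambda,s}$, hence of rank $|\AAA_{n,\lambda,s}|$. By Corollary~\ref{cor: moduledim2}, the target is a free $\QQ[\uuu_s]$-module of the same rank $|\AAA_{n,\lambda,s}|$.

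Third, I will conclude that $\overline{\Psi}$ is injective. A surjective homomorphism between free modules of the same finite rank over the commutative ring $\QQ[\uuu_s]$ is an isomorphism: a surjection $R^N\to R^N$ splits, so its kernel $\kappa$ satisfies $R^N\cong R^N\oplus\kappa$. Tensoring with the fraction field $\QQ(\uuu_s)$ forces $\kappa\otimes\QQ(\uuu_s)=0$, and since $\kappa$ is a submodule of a free module it is torsion-free, so $\kappa=0$. Alternatively, one can quote the standard fact that surjective endomorphisms of finitely generated modules are injective (Vasconcelos). In either form, $\overline{\Psi}$ is a ring isomorphism, which is the main theorem.

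The main obstacle is bookkeeping rather than a new idea, and it sits in making sure the ranks genuinely match. Concretely, I need that $H^*_U(Y_{n,\lambda,s})$ is finitely generated and free, which holds because equivariant formality comes from the $U$-stable paving, and that $|\AAA_{n,\lambda,s}|$ equals the number of admissible $w$, which is Lemma~\ref{lem: wsandAnls}.

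I should also confirm that the $x_i$ appearing in Proposition~\ref{prop: family2}, obtained by pulling back from $\Fl_{(1^n)}(\CC^K)$ along $T\to U$ restriction, agree with the tautological Chern classes used in Lemma~\ref{lem: proj-bundle-surjection}. Both are $c_1^U$ of the line bundles $V_i/V_{i-1}$, so $\Psi$ is well defined with a single meaning of $x_i$. With these identifications in place, the argument closes.
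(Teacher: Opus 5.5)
Your proposal is correct and follows the paper's proof. In both, the generators of $J_{n,\lambda,s}$ vanish by Lemma~\ref{lem: proj-bundle-surjection} and Proposition~\ref{prop: family2}, surjectivity comes from Lemma~\ref{lem: vars-generate}, and injectivity follows because both sides are free $\QQ[\uuu_s]$-modules of rank $|\AAA_{n,\lambda,s}|$ by Proposition~\ref{prop: module-dim} and Corollary~\ref{cor: moduledim2}. Your splitting and fraction-field argument (or the appeal to Vasconcelos) spells out the standard fact, which the paper leaves implicit, that a surjection between free modules of the same finite rank over a domain is an isomorphism.
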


\begin{proof}
    By lemma \ref{lem: proj-bundle-surjection} and proposition \ref{prop: family2}, we have that the generators of the ideal $J_{n,\lambda,s}$ vanish in $H_U^*(Y_{n,\lambda,s})$, so the map $p: \QQ[\xxx_n,\uuu_s]/J_{n,\lambda,s} \to H^*_U(Y_{n,\lambda,s})$ is well defined. By lemma \ref{lem: vars-generate}, the map is surjective. By proposition \ref{prop: module-dim} and corollary \ref{cor: moduledim2}, we have that $H^*_U(Y_{n,\lambda,s})$ and $\QQ[\xxx_n,\uuu_s]/J_{n,\lambda,s}$ are free $\QQ[\uuu_s]$-modules of the same rank, and so $p$ is an isomorphism and the claim is proved.
\end{proof}

\section{Some Remarks on Localization}

We record here a localization statement for the $U$-action on
$Y_{n,\lambda,s}$. Since $Y_{n,\lambda,s}$ admits a $U$-stable affine paving,
its equivariant cohomology is a free $H_U^*(pt)$-module. Therefore the
Chang-Skjelbred sequence \cite{CS} gives an injection
\[
H_U^*(Y_{n,\lambda,s}) \hookrightarrow H_U^*(Y_{n,\lambda,s}^U).
\]
Thus the ring $H_U^*(Y_{n,\lambda,s})$ may be viewed as a subring of the
equivariant cohomology of the fixed locus.

At present, however, we do not have a satisfactory combinatorial description of
the image of this restriction map. Unlike the classical GKM setting, the
$U$-fixed locus is not expected to be finite: the torus $U$ acts with repeated
characters along the rows of $\Lambda$, so localization does not reduce to a
finite direct sum of copies of $\QQ[\uuu_s]$. A more precise description would
require understanding the geometry of the fixed components
$Y_{n,\lambda,s}^U$ together with the closures of the one-dimensional
$U$-orbits. We leave such a non-isolated GKM description for future work.

There is also a useful spectrum-theoretic viewpoint due to
Goresky--MacPherson \cite{GM}. Roughly speaking, for many equivariantly formal torus
actions, the affine scheme $\Spec H_T^*(X;\CC)$ is supported on an arrangement
of linear subspaces of $H_T^2(X;\CC)$, with one component associated to each
connected component of the fixed locus. When $H^*(X;\CC)$ is generated in
degrees $0$ and $2$, the fiber of this arrangement over $0$ recovers the
ordinary cohomology ring, and generic fibers often control associated-graded
descriptions of $H^*(X;\CC)$. In our setting, this applies as follows:

\begin{remark}[Spectrum viewpoint]
By Theorem~\ref{thm: main} and Proposition~\ref{prop: module-dim}, we have
\[
\mathrm{Spec}\, H_U^*(Y_{n,\lambda,s}) \cong X_{n,\lambda,s}\subset \mathbb A^{n+s}.
\]
Thus the torus-equivariant cohomology ring of the $\Delta$-Springer fiber is
realized as the coordinate ring of the universal arrangement
$X_{n,\lambda,s}$.

Since $H_U^*(Y_{n,\lambda,s})$ is a free $\QQ[\uuu_s]$-module, the projection
\[
X_{n,\lambda,s}\longrightarrow \mathbb A^s
\]
to the parameter space is finite and flat. The special fiber over
$\uuu_s=\mathbf 0$ recovers the ordinary cohomology ring
$H^*(Y_{n,\lambda,s})$, while fibers over tuples of distinct parameters recover
the finite loci $X_{n,\lambda,s}(\aalpha_s)$ appearing in Griffin's
orbit-harmonics construction. In this sense, our presentation may be viewed as a
concrete instance of the spectrum-theoretic viewpoint of Goresky and
MacPherson.
\end{remark}

\section{Future directions}

The results of this paper suggest several natural problems.

\begin{question}
Describe the fixed locus $Y_{n,\lambda,s}^U$ and the corresponding
one-dimensional $U$-orbit closures. Is there a natural combinatorial model for
these fixed components in terms of the tableau $P$ or the partition data
$(n,\lambda,s)$?
\end{question}

\begin{question}
Does the $U$-action on $Y_{n,\lambda,s}$ admit a non-isolated GKM description?
More precisely, can one recover $H_U^*(Y_{n,\lambda,s})$ from the equivariant
cohomology of the connected components of $Y_{n,\lambda,s}^U$ together with
compatibility conditions coming from codimension-one subtori?
\end{question}

\begin{question}
Theorem~\ref{thm: main} identifies $\Spec H_U^*(Y_{n,\lambda,s})$ with the
universal arrangement $X_{n,\lambda,s}$. Can one make the spectrum-theoretic
connection with orbit harmonics more precise by comparing the generic fibers of
\[
X_{n,\lambda,s}\to \mathbb A^s
\]
with the filtrations appearing in the sense of Goresky-MacPherson?
\end{question}

\begin{question}
Our construction gives an action of $\symm_n$ on $H_U^*(Y_{n,\lambda,s})$.
Can this action be interpreted directly on the arrangement
$X_{n,\lambda,s}\to \mathbb A^s$? More generally, is there a natural
characterization of the automorphisms of $\Spec H_U^*(Y_{n,\lambda,s})$ that
preserve the projection to $\Spec H_U^*(pt)$?
\end{question}

\bibliographystyle{plain}
\bibliography{refs}

\end{document}